\documentclass[12pt,svgnames]{article}
\usepackage{amsmath}
\usepackage{amssymb}
\usepackage{amsthm}
\usepackage{wasysym}
\usepackage{enumerate}
\usepackage{dsfont}
\usepackage{stmaryrd}
\usepackage{comment}
\usepackage{tikz}
\usepackage{tikz-cd}
\usepackage{extarrows}
\usepackage{mathrsfs}
\usepackage{enumitem}
\usepackage{fullpage}
\usepackage{hyperref}
\usepackage{accents}
\usepackage{titlesec}
\usepackage[all]{xy}
\hypersetup{
    linkbordercolor = {PaleTurquoise},
    citebordercolor = {PaleGreen},
}

\titleformat{\section}{\normalsize\bfseries}{\thesection}{1em}{}
\titleformat{\subsection}{\normalsize\bfseries}{\thesubsection}{1em}{}

\numberwithin{equation}{subsection}

\theoremstyle{definition}

\newtheorem{para}[subsection]{}
\newtheorem{ex}[subsection]{Example}
\newtheorem{rmk}[subsection]{Remark}

\newtheorem*{assumption*}{Assumption}

\newtheorem{defn_sub}[subsubsection]{Definition}
\newtheorem{para_sub}[subsubsection]{}

\newtheorem{rmk_sub}[subsubsection]{Remark}

\theoremstyle{plain}

\newtheorem{prop}[subsection]{Proposition}
\newtheorem{theo}[subsection]{Theorem}

\newtheorem*{claim*}{Claim}
\newtheorem*{just*}{Justification}
\newtheorem*{lem*}{Lemma}
\newtheorem*{prop*}{Proposition}

\newtheorem{prop_sub}[subsubsection]{Proposition}
\newtheorem{theo_sub}[subsubsection]{Theorem}
\newtheorem{lem_sub}[subsubsection]{Lemma}
\newtheorem{cor_sub}[subsubsection]{Corollary}

\newcommand{\J}{\mathscr{J}}
\newcommand{\V}{\mathscr{V}}

\newcommand{\ob}{\mathop{\mathsf{ob}}}

\newcommand{\tensor}{\otimes}
\newcommand{\Mnd}{\mathsf{Mnd}}

\newcommand{\Cat}{\text{-}\mathsf{Cat}}
\newcommand{\id}{\mathsf{id}}

\newcommand{\underJ}{\kern -0.5ex \mathscr{J}}

\newcommand{\Set}{\mathsf{Set}}
\newcommand{\ev}{\mathsf{ev}}
\newcommand{\op}{\mathsf{op}}
\newcommand{\Th}{\mathsf{Th}}
\newcommand{\T}{\mathbb{T}}
\newcommand{\C}{\mathscr{C}}

\newcommand{\A}{\mathscr{A}}

\newcommand{\Mod}{\text{-}\mathsf{Mod}}

\newcommand{\y}{\mathsf{y}}
\newcommand{\scrS}{\mathscr{S}}

\newcommand{\Alg}{\text{-}\mathsf{Alg}}

\newcommand{\N}{\mathbb{N}}

\newcommand{\Term}{\mathsf{Term}}

\newcommand{\scrT}{\mathscr{T}}

\newcommand{\X}{\mathscr{X}}

\newcommand{\E}{\mathcal{E}}

\newcommand{\bbC}{\mathbb{C}}

\newcommand{\calE}{\mathcal{E}}

\newcommand{\Var}{\mathsf{Var}}

\newcommand{\calT}{\mathcal{T}}

\newcommand{\Psh}{\text{-}\mathsf{Psh}}

\newcommand{\sfE}{\mathsf{E}}

\newcommand{\calS}{\mathcal{S}}

\newcommand{\Sbar}{\overline{S}}
\newcommand{\Indisc}{\mathop{\mathsf{Indisc}}}
\newcommand{\Disc}{\mathop{\mathsf{Disc}}}
\newcommand{\congr}{\left(\sim_S\right)_{S \in \calS}}

\newcommand{\calB}{\mathcal{B}}
\newcommand{\calC}{\mathcal{C}}

\newcommand{\CGTop}{\mathsf{CGTop}}
\newcommand{\Fib}{\mathsf{Fib}}

\newcommand{\calJ}{\mathcal{J}}
\newcommand{\calP}{\mathcal{P}}
\newcommand{\arity}{\mathsf{ar}}

\newcommand{\scrE}{\mathscr{E}}
\newcommand{\Top}{\mathsf{Top}}
\newcommand{\Meas}{\mathsf{Meas}}
\newcommand{\calO}{\mathcal{O}}
\newcommand{\Preord}{\mathsf{Preord}}
\newcommand{\PMet}{\mathsf{PMet}}

\newcommand{\Simp}{\mathsf{Simp}}
\newcommand{\Born}{\mathsf{Born}}
\newcommand{\calQ}{\mathcal{Q}}
\newcommand{\QuasiSp}{\mathsf{QuasiSp}}

\newcommand{\sfe}{\mathsf{e}}
\newcommand{\vbar}{\overline{v}}

\newcommand{\VCAT}{\V\text{-}\mathsf{CAT}}
\newcommand{\frakT}{\mathfrak{T}}

\begin{document}

\title{\Large \textbf{Free algebras of topologically enriched \\ multi-sorted equational theories}}
\author{Jason Parker\let\thefootnote\relax\thanks{We acknowledge the support of the Natural Sciences and Engineering Research Council of Canada (NSERC), [funding reference numbers RGPIN-2019-05274, RGPAS-2019-00087, DGECR-2019-00273].  Cette recherche a \'et\'e financ\'ee par le Conseil de recherches en sciences naturelles et en g\'enie du Canada (CRSNG), [num\'eros de r\'ef\'erence RGPIN-2019-05274, RGPAS-2019-00087, DGECR-2019-00273].} \medskip
\\
\small Brandon University, Brandon, Manitoba, Canada}
\date{}

\maketitle

\begin{abstract}
Classical \emph{multi-sorted equational theories} and their \emph{free algebras} have been fundamental in mathematics and computer science. In this paper, we present a generalization of multi-sorted equational theories from the classical (\emph{$\Set$-enriched}) context to the context of enrichment in a symmetric monoidal category $\V$ that is topological over $\Set$. Prominent examples of such categories include: various categories of topological and measurable spaces; the categories of models of \emph{relational Horn theories without equality}, including the categories of preordered sets and (extended) pseudo-metric spaces; and the categories of \emph{quasispaces} (a.k.a.~\emph{concrete sheaves}) on \emph{concrete sites}, which have recently attracted interest in the study of programming language semantics.

Given such a category $\V$, we define a notion of \emph{$\V$-enriched multi-sorted equational theory}. We show that every $\V$-enriched multi-sorted equational theory $\calT$ has an underlying classical multi-sorted equational theory $|\calT|$, and that free $\calT$-algebras may be obtained as suitable liftings of free $|\calT|$-algebras. We establish explicit and concrete descriptions of free $\calT$-algebras, which have a convenient \emph{inductive} character when $\V$ is cartesian closed. We provide several examples of $\V$-enriched multi-sorted equational theories, and we also discuss the close connection between these theories and the presentations of $\V$-enriched algebraic theories and monads studied in recent papers by the author and Lucyshyn-Wright.
\end{abstract}

\section{Introduction}

Classical \emph{multi-sorted equational theories} and their \emph{free algebras} have been fundamental in mathematics and computer science. For example, they have been prominently employed in studying algebraic specification and abstract algebraic datatypes \cite{Mitchell_foundations}, computational effects \cite{PlotkinPower}, and algebraic databases and data integration \cite{Alg_databases, Alg_int}. Given a set $\calS$ of \emph{sorts}, recall that a \emph{(classical) $\calS$-sorted signature} is a set $\Sigma$ of \emph{operation} (or \emph{function}) \emph{symbols} equipped with an assignment to each operation symbol $\sigma \in \Sigma$ of a finite tuple $(S_1, \ldots, S_n) \in \calS^*$ of \emph{input sorts} and an \emph{output sort} $S \in \calS$, in which case we write $\sigma : S_1 \times \ldots \times S_n \to S$. A \emph{$\Sigma$-algebra} $A$ is an $\calS$-sorted family of \emph{carrier sets} $A = \left(A_S\right)_{S \in \calS}$ equipped with, for each $\sigma : S_1 \times \ldots \times S_n \to S$ in $\Sigma$, a function
\[ \sigma^A : A_{S_1} \times \ldots \times A_{S_n} \to A_S. \] A \emph{(classical) $\calS$-sorted equational theory} is a pair $\calT = (\Sigma, \calE)$ consisting of a classical $\calS$-sorted signature $\Sigma$ and a set $\calE$ of \emph{syntactic equations} between \emph{terms} over the signature $\Sigma$, and a \emph{$\calT$-algebra} is a $\Sigma$-algebra that satisfies these equations. Writing $\calT\Alg$ for the category of $\calT$-algebras, the forgetful functor $U^\calT : \calT\Alg \to \Set^\calS$ that sends a $\calT$-algebra to its underlying $\calS$-sorted family of carrier sets has a left adjoint $F^\calT : \Set^\calS \to \calT\Alg$ with a well-known explicit and constructive description in terms of \emph{term algebras}. The \emph{initial $\calT$-algebra} can be obtained as the free $\calT$-algebra on the $\calS$-sorted family constant at the empty set, and this $\calT$-algebra is often viewed as the ``intended model'' of the multi-sorted equational theory $\calT$.

It is well known that classical $\calS$-sorted equational theories can be regarded as \emph{presentations} of \emph{(classical) $\calS$-sorted algebraic} (or \emph{Lawvere}) \emph{theories} \cite{Law:PhD, Benabou_structures, Algebraic_theories}, where the latter classify categories of algebras for $\calS$-sorted equational theories up to a suitable notion of isomorphism. It is also known that one can \emph{enrich} this notion of classical $\calS$-sorted algebraic theory by replacing the role of $\Set$ with that of a suitable symmetric monoidal closed category $\V$, thus yielding a notion of \emph{$\V$-enriched $\calS$-sorted algebraic theory}. More generally (and precisely), given a symmetric monoidal closed category $\V$ with equalizers and a \emph{subcategory of arities} (i.e.~a dense full sub-$\V$-category) $\J \hookrightarrow \C$ in a $\V$-category $\C$, a \emph{$\J$-theory} is a $\V$-category $\scrT$ equipped with an identity-on-objects $\V$-functor $\tau : \J^\op \to \scrT$ satisfying a certain condition. The notion of $\J$-theory was defined and studied in such generality in \cite{Struct}, generalizing and extending prior work in \cite{Law:PhD, Lintonequational, Dubucsemantics, BorceuxDay, PowerLawvere, NishizawaPower, LR, EAT, BourkeGarner}. Under suitable conditions, the category $\Th_{\underJ}(\C)$ of $\J$-theories is equivalent to the category $\Mnd_{\underJ}(\C)$ of \emph{$\J$-ary} (or \emph{$\J$-nervous}) $\V$-monads on $\C$ (see \cite[Theorem 4.13]{Struct}). Taking $\C$ to be the product $\V$-category $\V^\calS$, there is a specific subcategory of arities denoted $\N_\calS \hookrightarrow \V^\calS$ (defined in \S\ref{smc_section} below) for which $\N_\calS$-theories provide a notion of \emph{$\V$-enriched $\calS$-sorted algebraic theory}. In particular, with $\V = \Set$, these $\N_\calS$-theories are precisely the classical $\calS$-sorted algebraic theories, while $\N_\calS$-ary monads are precisely the finitary monads on $\Set^\calS$.  

Generalizing the situation for classical $\calS$-sorted algebraic theories, \emph{presentations} of $\J$-theories and $\J$-ary $\V$-monads by generalized signatures and equations were initially studied in certain specific settings in \cite{KellyPower, KellyLackstronglyfinitary}, and recently in more general settings in \cite{Pres}. In \cite{EP, Ax}, we then demonstrated that the presentations studied in \cite{KellyPower, KellyLackstronglyfinitary, Pres} can be concretely reformulated as \emph{diagrammatic $\J$-presentations} consisting of \emph{parameterized $\J$-ary operations} and \emph{diagrammatic equations}, closely mirroring the equational presentations of classical $\calS$-sorted algebraic theories recalled above. In particular, given a symmetric monoidal closed category $\V$ and a set $\calS$ of sorts, the notion of \emph{diagrammatic $\N_\calS$-presentation} (relative to the aforementioned subcategory of arities $\N_\calS \hookrightarrow \V^\calS$) provides a concrete notion of presentation for $\V$-enriched $\calS$-sorted algebraic theories.  

The purpose of the present work is to thoroughly study these presentations for $\V$-enriched $\calS$-sorted algebraic theories in the special and simplifying case where $\V$ is a symmetric monoidal category equipped with a \emph{topological (forgetful) functor} $|-| : \V \to \Set$ (in the sense of \cite[\S 21]{AHS}) that is strict symmetric monoidal (with respect to the cartesian monoidal structure on $\Set$). For such $\V$, we shall thereby develop a treatment of \emph{$\V$-enriched multi-sorted equational theories} that generalizes the treatment of classical multi-sorted equational theories. Prominent examples of such $\V$ include:
\begin{itemize}[leftmargin=*]
\item Various categories of topological spaces (including the category $\Top$ of all topological spaces and continuous functions), and the category $\Meas$ of measurable spaces.
\item The categories of models of \emph{relational Horn theories without equality} \cite{Monadsrelational, Extensivity}, including the category of preordered sets and monotone functions, and the category of (extended) pseudo-metric spaces and non-increasing functions.
\item The categories of \emph{quasispaces} (a.k.a.~\emph{concrete sheaves}) on \emph{concrete sites}, which have recently attracted interest in the study of programming language semantics \cite{Heunenprobability, Concrete_cats_recursion}, and include the categories of diffeological spaces, quasi-Borel spaces \cite{Heunenprobability}, bornological sets, (abstract) simplicial complexes, and convergence spaces \cite{Baezsmooth, Dubucquasitopoi}. Such categories are also known as \emph{concrete quasitoposes} \cite{Dubucquasitopoi}.
\end{itemize} 

Note that here we actually do \emph{not} assume that the symmetric monoidal structure of $\V$ is closed: this is so that we can accommodate examples such as $\Top$ and $\Meas$  with the (non-closed) cartesian monoidal structure. Thus, the (presentations of) $\V$-enriched $\calS$-sorted algebraic theories that we shall consider in this paper are, in the special case where $\V$ is topological over $\Set$, slightly more general than those studied in \cite{EP, Struct, Ax} (where $\V$ is always assumed to be closed).
  
In detail, given such a category $\V$ and a set $\calS$ of sorts, we first define a notion of \emph{$\V$-enriched $\calS$-sorted signature} $\Sigma$, which extends the notion of classical $\calS$-sorted signature by requiring that each operation symbol $\sigma \in \Sigma$ be equipped also with a \emph{parameter object} $P$ of $\V$. A \emph{$\Sigma$-algebra} $A$ is then an $\calS$-sorted family $A = \left(A_S\right)_{S \in \calS}$ of \emph{carrier objects of $\V$} equipped with, for each $\sigma : S_1 \times \ldots \times S_n \to S$ in $\Sigma$ with parameter object $P$, a \emph{$\V$-morphism}
\[ \sigma^A : P \tensor \left(A_{S_1} \times \ldots \times A_{S_n}\right) \to A_S. \] A \emph{$\V$-enriched $\calS$-sorted equational theory $\calT$} is a pair $\calT = (\Sigma, \calE)$ consisting of a $\V$-enriched $\calS$-sorted signature $\Sigma$ and a set $\calE$ of \emph{algebraic $\Sigma$-equations}, while a \emph{$\calT$-algebra} is a $\Sigma$-algebra that satisfies these equations. As we show in \S\ref{smc_section}, when the symmetric monoidal structure of $\V$ is closed, this definition is essentially an unpacking of the aforementioned notion of \emph{diagrammatic $\N_\calS$-presentation} \cite{EP, Ax} relative to the aforementioned subcategory of arities $\N_\calS \hookrightarrow \V^\calS$. The category $\calT\Alg$ of $\calT$-algebras is equipped with a forgetful functor $U^\calT : \calT\Alg \to \V^\calS$ that sends a $\calT$-algebra to its underlying $\calS$-sorted family of carrier objects of $\V$; when $\V$ is symmetric monoidal closed, the data of $U^\calT : \calT\Alg \to \V^\calS$ become $\V$-enriched. 

We show that every $\V$-enriched $\calS$-sorted equational theory $\calT = (\Sigma, \calE)$ has an underlying classical $\calS$-sorted equational theory $|\calT| = (|\Sigma|, |\calE|)$. In fact, crucially using the assumption that $\V$ is topological over $\Set$, we show that a $\V$-enriched $\calS$-sorted equational theory $\calT$ is equivalently given by a $\V$-enriched $\calS$-sorted signature $\Sigma$ and a set of syntactic equations between terms over the underlying classical $\calS$-sorted signature $|\Sigma|$. We establish that the forgetful functor $U^\calT$ has a left adjoint $F^\calT : \V^\calS \to \calT\Alg$, and that the resulting adjunction $F^\calT \dashv U^\calT : \calT\Alg \to \V^\calS$ is a \emph{(strict) lifting} of the adjunction $F^{|\calT|} \dashv U^{|\calT|} : |\calT|\Alg \to \Set^\calS$, where the latter result again relies on the assumption that $\V$ is topological over $\Set$. In particular, given an $\calS$-sorted family $X = \left(X_S\right)_{S \in \calS}$ of objects of $\V$, the free $\calT$-algebra on $X$ can be realized as the free $|\calT|$-algebra (constructed as a certain \emph{term algebra}) on the underlying $\calS$-sorted family of sets $|X| = \left(\left|X_S\right|\right)_{S \in \calS}$, equipped with an appropriate $\V$-structure. We use these results to establish explicit descriptions of free $\calT$-algebras, which have a more concrete and (\emph{countably}) \emph{inductive} character when $\V$ is cartesian closed.

Explicit descriptions of free algebras for certain examples of $\V$-enriched \emph{single-sorted} equational theories (with $\V$ topological over $\Set$) have been previously studied in the literature (early sources include \cite{Markoff, Malcev_free, Wyler_top_alg, Marxen, Taylor_top_alg, Nel_top_alg}). For example, \cite{Porst_existence} explicitly describes free topological groups, taking $\calT$ to be the (trivially $\V$-enriched) single-sorted equational theory for groups and $\V$ to be the category $\Top$ of topological spaces or the category $\CGTop$ of compactly generated topological spaces. \cite{Porst_cc} explicitly describes free $\calT$-algebras in $\V$ for an arbitrary (trivially $\V$-enriched) classical single-sorted equational theory $\calT$ and an arbitrary cartesian closed topological category $\V$ over $\Set$ (of which $\CGTop$ is an example). Given a ring object $R$ in a cartesian closed topological category $\V$ over $\Set$, \cite{Seal_modules} studies free $R$-modules in $\V$, which can be described as the free algebras for a certain $\V$-enriched single-sorted equational theory. In \cite{Battenfeld_comp}, free algebras of $\V$-enriched single-sorted equational theories with $\V = \Top, \CGTop$, and certain categories in topological domain theory are described.

We now outline the paper. After recalling some relevant background material on concrete and topological categories in \S\ref{background}, in \S\ref{first_section} we define and study $\V$-enriched multi-sorted signatures and their free algebras. These free algebras can be somewhat explicitly described under (just) our general assumptions on $\V$ (as we show in Theorems \ref{taut_sig_thm} and \ref{explicit_free_fibre_prop}), but they can be described even more concretely and constructively when $\V$ is cartesian closed (as we show in Theorem \ref{free_sig_alg_cc}). 

In \S\ref{theories_section} we define and study $\V$-enriched multi-sorted equational theories and their free algebras. We initially define such theories in terms of $\V$-enriched multi-sorted signatures $\Sigma$ and \emph{algebraic $\Sigma$-equations}, which are pairs of natural transformations between certain functors $\Sigma\Alg \to \V$. We then show in Theorem \ref{equiv_theory_prop} that (because $\V$ is topological over $\Set$) these theories can be given a more syntactic formulation that is similar to the syntactic formulation of classical multi-sorted equational theories. As with signatures, free algebras for $\V$-enriched multi-sorted equational theories can be somewhat explicitly described under (just) our general assumptions on $\V$ (as we show in Theorems \ref{taut_thm} and \ref{explicit_free_fibre_prop_2}), but they can be described even more concretely and constructively when $\V$ is cartesian closed (as we show in Theorems \ref{cc_free_alg_thm} and \ref{free_alg_cc}). \S\ref{examples} establishes several examples of $\V$-enriched multi-sorted equational theories, such as: the single-sorted theory of \emph{$H$-monoids} (with $\V = \Top$); what we call \emph{homotopy weakenings} of classical multi-sorted equational theories (again with $\V = \Top$); multi-sorted theories of \emph{covariant $\V$-presheaves}; and a certain subclass of the \emph{relational single-sorted algebraic theories} of \cite{Monadsrelational} (which generalize the \emph{quantitative single-sorted algebraic theories} of \cite{Quant_alg_reasoning} and the \emph{ordered single-sorted algebraic theories} of \cite{finitarymonadsposets}). 

In \S\ref{smc_section}, under the assumption that $\V$ is symmetric monoidal closed, we detail the previously discussed relationship between $\V$-enriched multi-sorted equational theories and the \emph{(diagrammatic) presentations} of certain $\V$-enriched algebraic theories and monads studied in \cite{Pres, EP, Struct, Ax}. Finally, \S\ref{appendix} is an Appendix that thoroughly unpacks, in many prominent examples of topological categories over $\Set$, the specific constructions that we use to explicitly and concretely describe free algebras in our main results. We intend to pursue applications of the present work to the development of topologically enriched treatments of algebraic specification, computational effects, and algebraic databases.   

\section{Background}
\label{background}

We begin by recalling some background material on concrete and topological categories, which can be found (e.g.) in \cite{AHS}.

\begin{para}[\textbf{Concrete categories and their fibres}]
\label{concrete_para}
Let $\X$ be a (locally small) category. A \emph{concrete category over $\X$} is a category $\V$ equipped with a faithful functor $|-| : \V \to \X$ (which we shall often not mention explicitly). We shall usually not distinguish notationally between a morphism $f : V \to W$ of $\V$ and the underlying morphism $f = |f| : |V| \to |W|$ of $\X$. Given objects $V$ and $W$ of $\V$ and a morphism $f : |V| \to |W|$ of $\X$, we say that $f$ is \emph{$\V$-admissible} if it lifts (necessarily uniquely) to a morphism $f : V \to W$ of $\V$; in this case, we may also say that $f : |V| \to |W|$ \emph{is a $\V$-admissible morphism $f : V \to W$}, or that $f : |V| \to |W|$ \emph{lifts to a $\V$-morphism $f : V \to W$}.

Given an object $X$ of $\X$, the \emph{fibre} $\Fib(X) = \Fib_\V(X)$ is the preordered class consisting of all the objects $V$ of $\V$ such that $|V| = X$, with $V \leq V'$ if the identity morphism $1_X : |V| = X \to X = \left|V'\right|$ of $X$ in $\X$ is $\V$-admissible. A concrete category $\V$ over $\X$ is \emph{amnestic} if each fibre $\Fib(X)$ ($X \in \ob\X$) is a partially ordered class.
\end{para} 

\begin{para}[\textbf{Initial sources, final sinks, and topological categories}]
\label{top_para}
A \emph{source} in an arbitrary category $\V$ is a (possibly large) family of morphisms $(f_i : V \to V_i)_{i \in I}$ in $\V$ with the same domain, while a \emph{sink} in $\V$ is a source in $\V^\op$, i.e.~a family of morphisms $(g_i : V_i \to V)_{i \in I}$ in $\V$ with the same codomain. A source $(f_i : V \to V_i)_{i \in I}$ in a concrete category $\V$ over a category $\X$ is \emph{initial} (or \emph{$|-|$-initial}) if for each object $W$ of $\V$ and each morphism $f : |W| \to |V|$ of $\X$, the $\X$-morphism $f$ is $\V$-admissible iff the composite $\X$-morphisms $f_i \circ f : |W| \to |V_i|$ are $\V$-admissible for all $i \in I$. In particular, a morphism of $\V$ is \emph{initial} (or \emph{$|-|$-initial}) if the source consisting of just that morphism is initial. Dually, a sink $(g_i : V_i \to V)_{i \in I}$ in a concrete category $\V$ over $\X$ is \emph{final} if for each object $W$ of $\V$ and each morphism $g : |V| \to |W|$ of $\X$, the $\X$-morphism $g$ is $\V$-admissible iff the composite $\X$-morphisms $g \circ g_i : |V_i| \to |W|$ are $\V$-admissible for all $i \in I$. In particular, a morphism of $\V$ is \emph{final} if the sink consisting of just that morphism is final. A morphism of $\V$ that is both final and epimorphic is called a \emph{quotient morphism}. 

Given a concrete category $\V$ over $\X$, a \emph{structured source (in $\X$)} is a (possibly large) family of $\X$-morphisms $\left(f_i : X \to |V_i|\right)_{i \in I}$ with $X \in \ob\X$ and $V_i \in \ob\V$ for all $i \in I$, while a \emph{structured sink (in $\X$)} is a (possibly large) family of $\X$-morphisms $\left(g_i : |V_i| \to X\right)_{i \in I}$ with $X \in \ob\X$ and $V_i \in \ob\V$ for all $i \in I$. A concrete category $\V$ over $\X$ is \emph{topological (over $\X$)} if it is amnestic and every structured source $\left(f_i : X \to |V_i|\right)_{i \in I}$ has an initial lift, meaning that there is an object $V$ of $\V$ with $|V| = X$ such that each $f_i : |V| = X \to |V_i|$ ($i \in I$) is $\V$-admissible, and the resulting source $(f_i : V \to V_i)_{i \in I}$ in $\V$ is initial; in this situation, we sometimes say that the object $X$ of $\X$ has been \emph{equipped with the initial structure $V$} induced by the morphisms $f_i$ ($i \in I$). We also say that the faithful functor $|-| : \V \to \X$ is \emph{topological}. A topological category over $\X$ also satisfies the dual condition that every structured sink $\left(g_i : \left|V_i\right| \to X\right)_{i \in I}$ has a final lift $\left(g_i : V_i \to V\right)_{i \in I}$ (see \cite[21.9]{AHS}); in this situation, we sometimes say that the object $X$ of $\X$ has been \emph{equipped with the final structure} $V$ induced by the morphisms $f_i$ ($i \in I$). 

A topological functor $|-| : \V \to \X$ uniquely lifts all limits (via initiality) and all colimits (via finality) that exist in $\X$, and strictly preserves all limits and colimits (see \cite[Proposition 21.15]{AHS}). In particular, every topological category over $\X = \Set$ is complete and cocomplete. 
\end{para}

\begin{para}[\textbf{Complete lattice fibres of topological categories}]
\label{fibre_para}
Let $\V$ be a topological category over a category $\X$. Each fibre $\Fib(X)$ ($X \in \ob\X$) is then a complete (possibly large) lattice (see \cite[Proposition 21.11]{AHS}). Given a (possibly large) family $\left(V_i\right)_{i \in I}$ of elements of $\Fib(X)$ ($X \in \ob\X$), the supremum $\bigvee_{i \in I} V_i$ in $\Fib(X)$ is the codomain of the final lift of the structured sink of identity functions $\left(1_X : \left|V_i\right| = X \to X\right)_{i \in I}$, while the infimum $\bigwedge_{i \in I} V_i$ in $\Fib(X)$ is the domain of the initial lift of the structured source of identity functions $\left(1_X : X \to X = \left|V_i\right|\right)_{i \in I}$. Given a structured source $\left(f_i : X \to \left|V_i\right|\right)_{i \in I}$, if for each $i \in I$ we write $V_{i, f}$ for the domain of the initial lift of the singleton structured source $\left(f_i : X \to |V_i|\right)$, then the domain of the initial lift of the structured source $\left(f_i : X \to \left|V_i\right|\right)_{i \in I}$ is equal to the infimum $\bigwedge_{i \in I} V_{i, f}$ in $\Fib(X)$.

In particular (for $I = \varnothing$), the smallest (resp.~largest) element $\Disc X$ (resp.~$\Indisc X$) of $\Fib(X)$ is the \emph{discrete} (resp.~\emph{indiscrete}) object of $\V$ on $X$: every $\X$-morphism $f : \left|\Disc X\right| = X \to |V|$ ($V \in \ob\V$) is $\V$-admissible, while every $\X$-morphism $f : |V| \to X = \left|\Indisc X\right|$ ($V \in \ob\V$) is $\V$-admissible (see \cite[Proposition 21.11]{AHS}). The topological functor $|-| : \V \to \X$ then has a fully faithful left adjoint section $\Disc : \X \to \V$ and a fully faithful right adjoint section $\Indisc : \X \to \V$ (see \cite[Proposition 21.12]{AHS}).    
\end{para}

\begin{para}[\textbf{$\calS$-sorted topological categories}]
\label{prod_top_para}
Let $\V$ be a concrete category over a category $\X$, let $\calS$ be a set, and consider the product categories $\V^\calS = \prod_{S \in \calS} \V$ and $\X^\calS = \prod_{S \in \calS} \X$. We shall typically refer to an object (resp.~morphism) of $\V^\calS$ as an \emph{$\calS$-sorted object} (resp.~an \emph{$\calS$-sorted morphism}) of $\V$, and similarly for $\X^\calS$. In particular, when $\X = \Set$, we shall typically refer to an object (resp.~morphism) of $\X^\calS = \Set^\calS$ as an \emph{$\calS$-sorted set} (resp.~an \emph{$\calS$-sorted function}). 

The category $\V^\calS$ is concrete over $\X^\calS$ via the faithful functor $|-|^\calS : \V^\calS \to \X^\calS$ that sends an $\calS$-sorted object $V = \left(V_S\right)_{S \in \calS}$ of $\V$ to the $\calS$-sorted object $|V|^\calS := \left(\left|V_S\right|\right)_{S \in \calS}$ of $\X$, and sends an $\calS$-sorted morphism $f = \left(f_S\right)_{S \in \calS} : V \to W$ of $\V$ to the $\calS$-sorted morphism $|f|^\calS := \left(|f_S|\right)_{S \in \calS} : |V|^\calS \to |W|^\calS$ of $\X$. We shall often omit the superscript from $|-|^\calS$ when this is unlikely to cause confusion. Note that the concrete category $\V^\calS$ is amnestic if the concrete category $\V$ is amnestic.  Given an $\calS$-sorted object $X$ of $\X$, the fibre $\Fib(X) = \Fib_{\V^\calS}(X)$ is isomorphic (as a preordered class) to the product preordered class $\prod_{S \in \calS} \Fib\left(X_S\right)$ of the fibres $\Fib\left(X_S\right) = \Fib_\V\left(X_S\right)$ ($S \in \calS$). 

If $\V$ is topological over $\X$, then $\V^\calS$ is topological over $\X^\calS$, with initial lifts (resp.~final lifts) of structured sources (resp.~structured sinks) given ``pointwise''. Explicitly, let $\left(f^i : X \to \left|V^i\right|^\calS\right)_{i \in I}$ be a structured source in $\X^\calS$. Then for each $S \in \calS$, we obtain the structured source $\left(f^i_S : X_S \to \left|V^i_S\right|\right)_{i \in I}$ in $\X$. Because $\V$ is topological over $\X$, for each $i \in I$ the structured source $\left(f^i_S : X_S \to \left|V^i_S\right|\right)_{i \in I}$ has an initial lift $\left(f^i_S : V_S \to V^i_S\right)_{i \in I}$ in $\V$. We then define the $\calS$-sorted object $V := \left(V_S\right)_{S \in \calS}$ of $\V$, and the resulting source $\left(f^i : V \to V^i\right)_{i \in I}$ in $\V^\calS$ is initial. In particular, if $\V$ is topological over $\X = \Set$, then $\V^\calS$ is topological over $\X^\calS = \Set^\calS$.   
\end{para}

\begin{para}[\textbf{Suitable symmetric monoidal (closed) structures on topological categories}]
\label{sym_mon_para}
Beginning in \S\ref{first_section}, we shall be primarily concerned with the following data: a topological category $\V$ over $\Set$ such that $\V = (\V, \tensor, I)$ is also a symmetric monoidal category (not necessarily \emph{closed}) and the topological functor $|-| : \V \to \Set$ is strict symmetric monoidal with respect to the cartesian symmetric monoidal structure on $\Set$. Since the topological functor $|-| : \V \to \Set$ strictly preserves products \eqref{top_para}, it is automatically strict symmetric monoidal when $\V$ is (also) equipped with the cartesian symmetric monoidal structure. 

A topological functor $|-| : \V \to \Set$ is also strict symmetric monoidal with respect to the \emph{canonical (non-cartesian) symmetric monoidal closed structure} on the topological category $\V$, which we now recall from \cite[\S 2.2]{Sato}. Let $X$ and $Y$ be objects of $\V$. For $x \in |X|$ and $y \in |Y|$, we write $\Gamma_x : |Y| \to |X| \times |Y|$ for the function given by $y \mapsto (x, y)$ and $\Gamma_y : |X| \to |X| \times |Y|$ for the function given by $x \mapsto (x, y)$. Then the monoidal product $X \tensor Y$ in $\V$ is the codomain of the final lift of the structured sink $\left(\Gamma_x : |Y| \to |X| \times |Y|\right)_{x \in |X|} \bigcup \left(\Gamma_y : |X| \to |X| \times |Y|\right)_{y \in |Y|}$. For each $x \in |X|$, we also write $\ev_x : \V(X, Y) \to |Y|$ for the evaluation function given by $f \mapsto f(x)$, where $\V(X, Y)$ is the hom-set from $X$ to $Y$ in the ordinary category $\V$. The internal hom $[X, Y]$ in $\V$ is then the domain of the initial lift of the structured source $\left(\ev_x : \V(X, Y) \to |Y|\right)_{x \in |X|}$. Finally, the unit object $I$ is the discrete object of $\V$ on a singleton set. For example, when $\V = \Top$ (see Examples \ref{examples_ex} and \ref{top_ex} below), the tensor product $X \tensor Y$ of topological spaces $X$ and $Y$ equips the set $|X| \times |Y|$ with the \emph{topology of separate continuity}, while the internal hom $[X, Y]$ equips the set $\Top(X, Y)$ of continuous functions $X \to Y$ with the \emph{topology of pointwise convergence} (see, e.g., \cite[7.1.6]{Borceux2}).      
\end{para}

\begin{ex}[\textbf{Examples of topological categories over $\Set$}]
\label{examples_ex}
We now list some prominent examples of topological categories over $\Set$. We provide a more comprehensive treatment of these examples in the Appendix (\S\ref{appendix}). We point out which examples are cartesian closed, since our strongest results will apply to such topological categories over $\Set$ (see \S\ref{sig_alg_subsection_cc} and \S\ref{cong_section}).
\begin{itemize}[leftmargin=*]
\item The category $\Set$, equipped with the identity functor $1_\Set : \Set \to \Set$, is topological over $\Set$, and is cartesian closed.

\item Perhaps the canonical example of a topological category over $\Set$ is the category $\Top$ of topological spaces and continuous functions, equipped with the forgetful functor $|-| : \Top \to \Set$ that sends a topological space to its underlying set. It is well known that $\Top$ is not cartesian closed, but it is symmetric monoidal closed when equipped with the canonical symmetric monoidal closed structure of \ref{sym_mon_para}.

\item The following example comes from \cite{EscardoCCC}. Let $\calC$ be a fixed class of topological spaces, called the \emph{generating spaces}. A topological space $X$ is \emph{$\calC$-generated} if $X$ has the final topology induced by all continuous functions into $X$ from spaces in $\calC$. We write $\Top_\calC \hookrightarrow \Top$ for the full subcategory of $\Top$ consisting of the $\calC$-generated spaces, which is topological over $\Set$ when equipped with the restricted forgetful functor $|-| : \Top_\calC \to \Set$. When the class $\calC$ of generating spaces is \emph{productive} in the sense of \cite[Definition 3.5]{EscardoCCC}, the category $\Top_\calC$ is cartesian closed by \cite[Theorem 3.6]{EscardoCCC}. By \cite[Definition 3.3]{EscardoCCC}, examples of $\Top_\calC$ with $\calC$ productive include the categories of compactly generated spaces, core compactly generated spaces, locally compactly generated spaces, and sequentially generated spaces.

\item The category $\Meas$ of measurable spaces and measurable functions is topological over $\Set$ when equipped with the forgetful functor $|-| : \Meas \to \Set$ that sends a measurable space to its underlying set (see e.g.~\cite[\S 2.1]{Sato}). It is known that $\Meas$ is not cartesian closed, but it is symmetric monoidal closed when equipped with the canonical symmetric monoidal closed structure of \ref{sym_mon_para} (which is specifically unpacked for $\Meas$ in \cite[\S 2]{Sato}).

\item Given a \emph{relational Horn theory $\T$ without equality} (in the precise sense of \cite[Definition 3.5]{Extensivity}), the category $\T\Mod$ of $\T$-models and their morphisms is topological over $\Set$. $\T\Mod$ is not cartesian closed in general, but it is cartesian closed under suitable conditions on $\T$ (see \cite[Theorem 6.15]{Exp_relational}). The canonical symmetric monoidal closed structure on $\T\Mod$ \eqref{sym_mon_para} was previously considered in \cite[Corollary 3.13]{Monadsrelational} (see also \cite[3.12]{Exp_relational}).Three prominent examples of $\T\Mod$ for a relational Horn theory $\T$ without equality are the category $\mathsf{Rel}$ of sets equipped with a binary relation, the category $\Preord$ of preordered sets and monotone mappings, and the category $\PMet$ of (extended) pseudo-metric spaces and non-expansive mappings. For a more comprehensive list of examples, see \cite[Example 3.5]{Monadsrelational} or \cite[Example 3.7]{Extensivity}.

\item The following example originates from \cite{Dubucquasitopoi}: the category of \emph{quasispaces} (or \emph{concrete sheaves}) on a \emph{concrete site} is topological over $\Set$ (such a category is also known as a \emph{concrete quasitopos}). Categories of quasispaces on concrete sites are always cartesian closed (even locally cartesian closed). Prominent examples of categories of quasispaces on concrete sites (from \cite{Dubucquasitopoi}) include the categories of convergence spaces, subsequential spaces, bornological sets, pseudotopological spaces, and quasitopological spaces. Other examples (from \cite{Baezsmooth}) include the categories of diffeological spaces, Chen spaces, and (abstract) simplicial complexes, while \cite{Heunenprobability} provides the example of quasi-Borel spaces. Some further examples (such as the example of \emph{quantum sets}) are considered in \cite{Concrete_cats_recursion}, while the example of \emph{$C$-spaces} is studied in \cite{Escardo_Xu}.
\end{itemize}
\end{ex}

\section{Free algebras of enriched multi-sorted signatures}
\label{first_section}

For the remainder of the paper (unless otherwise stated), we fix the following data:
\begin{itemize}[leftmargin=*]
\item A topological category $\V$ over $\Set$ such that $\V = (\V, \tensor, I)$ is also a symmetric monoidal category and the given topological functor $|-| : \V \to \Set$ is strict symmetric monoidal (with respect to the cartesian symmetric monoidal structure on $\Set$). In particular, this implies that $|X \tensor Y| = |X| \times |Y|$ for all $X, Y \in \ob\V$, and that $|I|$ is a singleton.    
 
\item A set $\calS$, whose elements we call \emph{sorts}. We write $\calS^*$ for the set of all finite (possibly empty) sequences of elements of $\calS$.
\end{itemize}

\subsection{Enriched multi-sorted signatures}
\label{sig_def_subsection}

We begin by defining the notion of a $\V$-enriched multi-sorted signature (for related notions of signature, see Remark \ref{sig_rmk} below).  

\begin{defn_sub}
\label{signature}
A \textbf{$\V$-enriched $\calS$-sorted signature} is a set $\Sigma$ of \emph{operation symbols} equipped with an assignment to each operation symbol $\sigma \in \Sigma$ of \emph{input sorts} $(S_1, \ldots, S_n) \in \calS^*$, an \emph{output sort} $S \in \calS$, and a \emph{parameter (object)} $P = P_\sigma \in \ob\V$. We say that an operation symbol $\sigma \in \Sigma$ has \emph{type} $((S_1, \ldots, S_n), S, P)$ if $\sigma$ has input sorts $(S_1, \ldots, S_n)$, output sort $S$, and parameter $P$. An operation symbol of $\Sigma$ is \textbf{ordinary} if its parameter object is the unit object $I$ of the symmetric monoidal category $\V$. A $\V$-enriched $\calS$-sorted signature is \textbf{ordinary} if its operation symbols are all ordinary.

When $\calS$ is a singleton, we refer to a $\V$-enriched $\calS$-sorted signature as a \textbf{$\V$-enriched single-sorted signature}. Given a $\V$-enriched single-sorted signature $\Sigma$, we say that an operation symbol $\sigma \in \Sigma$ has \emph{arity} $n \geq 0$ if the length of its tuple of input sorts is $n$.

In the context $\V = \Set$, we say that a \textbf{classical} \textbf{$\calS$-sorted signature} is a ordinary $\Set$-enriched $\calS$-sorted signature.       
\end{defn_sub}

\begin{defn_sub}
\label{sig_algebra}
Let $\Sigma$ be a $\V$-enriched $\calS$-sorted signature. A \textbf{$\Sigma$-algebra} $A$ is an object $A = \left(A_S\right)_{S \in \calS}$ of $\V^\calS$ equipped with, for each $\sigma \in \Sigma$ of type $((S_1, \ldots, S_n), S, P)$, a $\V$-morphism \[ \sigma^A : P \tensor \left(A_{S_1} \times \ldots \times A_{S_n}\right) \to A_S. \] When $\sigma$ is ordinary (so that $P = I$), we simply write
\[ \sigma^A : A_{S_1} \times \ldots \times A_{S_n} \to A_S. \] 
Note that when $\V$ is symmetric monoidal \emph{closed} (with internal hom $[-, -]$), we may equivalently write
\[ \sigma^A : P \to \left[A_{S_1} \times \ldots \times A_{S_n}, A_S\right]. \]
We often say that the $\calS$-sorted object $A$ of $\V$ is the \emph{carrier (object)} of the $\Sigma$-algebra $A$. 

Given $\Sigma$-algebras $A$ and $B$, a \textbf{morphism of $\Sigma$-algebras} $f : A \to B$ is a morphism $f = \left(f_S\right)_{S \in \calS} : A = \left(A_S\right)_{S \in \calS} \to \left(B_S\right)_{S \in \calS} = B$ of $\V^\calS$ that makes the following diagram commute for each $\sigma \in \Sigma$ of type $((S_1, \ldots, S_n), S, P)$: 
\[\begin{tikzcd}
	{P \tensor \left(A_{S_1} \times \ldots \times A_{S_n}\right)} &&& {P \tensor \left(B_{S_1} \times \ldots \times B_{S_n}\right)} \\
	\\
	{A_S} &&& {B_S}.
	\arrow["{1_P \tensor \left(f_{S_1} \times \ldots \times f_{S_n}\right)}", from=1-1, to=1-4]
	\arrow["{\sigma^A}"', from=1-1, to=3-1]
	\arrow["{\sigma^B}", from=1-4, to=3-4]
	\arrow["{f_S}"', from=3-1, to=3-4]
\end{tikzcd}\]
We have a category $\Sigma\Alg$ of $\Sigma$-algebras and their morphisms, which is equipped with a forgetful functor $U^\Sigma : \Sigma\Alg \to \V^\calS$ that sends a $\Sigma$-algebra $A$ to the underlying $\calS$-sorted object $A$ of $\V$.  
\end{defn_sub}

\begin{rmk_sub}
\label{sig_rmk}
When $\V$ is symmetric monoidal closed, the $\V$-enriched $\calS$-sorted signatures of Definition \ref{signature} accord closely with the \emph{free-form signatures (for a subcategory of arities)} studied in \cite{EP, Ax}, as we show in Propositions \ref{equiv_presentations_prop1} and \ref{equiv_presentations_prop2}. Definition \ref{signature} is also an $\calS$-sorted generalization of \cite[Definition 3.1]{Battenfeld_comp}\footnote{Except that \cite[Definition 3.1]{Battenfeld_comp} requires the set of operation symbols to be \emph{countable}.} (wherein a $\V$-enriched single-sorted signature is referred to as \emph{a signature for a (finitary) parameterized algebraic theory for $\V$}). 
\end{rmk_sub}

\begin{rmk_sub}
\label{enriched_rmk}
When $\V$ is symmetric monoidal closed (and $|-| : \V \to \Set$ is represented by the unit object $I$ of $\V$), so that $\V$ and $\V^\calS$ can be regarded as $\V$-categories (for the latter, see \ref{Vcat_para} below), the category $\Sigma\Alg$ underlies a $\V$-category (also denoted $\Sigma\Alg$) and the functor $U^\Sigma : \Sigma\Alg \to \V$ underlies a $\V$-functor (also denoted $U^\Sigma$). In detail, for each $\Sbar = (S_1, \ldots, S_n) \in \calS^*$, we write $U^{\Sbar} : \V^\calS \to \V$ for the $\V$-functor given by $A \mapsto A_{S_1} \times \ldots \times A_{S_n}$ ($A \in \V^\calS$). In particular, for a single sort $S \in \calS$, we write $U^S := U^{(S)} : \V^\calS \to \V$, which is given by $A \mapsto A_S$ ($A \in \V^\calS$). For each $P \in \ob\V$ and each $\Sbar \in \calS^*$ we then have the $\V$-functor $P \tensor U^{\Sbar} : \V^\calS \to \V$ defined as the composite $\V^\calS \xrightarrow{U^{\Sbar}} \V \xrightarrow{P \tensor (-)} \V$, which is given by $A \mapsto P \tensor \left(A_{S_1} \times \ldots \times A_{S_n}\right)$ ($A \in \V^\calS$). Now each hom-object $\Sigma\Alg(A, B)$ ($A, B \in \Sigma\Alg$) is defined as the \emph{pairwise equalizer} \cite[2.1]{Commutants} of the following $\Sigma$-indexed family of parallel pairs in $\V$ (one for each $\sigma \in \Sigma$ of type $\left(\Sbar = (S_1, \ldots, S_n), S, P\right)$):
\[ \V^\calS(A, B) \xrightarrow{\left(P \tensor U^{\Sbar}\right)_{AB}} \V\left(P \tensor \left(A_{S_1} \times \ldots \times A_{S_n}\right), P \tensor \left(B_{S_1} \times \ldots \times B_{S_n}\right)\right) \] \[ \xrightarrow{\V\left(1, \sigma^B\right)} \V\left(P \tensor \left(A_{S_1} \times \ldots \times A_{S_n}\right), B_S\right), \]
\[ \V^\calS(A, B) \xrightarrow{U^S_{AB}} \V\left(A_S, B_S\right) \xrightarrow{\V\left(\sigma^A, 1\right)} \V\left(P \tensor \left(A_{S_1} \times \ldots \times A_{S_n}\right), B_S\right). \]
Composition in $\Sigma\Alg$ is then defined in the unique manner that allows the resulting subobjects $U^\Sigma_{AB} : \Sigma\Alg(A, B) \rightarrowtail \V^\calS(A, B)$ ($A, B \in \Sigma\Alg$) to serve as the structural morphisms of a faithful $\V$-functor $U^\Sigma : \Sigma\Alg \to \V^\calS$ that sends each $\Sigma$-algebra $A$ to its underlying $\calS$-sorted carrier object $A \in \ob\V^\calS$.

However, as our focus in this section is primarily on the explicit description of free $\Sigma$-algebras (i.e.~of the objects in the image of a left adjoint to $U^\Sigma$), we shall not pay much attention to the $\V$-enriched structure of $\Sigma\Alg$ until \S\ref{smc_section}.
\end{rmk_sub}

\noindent For the remainder of \S\ref{first_section}, we focus on establishing explicit descriptions of free algebras for $\V$-enriched $\calS$-sorted signatures, i.e.~of the left adjoint to $U^\Sigma : \Sigma\Alg \to \V^\calS$ for a $\V$-enriched $\calS$-sorted signature $\Sigma$ (in Theorems \ref{taut_sig_thm}, \ref{explicit_free_fibre_prop}, and \ref{free_sig_alg_cc}). For this purpose, we shall require the following material. 

\begin{defn_sub}
\label{ord_sig}
Let $\Sigma$ be a $\V$-enriched $\calS$-sorted signature. The \textbf{underlying classical $\calS$-sorted signature} is the classical $\calS$-sorted signature $|\Sigma|$ \eqref{signature} defined as follows. Given an operation symbol $\sigma \in \Sigma$ of type $((S_1, \ldots, S_n), S, P)$, we consider for each $p \in |P|$ an ordinary operation symbol $\sigma_p$ of type $((S_1, \ldots, S_n), S, 1)$. We then write $|\Sigma|$ for the classical $\calS$-sorted signature whose operation symbols are the ordinary operation symbols $\sigma_p$ for all $\sigma \in \Sigma$ and $p \in \left|P_\sigma\right|$ (with the types just described).         
\end{defn_sub}

\begin{para_sub}[\textbf{The underlying $|\Sigma|$-algebra $|A|$ of a $\Sigma$-algebra $A$}]
\label{ord_sig_para}
Let $\Sigma$ be a $\V$-enriched $\calS$-sorted signature with underlying classical $\calS$-sorted signature $|\Sigma|$ \eqref{ord_sig}. Since $|-| : \V \to \Set$ is faithful and strict monoidal, it readily follows that a $\Sigma$-algebra $A$ is equivalently given by an object $A = \left(A_S\right)_{S \in \calS}$ of $\V^\calS$ and a $|\Sigma|$-algebra $|A|$ with carrier $|A| = \left(\left|A_S\right|\right)_{S \in \calS}$ satisfying the condition that for each $\sigma \in \Sigma$ of type $((S_1, \ldots, S_n), S, P)$, the function
\begin{equation}
\label{ord_sig_eq} 
\sigma^A : \left|P \tensor \left(A_{S_1} \times \ldots \times A_{S_n}\right)\right| = |P| \times \left|A_{S_1}\right| \times \ldots \times \left|A_{S_n}\right| \to \left|A_S\right|
\end{equation}
\[ (p, a_1, \ldots, a_n) \mapsto \sigma_p^{|A|}(a_1, \ldots, a_n) \] 
lifts to a $\V$-admissible morphism 
\[ \sigma^A : P \tensor \left(A_{S_1} \times \ldots \times A_{S_n}\right) \to A_S. \] In particular, every $\Sigma$-algebra $A$ has an \textbf{underlying $|\Sigma|$-algebra} $|A|$ with carrier $|A| = \left(\left|A_S\right|\right)_{S \in \calS}$ and $\sigma_p^{|A|} := \sigma^A(p, -)$ for all $\sigma \in \Sigma$ and $p \in |P_\sigma|$. We readily obtain a faithful functor $|-| = |-|^\Sigma : \Sigma\Alg \to |\Sigma|\Alg$ that sends a $\Sigma$-algebra to its underlying $|\Sigma|$-algebra and makes the following square commute:
\begin{equation}
\label{sig_square}
\begin{tikzcd}
	\Sigma\Alg && {\V^\calS} \\
	\\
	{|\Sigma|\Alg} && {\Set^\calS}.
	\arrow["{U^\Sigma}", from=1-1, to=1-3]
	\arrow["{|-|^\calS}", from=1-3, to=3-3]
	\arrow["{|-|^\Sigma}"', from=1-1, to=3-1]
	\arrow["{U^{|\Sigma|}}"', from=3-1, to=3-3]
\end{tikzcd}
\end{equation}
So we may regard $\Sigma\Alg$ as a concrete category over $|\Sigma|\Alg$, and this concrete category is amnestic \eqref{concrete_para} because the concrete category $\V$ (over $\Set$) is amnestic.
\end{para_sub}

We now show that the commutative square \eqref{sig_square} satisfies the hypotheses of Wyler's \emph{taut lift theorem} (see \cite[Theorem 21.28]{AHS}), which will ultimately allow us to provide our first descriptions of free algebras of $\V$-enriched $\calS$-sorted signatures in Theorems \ref{taut_sig_thm} and \ref{explicit_free_fibre_prop}.

\begin{prop_sub}
\label{taut_lift_hyp}
Let $\Sigma$ be a $\V$-enriched $\calS$-sorted signature with underlying classical $\calS$-sorted signature $|\Sigma|$ \eqref{ord_sig}. The faithful functor $|-|^\Sigma : \Sigma\Alg \to |\Sigma|\Alg$ of \eqref{sig_square} is topological, and $U^\Sigma : \Sigma\Alg \to \V^\calS$ sends $|-|^\Sigma$-initial sources to $|-|^\calS$-initial sources \eqref{top_para}.
\end{prop_sub}

\begin{proof}
To show that $|-|^\Sigma : \Sigma\Alg \to |\Sigma|\Alg$ is topological, let $\left(f_i : A \to \left|B_i\right|^\Sigma\right)_{i \in I}$ be a structured source in $|\Sigma|\Alg$. Since $|-|^\calS : \V^\calS \to \Set^\calS$ is topological \eqref{prod_top_para}, we can equip the carrier object $A = \left(A_S\right)_{S \in \calS}$ of $\Set^\calS$ with the initial structure $A^* = \left(A_S^*\right)_{S \in \calS}$ in $\V^\calS$ induced by the $\calS$-sorted functions $f_i : A \to \left|B_i\right|^\calS$ ($i \in I$), so that the resulting source $\left(f_i : A^* \to B_i\right)_{i \in I}$ in $\V^\calS$ is $|-|^\calS$-initial. Now let $\sigma \in \Sigma$ have type $((S_1, \ldots, S_n), S, P)$, and let us show that the function \[ \sigma^{A^*} : \left|P \tensor \left(A_{S_1}^* \times \ldots \times A_{S_n}^*\right)\right| = |P| \times A_{S_1} \times \ldots \times A_{S_n} \to A_S = \left|A_S^*\right| \] 
\[ (p, a_1, \ldots, a_n) \mapsto \sigma_p^{A}(a_1, \ldots, a_n) \]
lifts to a $\V$-morphism \[ \sigma^{A^*} : P \tensor \left(A_{S_1}^* \times \ldots \times A_{S_n}^*\right) \to A_S^*. \] The source $\left(f_{i, S} : A_S^* \to B_{i, S}\right)_{i \in I}$ in $\V$ is $|-|$-initial (by \ref{prod_top_para}), so it suffices to show for each $i \in I$ that the function $f_{i, S} \circ \sigma^{A^*} : \left|P \tensor \left(A_{S_1}^* \times \ldots \times A_{S_n}^*\right)\right| \to \left|B_{i, S}\right|$ is $\V$-admissible. One immediately verifies that the latter function is equal to the function $\sigma^{B_i} \circ \left(1_P \times \left(f_{i, S_1} \times \ldots \times f_{i, S_n}\right)\right)$ (because $f_i : A \to |B_i|^\Sigma$ is a morphism of $|\Sigma|$-algebras), which is $\V$-admissible because $B_i$ is a $\Sigma$-algebra and each $f_{i, S_j} : A_{S_j}^* \to B_{i, S_j}$ ($1 \leq j \leq n$) is a $\V$-morphism. So (in view of \ref{ord_sig_para}) we have a $\Sigma$-algebra $A^*$ whose underlying $|\Sigma|$-algebra is $A$. 

To show that the source $\left(f_i : A^* \to B_i\right)_{i \in I}$ in $\Sigma\Alg$ is $|-|^\Sigma$-initial, let $B$ be a $\Sigma$-algebra and let $g : |B|^\Sigma \to A$ be a morphism of $|\Sigma|$-algebras with the property that each morphism of $|\Sigma|$-algebras $f_i \circ g : |B|^\Sigma \to |B_i|^\Sigma$ ($i \in I$) lifts to a morphism of $\Sigma$-algebras $f_i \circ g : B \to B_i$. We must show that the morphism of $|\Sigma|$-algebras $g : |B|^\Sigma \to A$ lifts to a morphism of $\Sigma$-algebras $g : B \to A^*$. It is equivalent to show for each $S \in \calS$ that the function $g_S : |B_S| \to A_S$ lifts to a $\V$-morphism $g_S : B_S \to A_S^*$, and this is true because the source $\left(f_{i, S} : A_S^* \to B_{i, S}\right)_{i \in I}$ in $\V$ is $|-|$-initial and (from the assumption on $g$) each function $f_{i, S} \circ g_S : |B_S| \to |B_{i, S}|$ ($i \in I$) lifts to a $\V$-morphism $f_{i, S} \circ g_S : B_S \to B_{i, S}$. This proves that $|-|^\Sigma : \Sigma\Alg \to |\Sigma|\Alg$ is topological. 

To that $U^\Sigma : \Sigma\Alg \to \V^\calS$ sends $|-|^\Sigma$-initial sources to $|-|^\calS$-initial sources, let $(f_i : A \to B_i)_{i \in I}$ be a $|-|^\Sigma$-initial source in $\Sigma\Alg$, and let us show that the source $(f_i : A \to B_i)_{i \in I}$ in $\V^\calS$ is $|-|^\calS$-initial. The source $(f_i : A \to B_i)_{i \in I}$ is a $|-|^\Sigma$-initial lift of the source $\left(f_i : |A|^\Sigma \to |B_i|^\Sigma\right)_{i \in I}$ in $|\Sigma|\Alg$, and so because such initial lifts are unique (because the concrete category $\Sigma\Alg$ over $|\Sigma|\Alg$ is amnestic), it follows that the source $(f_i : A \to B_i)_{i \in I}$ is equal to the source $(f_i : \left(|A|^\Sigma\right)^* \to B_i)_{i \in I}$ constructed in the first part of the proof. So the source $(f_i : A \to B_i)_{i \in I}$ is equal to the source $\left(f_i : \left(|A|^\calS\right)^* \to B_i\right)_{i \in I}$, and the latter source is $|-|^\calS$-initial.                     
\end{proof}

\begin{para_sub}[\textbf{Free algebras of classical $\calS$-sorted signatures}]
\label{free_sig_alg_Set}
Proposition \ref{taut_lift_hyp} shows that the commutative square \eqref{sig_square} satisfies two of the three central assumptions of Wyler's taut lift theorem \cite[Theorem 21.28]{AHS}, the third assumption being that the functor $U^{|\Sigma|} : |\Sigma|\Alg \to \Set$ has a left adjoint. Given an arbitrary classical $\calS$-sorted signature $\Sigma$ (not necessarily of the form $\left|\Sigma'\right|$ for a $\V$-enriched $\calS$-sorted signature $\Sigma'$), we now recall the well-known explicit description of the left adjoint $F^\Sigma : \Set^\calS \to \Sigma\Alg$ for $U^\Sigma : \Sigma\Alg \to \Set^\calS$ (see e.g.~\cite[Remark 14.18]{Algebraic_theories}).

Let $X = \left(X_S\right)_{S \in \calS}$ be an $\calS$-sorted set. We first recursively define, for each sort $S \in \calS$, the set $\Term_\Sigma(X)_S$ of \emph{ground $\Sigma$-terms of sort $S$ with constants from $X$}, by the following clauses:
\begin{enumerate}[leftmargin=*]
\item For each sort $S \in \calS$ we have $X_S \subseteq \Term_\Sigma(X)_S$. 

\item Given $\sigma \in \Sigma$ of type $((S_1, \ldots, S_n), S, 1)$ and $t_i \in \Term_\Sigma(X)_{S_i}$ for all $1 \leq i \leq n$, we have $\sigma(t_1, \ldots, t_n) \in \Term_\Sigma(X)_S$. 
\end{enumerate}
We then have the $\calS$-sorted set $\Term_\Sigma(X) := \left(\Term_\Sigma(X)_S\right)_{S \in \calS}$ of \emph{ground $\Sigma$-terms with constants from $X$}, which is the carrier of the free $\Sigma$-algebra $F^\Sigma X$ on the $\calS$-sorted set $X$. For each $\sigma \in \Sigma$ of type $((S_1, \ldots, S_n), S, 1)$, we define the function \[ \sigma^{F^\Sigma X} : \Term_\Sigma(X)_{S_1} \times \ldots \times \Term_\Sigma(X)_{S_n} \to \Term_\Sigma(X)_S \]
\[ (t_1, \ldots, t_n) \mapsto \sigma(t_1, \ldots, t_n). \] These data yield a $\Sigma$-algebra $F^\Sigma X$, which is equipped with a canonical $\calS$-sorted function $\eta = \eta_X^\Sigma : X \to U^\Sigma F^\Sigma X = \Term_\Sigma(X)$ given at each sort $S \in \calS$ by the inclusion function $\eta_S : X_S \hookrightarrow \Term_\Sigma(X)_S$, and $\eta$ exhibits $F^\Sigma X$ as the free $\Sigma$-algebra on $X$.             
\end{para_sub}

\subsection{Free algebras of enriched multi-sorted signatures: the general case}
\label{sig_alg_subsection_gen}

For each $\V$-enriched $\calS$-sorted signature $\Sigma$ and each $\calS$-sorted object $X$ of $\V$, we write $X\downarrow U^\Sigma$ for the family consisting of all pairs $\left(A, f : X \to U^\Sigma A\right)$ consisting of a $\Sigma$-algebra $A$ and a morphism $f : X \to U^\Sigma A = A$ of $\V^\calS$. Given such a pair $\left(A, f : X \to U^\Sigma A\right)$, we write \[ f^\sharp : F^{|\Sigma|} |X| \to |A|^\Sigma \] for the unique $|\Sigma|$-algebra morphism corresponding to the $\calS$-sorted function $f : |X| \to \left|U^\Sigma A\right| = U^{|\Sigma|}|A|^{\Sigma}$ under the adjunction $F^{|\Sigma|} \dashv U^{|\Sigma|} : |\Sigma|\Alg \to \Set^\calS$ of \ref{free_sig_alg_Set}. Note that by \ref{free_sig_alg_Set}, we may also regard $f^\sharp$ as (just) an $\calS$-sorted function $f^\sharp : \Term_{|\Sigma|}(|X|) \to |A|$.     

From Proposition \ref{taut_lift_hyp}, \ref{free_sig_alg_Set}, and Wyler's taut lift theorem \cite[Theorem 21.28]{AHS}, we now immediately obtain the following result, which was previously (only) established in the special case where $\Sigma$ is ordinary and single-sorted (see e.g.~\cite[Theorem 2.3]{Porst_existence}). 

\begin{theo_sub}
\label{taut_sig_thm}
Let $\Sigma$ be a $\V$-enriched $\calS$-sorted signature with underlying classical $\calS$-sorted signature $|\Sigma|$ \eqref{ord_sig}. Then $U^\Sigma : \Sigma\Alg \to \V^\calS$ has a left adjoint $F^\Sigma : \V^\calS \to \Sigma\Alg$, and the resulting adjunction $F^\Sigma \dashv U^\Sigma : \Sigma\Alg \to \V^\calS$ is a lifting of the adjunction\footnote{In the precise sense of \cite[Definition 21.26]{AHS}.} $F^{|\Sigma|} \dashv U^{|\Sigma|} : |\Sigma|\Alg \to \Set^\calS$. In particular, the following square strictly commutes:
\begin{equation}
\label{sig_adjoint_square}
\begin{tikzcd}
	\V^\calS && {\Sigma\Alg} \\
	\\
	{\Set^\calS} && {|\Sigma|\Alg}.
	\arrow["{F^\Sigma}", from=1-1, to=1-3]
	\arrow["{|-|^\Sigma}", from=1-3, to=3-3]
	\arrow["{|-|^\calS}"', from=1-1, to=3-1]
	\arrow["{F^{|\Sigma|}}"', from=3-1, to=3-3]
\end{tikzcd}
\end{equation}
For each $\calS$-sorted object $X$ of $\V$, the free $\Sigma$-algebra $F^\Sigma X$ on $X$ is the domain of the initial lift of the $|-|^\Sigma$-structured source
\begin{equation}\label{free_sig_eqn} 
\left(f^\sharp : F^{|\Sigma|}|X| \to |A|^\Sigma\right)_{(A, f) \in X \downarrow U^\Sigma}.
\end{equation}
In particular, the carrier object $U^\Sigma F^\Sigma X$ of $\V^\calS$ is the domain of the initial lift of the $|-|^\calS$-structured source
\begin{equation}\label{free_sig_eqn_2} 
\left(f^\sharp : \Term_{|\Sigma|}(|X|) \to |A|^\calS\right)_{(A, f) \in X \downarrow U^\Sigma}.
\end{equation} 
\end{theo_sub}

\begin{para_sub}
\label{free_sig_para}
In the setting of Theorem \ref{taut_sig_thm}, note that the free $\Sigma$-algebra $F^\Sigma X$ on an object $X$ of $\V^\calS$ lies in the fibre of $\Sigma\Alg$ over the free $|\Sigma|$-algebra $F^{|\Sigma|}(|X|)$ on the underlying object $|X| = |X|^\calS$ of $\Set^\calS$. Recalling the explicit description of $F^{|\Sigma|}(|X|)$ from \ref{free_sig_alg_Set}, this means that the carrier object $U^\Sigma F^\Sigma X$ lies in the fibre of $\V^\calS$ over the $\calS$-sorted set $\Term_{|\Sigma|}(|X|)$ of ground $|\Sigma|$-terms with constants from $|X|$. Moreover, in view of \ref{ord_sig_para}, the following hold:
\begin{enumerate}[leftmargin=*]
\item The $\calS$-sorted inclusion function 
\[ \eta = \eta_{|X|}^{|\Sigma|} : |X| \to \Term_{|\Sigma|}(|X|) = \left|U^\Sigma F^\Sigma X\right| \] of \ref{free_sig_alg_Set} is a $\V^\calS$-admissible morphism $\eta = \eta_X^\Sigma : X \to U^\Sigma F^\Sigma X$.  
\item For each $\sigma \in \Sigma$ of type $((S_1, \ldots, S_n), S, P)$, the function
\[ \widehat{\sigma} : |P| \times \Term_{|\Sigma|}(|X|)_{S_1} \times \ldots \times \Term_{|\Sigma|}(|X|)_{S_n} \to \Term_{|\Sigma|}(|X|)_S \]
\[ (p, t_1, \ldots, t_n) \mapsto \sigma_p(t_1, \ldots, t_n) \] 
which may be equivalently written as
\[ \widehat{\sigma} : \left|P \tensor \left(\left(U^\Sigma F^\Sigma X\right)_{S_1} \times \ldots \times \left(U^\Sigma F^\Sigma X\right)_{S_n} \right)\right| = |P| \times \left|\left(U^\Sigma F^\Sigma X\right)_{S_1} \right| \times \ldots \times \left|\left(U^\Sigma F^\Sigma X\right)_{S_n} \right| \] \[ \to \left|\left(U^\Sigma F^\Sigma X\right)_{S} \right|, \] is a $\V$-admissible morphism \[ \widehat{\sigma} : P \tensor \left(\left(U^\Sigma F^\Sigma X\right)_{S_1}  \times \ldots \times \left(U^\Sigma F^\Sigma X\right)_{S_n} \right) \to \left(U^\Sigma F^\Sigma X\right)_{S}, \] and $\sigma^{F^\Sigma X} = \widehat{\sigma}$. 
\end{enumerate}
So $F^\Sigma X$ is determined by its carrier object $U^\Sigma F^\Sigma X$. While Theorem \ref{taut_sig_thm} provides a description of this carrier object, we shall provide more explicit and concrete descriptions of this carrier object in Theorem \ref{explicit_free_fibre_prop} and (especially) Theorem \ref{free_sig_alg_cc}. 
\end{para_sub}

\begin{defn_sub}
\label{Sigma_compatible}
Let $\Sigma$ be a $\V$-enriched $\calS$-sorted signature with underlying classical $\calS$-sorted signature $|\Sigma|$ \eqref{ord_sig}, and let $X = \left(X_S\right)_{S \in \calS}$ be an $\calS$-sorted object of $\V$. Given an $\calS$-sorted object $B$ of $\V$ that lies in the fibre over the $\calS$-sorted set $\Term_{|\Sigma|}(|X|)$ of ground $|\Sigma|$-terms with constants from $|X|$ \eqref{free_sig_alg_Set}, we say that $B$ is \textbf{$\Sigma$-compatible with $X$} if the following conditions are satisfied:
\begin{enumerate}[leftmargin=*]
\item The $\calS$-sorted inclusion function 
\[ \eta = \eta_{|X|}^{|\Sigma|} : |X| \to \Term_{|\Sigma|}(|X|) = |B| \] is a $\V^\calS$-admissible morphism $\eta : X \to B$. In other words, for each sort $S \in \calS$, the inclusion function $\eta_S : \left|X_S\right| \hookrightarrow \Term_{|\Sigma|}(|X|)_S = \left|B_S\right|$ is a $\V$-admissible morphism $\eta_S : X_S \to B_S$.  
\item For each $\sigma \in \Sigma$ of type $((S_1, \ldots, S_n), S, P)$, the function
\[ \widehat{\sigma} : |P| \times \Term_{|\Sigma|}(|X|)_{S_1} \times \ldots \times \Term_{|\Sigma|}(|X|)_{S_n} \to \Term_{|\Sigma|}(|X|)_S \]
\[ (p, t_1, \ldots, t_n) \mapsto \sigma_p(t_1, \ldots, t_n), \] 
which may be equivalently written as
\[ \widehat{\sigma} : \left|P \tensor \left(B_{S_1} \times \ldots \times B_{S_n}\right)\right| = |P| \times \left|B_{S_1}\right| \times \ldots \times \left|B_{S_n}\right| \to \left|B_{S}\right|, \] is a $\V$-admissible morphism $\widehat{\sigma} : P \tensor \left(B_{S_1} \times \ldots \times B_{S_n}\right) \to B_S$. 
\end{enumerate} 
If $B$ is $\Sigma$-compatible with $X$, then $B$ can be equipped with the structure of a $\Sigma$-algebra, where $\sigma^B = \widehat{\sigma}$ for each $\sigma \in \Sigma$. 
\end{defn_sub}

\begin{lem_sub}
\label{Sigma_compatible_lem}
Let $\Sigma$ be a $\V$-enriched $\calS$-sorted signature with underlying classical $\calS$-sorted signature $|\Sigma|$ \eqref{ord_sig}, and let $X = \left(X_S\right)_{S \in \calS}$ be an $\calS$-sorted object of $\V$. Given a pair $\left(A, f : X \to U^\Sigma A = A\right)$ in $X \downarrow U^\Sigma$, we write $A_f = \left(A_{f, S}\right)_{S \in \calS}$ for the domain of the initial lift of the $|-|^\calS$-structured morphism $f^\sharp : \Term_{|\Sigma|}(|X|) \to |A|^\calS$, so that $A_f$ is an element of the fibre over $\Term_{|\Sigma|}(|X|)$ and $f^\sharp : A_f \to A$ is an initial morphism of $\V^\calS$. Then $A_f$ is $\Sigma$-compatible with $X$.
\end{lem_sub}

\begin{proof}
The $\calS$-sorted inclusion function $\eta = \eta_{|X|}^{|\Sigma|} : |X| \to \Term_{|\Sigma|}(|X|) = \left|A_f\right|$ is a $\V^\calS$-admissible morphism $\eta : X \to A_f$ because $f^\sharp : A_f \to A$ is $|-|^\calS$-initial and $f^\sharp \circ \eta = f : |X|^\calS \to |A|^\calS$, which is $\V^\calS$-admissible. Given $\sigma \in \Sigma$ of type $((S_1, \ldots, S_n), S, P)$, we must show that the function
\[ \widehat{\sigma} : \left|P \tensor \left(A_{f, S_1} \times \ldots \times A_{f, S_n}\right)\right| = |P| \times \left|A_{f, S_1}\right| \times \ldots \times \left|A_{f, S_n}\right| \to \left|A_{f, S}\right| \]
\[ (p, t_1, \ldots, t_n) \mapsto \sigma_p(t_1, \ldots, t_n) \] is a $\V$-admissible morphism $\widehat{\sigma} : P \tensor \left(A_{f, S_1} \times \ldots \times A_{f, S_n}\right) \to A_{f, S}$. The $\V$-morphism $f^\sharp_S : A_{f, S} \to A_S$ is $|-|$-initial (by \ref{prod_top_para}) and $f^\sharp_S \circ \widehat{\sigma} = f^\sharp_S \circ \sigma^{F^\Sigma X} = \sigma^A \circ \left(1_P \tensor \left(f^\sharp_{S_1} \times \ldots \times f^\sharp_{S_n}\right)\right)$ by \ref{free_sig_para} and the fact that $f^\sharp : F^\Sigma X \to A$ is a morphism of $\Sigma$-algebras (since it is a morphism $f^\sharp : \left|F^\Sigma X\right|^\Sigma = F^{|\Sigma|}|X| \to |A|^\Sigma$ of the underlying $|\Sigma|$-algebras), whence the desired result follows because the rightmost composite is $\V$-admissible.  
\end{proof}

We now provide the following slightly more explicit description (compared to that given in Theorem \ref{taut_sig_thm}) of the left adjoint $F^\Sigma : \V^\calS \to \Sigma\Alg$ for a $\V$-enriched $\calS$-sorted signature $\Sigma$. The special case where $\Sigma$ is ordinary and single-sorted appears (for example) on \cite[Page 439]{Porst_cc}, while the special case where $\Sigma$ is single-sorted (but not necessarily ordinary) and $\V = \Top$ (see Examples \ref{examples_ex} and \ref{top_ex})  appears in \cite[Lemma 4.4]{Battenfeld_comp}.

\begin{theo_sub}
\label{explicit_free_fibre_prop}
Let $\Sigma$ be a $\V$-enriched $\calS$-sorted signature with underlying classical $\calS$-sorted signature $|\Sigma|$ \eqref{ord_sig}, and let $X$ be an $\calS$-sorted object of $\V$. Then the $\calS$-sorted carrier object $U^\Sigma F^\Sigma X \in \ob\V^\calS$ of the free $\Sigma$-algebra $F^\Sigma X$ on $X$ is the infimum in the fibre over the $\calS$-sorted set $U^{|\Sigma|} F^{|\Sigma|} |X| = \Term_{|\Sigma|}(|X|)$ \eqref{free_sig_alg_Set} of all the elements that are $\Sigma$-compatible with $X$ \eqref{Sigma_compatible}:
\[ U^\Sigma F^\Sigma X = \bigwedge\left\{B \in \Fib\left(\Term_{|\Sigma|}(|X|)\right) \mid B \text{ is } \Sigma\text{-compatible with } X\right\}. \]   
\end{theo_sub}

\begin{proof}
We know from Theorem \ref{taut_sig_thm} that the carrier object $U^\Sigma F^\Sigma X$ of $\V^\calS$ is the domain of the initial lift of the $|-|^\calS$-structured source
\[ \left(f^\sharp : \Term_{|\Sigma|}(|X|) \to |A|^\calS\right)_{(A, f) \in X \downarrow U^\Sigma}. \] Given a pair $\left(A, f : X \to U^\Sigma A = A\right)$ in $X \downarrow U^\Sigma$, recall from \ref{Sigma_compatible_lem} that we write $A_f$ for the domain of the initial lift of the $|-|^\calS$-structured morphism $f^\sharp : \Term_{|\Sigma|}(|X|) \to |A|^\calS$, so that $A_f$ is an element of the fibre over $\Term_{|\Sigma|}(|X|)$. Then (by \ref{fibre_para}) the carrier object $U^\Sigma F^\Sigma X$ of $\V^\calS$ is equal to the infimum $\bigwedge_{(A, f) \in X\downarrow U^\Sigma} A_f$ in the fibre over $\Term_{|\Sigma|}(|X|)$, so we are reduced to showing that
\[ \bigwedge_{(A, f) \in X\downarrow U^\Sigma} A_f = \bigwedge\left\{B \in \Fib\left(\Term_{|\Sigma|}(|X|)\right) \mid B \text{ is } \Sigma\text{-compatible with } X\right\} \] in $\Fib\left(\Term_{|\Sigma|}(|X|)\right)$. Given $(A, f)$ in $X \downarrow U^\Sigma$, we know from Lemma \ref{Sigma_compatible_lem} that $A_f$ is $\Sigma$-compatible with $X$, which yields the $\geq$ inequality. To show the $\leq$ inequality, it suffices to show for each $B \in \Fib\left(\Term_{|\Sigma|}(|X|)\right)$ that is $\Sigma$-compatible with $X$ that there is some pair $\left(C, f : X \to U^\Sigma C = C\right)$ in $X \downarrow U^\Sigma$ such that $C_f \leq B$ in $\Fib\left(\Term_{|\Sigma|}(|X|)\right)$. Since $B$ is $\Sigma$-compatible with $X$, it follows from \ref{Sigma_compatible} that $B$ can be equipped with the structure of a $\Sigma$-algebra, where $\sigma^B = \widehat{\sigma}$ for each $\sigma \in \Sigma$, and $\eta : X \to B = U^\Sigma B$ is $\V^\calS$-admissible. Then we have $(B, \eta : X \to B)$ in $X \downarrow U^\Sigma$ with $B_\eta \leq B$ in $\Fib\left(\Term_{|\Sigma|}(|X|)\right)$, because $B_\eta$ is the domain of the initial lift of $\eta^\sharp = 1 : \Term_{|\Sigma|}(|X|) \to \Term_{|\Sigma|}(|X|) = |B|^\calS$.   
\end{proof}

\noindent The following corollary is the single-sorted version of Theorem \ref{explicit_free_fibre_prop}.

\begin{cor_sub}
\label{explicit_free_fibre_prop_SS}
Let $\Sigma$ be a $\V$-enriched single-sorted signature with underlying classical single-sorted signature $|\Sigma|$ \eqref{ord_sig}, and let $X$ be an object of $\V$. Then the carrier object $U^\Sigma F^\Sigma X \in \ob\V$ of the free $\Sigma$-algebra $F^\Sigma X$ on $X$ is the infimum in the fibre over the set $U^{|\Sigma|} F^{|\Sigma|} |X| = \Term_{|\Sigma|}(|X|)$ of all the elements that are $\Sigma$-compatible with $X$ \eqref{Sigma_compatible}, i.e.~all the elements $B \in \Fib\left(\Term_{|\Sigma|}(|X|)\right)$ that satisfy the following conditions:
\begin{enumerate}[leftmargin=*]
\item The inclusion function 
\[ \eta = \eta_{|X|}^{|\Sigma|} : |X| \to \Term_{|\Sigma|}(|X|) = |B| \] is a $\V$-admissible morphism $\eta : X \to B$. 
\item For each $\sigma \in \Sigma$ of arity $n \geq 0$ \eqref{signature} and parameter object $P$, the function
\[ \widehat{\sigma} : |P| \times \Term_{|\Sigma|}(|X|)^n \to \Term_{|\Sigma|}(|X|) \]
\[ (p, t_1, \ldots, t_n) \mapsto \sigma_p(t_1, \ldots, t_n), \]
which may be equivalently written as
\[ \widehat{\sigma} : \left|P \tensor B^n\right| = |P| \times |B|^n \to \left|B\right|, \] is a $\V$-admissible morphism $\widehat{\sigma} : P \tensor B^n \to B$. \qed 
\end{enumerate}  
\end{cor_sub}

\subsection{Free algebras of enriched multi-sorted signatures: the case where $\V$ is cartesian closed}
\label{sig_alg_subsection_cc}

Given a $\V$-enriched $\calS$-sorted signature $\Sigma$ and an $\calS$-sorted object $X$ of $\V$, the descriptions of the free $\Sigma$-algebra $F^\Sigma X$ on $X$ provided by Theorems \ref{taut_sig_thm} and \ref{explicit_free_fibre_prop} are not as concrete or intrinsic as one might ideally want, since they ultimately refer not only to $X$ and $\Sigma$, but also (in some way) to the entire category $\Sigma\Alg$. Under the assumption that $\V$ is equipped with the \emph{cartesian} symmetric monoidal structure and is moreover cartesian closed, we shall provide a more concrete and intrinsic \emph{(countably) inductive} description of $F^\Sigma X$ in Theorem \ref{free_sig_alg_cc} below. (But we do not make the blanket assumption within this subsection that $\V$ is cartesian closed.)

\begin{para_sub}[\textbf{Final epi-sinks, coproducts, and quotient morphisms}]
\label{epi_coprod_para}
Let $\left(g_i : V_i \to V\right)_{i \in I}$ be a small sink in $\V$. We say that $\left(g_i : V_i \to V\right)_{i \in I}$ is an \emph{epi-sink} if the underlying sink $\left(g_i : |V_i| \to |V|\right)_{i \in I}$ in $\Set$ is jointly surjective, and we say that $\left(g_i : V_i \to V\right)_{i \in I}$ is a \emph{final epi-sink} if it is both final and an epi-sink. Since $|-| : \V \to \Set$ strictly preserves small coproducts \eqref{top_para}, one readily sees that $\left(g_i : V_i \to V\right)_{i \in I}$ is a final epi-sink iff the induced morphism $\left[g_i\right]_i : \coprod_i V_i \to V$ from the small coproduct $\coprod_i V_i$ is a final epimorphism, i.e.~a quotient morphism (noting that the epimorphisms of $\V$ are precisely the surjective morphisms). Recall from \cite[Proposition 21.13]{AHS} that the quotient morphisms of $\V$ are precisely the regular epimorphisms of $\V$.
\end{para_sub}

\begin{lem_sub}
\label{cc_pres_joins}
Let $P$ be an object of $\V$, and suppose that the product endofunctor $P \times (-) : \V \to \V$ preserves small colimits. Then for each set $X$, the monotone function 
\[ P \times (-) : \Fib(X) \to \Fib(|P| \times X) \] 
\[ A \mapsto P \times A \]
preserves small suprema.  
\end{lem_sub}

\begin{proof}
Let $(A_i)_{i \in I}$ be a small family of objects in $\Fib(X)$. Recall from \ref{fibre_para} that the supremum $\bigvee_i A_i$ in $\Fib(X)$ is the codomain of the final lift of the $|-|$-structured sink of identity functions $\left(\id_i : |A_i| = X \to X\right)_{i \in I}$. So $\left(\id_i : A_i \to \bigvee_i A_i\right)_{i \in I}$ is a final epi-sink in $\V$, and hence (by \ref{epi_coprod_para}) the induced morphism $\left[\id_i\right]_i : \coprod_i A_i \to \bigvee_i A_i$ is a quotient morphism. The endofunctor $P \times (-) : \V \to \V$ preserves regular epimorphisms and thus quotient morphisms, so that $P \times \left[\id_i\right]_i : P \times \coprod_i A_i \to P \times \bigvee_i A_i$ is a quotient morphism. The endofunctor $P \times (-) : \V \to \V$ also preserves small coproducts, so that the morphism $\left[1_P \times \id_i\right]_i : \coprod_i P \times A_i \to P \times \bigvee_i A_i$ is a quotient morphism, i.e.~a final epimorphism. This implies (by \ref{epi_coprod_para}) that $\left(1_P \times \id_i : P \times A_i \to P \times \bigvee_i A_i\right)_{i \in I}$ is a final epi-sink, which readily entails (by \ref{fibre_para}) that $P \times \bigvee_i A_i = \bigvee_i P \times A_i$ in $\Fib(|P| \times X)$, as desired.   
\end{proof}

\begin{lem_sub}
\label{join_ineq_lem}
Suppose that $\V$ is cartesian closed. Let $P$ be an object of $\V$, let $n \geq 1$, and let $X_1, \ldots, X_n$ be sets. For each $1 \leq i \leq n$, let $\left(A^m_i\right)_{m \geq 0} \in \Fib(X_i)$ be a countable increasing sequence (i.e.~$A_i^m \leq A_i^{m+1}$ for all $m \geq 0$). Then 
\begin{equation}
\label{join_eq}
P \times \left(\bigvee_{m \geq 0} A_1^m\right) \times \ldots \times \left(\bigvee_{m \geq 0} A^m_n\right) = \bigvee_{m \geq 0} \left(P \times A_1^m \times \ldots \times A^m_n\right)
\end{equation}
in $\Fib(|P| \times X_1 \times \ldots \times X_n)$. 
\end{lem_sub}

\begin{proof}
The cartesian closure of $\V$ means that for each object $X$ of $\V$, the product endofunctor $X \times (-) : \V \to \V$ is a left adjoint and thus preserves small colimits. We prove the non-trivial inequality $\leq$ in \eqref{join_eq} by induction on $n \geq 1$. For $n = 1$, this (in)equality holds because $P \times (-) : \V \to \V$ preserves small suprema by Lemma \ref{cc_pres_joins}. Now suppose that the equality \eqref{join_eq} holds for $n \geq 1$, and let us show that  
\[ P \times \left(\bigvee_{m \geq 0} A_1^m\right) \times \ldots \times \left(\bigvee_{m \geq 0} A^m_n\right) \times \left(\bigvee_{m \geq 0} A^m_{n+1}\right) \leq \bigvee_{m \geq 0} \left(P \times A_1^m \times \ldots \times A^m_n \times A^m_{n+1}\right). \] Given the induction hypothesis, it is equivalent to show that
\[ \bigvee_{m \geq 0} \left(P \times A_1^m \times \ldots \times A^m_n\right) \times \left(\bigvee_{m \geq 0} A^m_{n+1}\right) \leq \bigvee_{m \geq 0} \left(P \times A_1^m \times \ldots \times A^m_n \times A^m_{n+1}\right). \] By Lemma \ref{cc_pres_joins} applied to $\bigvee_{m \geq 0} \left(P \times A_1^m \times \ldots \times A^m_n\right)$, we have
\[ \bigvee_{m \geq 0} \left(P \times A_1^m \times \ldots \times A^m_n\right) \times \left(\bigvee_{m \geq 0} A^m_{n+1}\right) = \bigvee_{m \geq 0} \left(P \times A_1^m \times \ldots \times A^m_n \times \left(\bigvee_{j \geq 0} A^j_{n+1}\right)\right). \] And then by Lemma \ref{cc_pres_joins} applied to $P \times A_1^m \times \ldots \times A^m_n$ ($m \geq 0$), we deduce that
\[ \bigvee_{m \geq 0} \left(P \times A_1^m \times \ldots \times A^m_n\right) \times \left(\bigvee_{m \geq 0} A^m_{n+1}\right) = \bigvee_{m \geq 0}\left(\bigvee_{j \geq 0} \left(P \times A_1^m \times \ldots \times A_n^m \times A_{n+1}^j\right)\right). \] So it is finally equivalent to show that
\[ \bigvee_{m \geq 0}\left(\bigvee_{j \geq 0} \left(P \times A_1^m \times \ldots \times A_n^m \times A_{n+1}^j\right)\right) \leq \bigvee_{m \geq 0} \left(P \times A_1^m \times \ldots \times A^m_n \times A^m_{n+1}\right), \] i.e.~that for all $m, j \geq 0$ we have 
\begin{equation}
\label{join_eq_2}
P \times A_1^m \times \ldots \times A_n^m \times A_{n+1}^j \leq \bigvee_{m \geq 0} \left(P \times A_1^m \times \ldots \times A^m_n \times A^m_{n+1}\right).
\end{equation} 
Taking $p := \max\{j, m\}$, we have $A_i^m \leq A_i^p$ for each $1 \leq i \leq n$ and $A_{n+1}^j \leq A_{n+1}^p$, so that
\[ P \times A_1^m \times \ldots \times A_n^m \times A_{n+1}^j \leq P \times A_1^p \times \ldots \times A^p_n \times A^p_{n+1}, \] which yields the desired inequality \eqref{join_eq_2}. 
\end{proof}

\noindent We now have the following theorem. In the Appendix (\S\ref{appendix}), we provide explicit descriptions of the main ingredients used in this result (suprema and final structures) for all of the examples of (cartesian closed) topological categories over $\Set$ that we considered in Example \ref{examples_ex}.

\begin{theo_sub}
\label{free_sig_alg_cc}
Suppose that $(\V, \tensor, I) = (\V, \times, 1)$ and that $\V$ is cartesian closed. Let $\Sigma$ be a $\V$-enriched $\calS$-sorted signature  with underlying classical $\calS$-sorted signature $|\Sigma|$ \eqref{ord_sig}, and let $X$ be an $\calS$-sorted object of $\V$. Then the $\calS$-sorted carrier object $U^\Sigma F^\Sigma X \in \ob\V^\calS$ of the free $\Sigma$-algebra $F^\Sigma X$ on $X$ is the supremum \[ U^\Sigma F^\Sigma X = \bigvee_n \Omega^n(X) \] in the fibre over the $\calS$-sorted set $\Term_{|\Sigma|}(|X|)$ of the following $\calS$-sorted objects $\Omega^n(X)$ of $\V$ ($n \geq 0$):
\begin{enumerate}[leftmargin=*]
\item $\Omega^0(X)$ is the final $\V^\calS$-structure on $\Term_{|\Sigma|}(|X|)$ induced by the $\calS$-sorted inclusion function $\eta = \eta_{|X|}^{|\Sigma|} : |X| \to \Term_{|\Sigma|}(|X|)$. 

\item Given $\Omega^n(X) \in \Fib\left(\Term_{|\Sigma|}(|X|)\right)$ ($n \geq 0$), we first define, for each sort $S \in \calS$, an element $\Omega^n(X, S) \in \Fib\left(\Term_{|\Sigma|}(|X|)_S\right)$ as the final $\V$-structure on the set $\Term_{|\Sigma|}(|X|)_S$ induced by the functions
\[ \widehat{\sigma} : \left|P \times \Omega^n(X)_{S_1} \times \ldots \times \Omega^n(X)_{S_m}\right| = |P| \times \left|\Omega^n(X)_{S_1}\right| \times \ldots \times \left|\Omega^n(X)_{S_m}\right| \to \Term_{|\Sigma|}(|X|)_S \]
\[ (p, t_1, \ldots, t_m) \mapsto \sigma_p(t_1, \ldots, t_m) \]
 for all $\sigma \in \Sigma$ of type $((S_1, \ldots, S_m), S, P)$. We thus obtain an $\calS$-sorted object $\left(\Omega^n(X, S)\right)_{S \in \calS}$ of $\V$ in the fibre over the $\calS$-sorted set $\Term_{|\Sigma|}(|X|)$, and we then define \[ \Omega^{n+1}(X) := \Omega^n(X) \vee \left(\Omega^n(X, S)\right)_{S \in \calS} \] in the fibre over the $\calS$-sorted set $\Term_{|\Sigma|}(|X|)$.   
\end{enumerate}     
\end{theo_sub}

\begin{proof}
By Theorem \ref{explicit_free_fibre_prop}, it is equivalent to show that 
\begin{equation}
\label{explicit_eq}
\bigvee_n \Omega^n(X) = \bigwedge\left\{B \in \Fib\left(\Term_{|\Sigma|}(|X|)\right) \mid B \text{ is } \Sigma\text{-compatible with } X\right\}
\end{equation} 
in $\Fib\left(\Term_{|\Sigma|}(|X|)\right)$. To show the $\leq$ inequality in \eqref{explicit_eq}, let $B = \left(B_S\right)_{S \in \calS} \in \Fib\left(\Term_{|\Sigma|}(|X|)\right)$ be $\Sigma$-compatible with $X$, and let us show for each $n \geq 0$ that $\Omega^n(X) \leq B$. We have $\Omega^0(X) \leq B$ immediately from the definition of $\Omega^0(X)$ and the fact that $B$ is $\Sigma$-compatible with $X$. Now supposing for $n \geq 0$ that $\Omega^n(X) \leq B$, let us show that $\Omega^{n+1}(X) \leq B$, for which it suffices to show that $\left(\Omega^n(X, S)\right)_{S \in \calS} \leq B$. For each $S \in \calS$, we must show (in view of \ref{prod_top_para}) that $\Omega^n(X, S) \leq B_S$ in the fibre over the set $\Term_{|\Sigma|}(|X|)_S$. To prove this, it is equivalent (by the definition of $\Omega^n(X, S)$) to show for each $\sigma \in \Sigma$ of type $((S_1, \ldots, S_m), S, P)$ that the function
\[ \widehat{\sigma} : \left|P \times \Omega^n(X)_{S_1} \times \ldots \times \Omega^n(X)_{S_m}\right| = |P| \times \left|\Omega^n(X)_{S_1}\right| \times \ldots \times \left|\Omega^n(X)_{S_m}\right| \to \Term_{|\Sigma|}(|X|)_S = \left|B_S\right| \]
\[ (p, t_1, \ldots, t_m) \mapsto \sigma_p(t_1, \ldots, t_m) \]
 is a $\V$-admissible morphism
\[ \widehat{\sigma} : P \times \Omega^n(X)_{S_1} \times \ldots \times \Omega^n(X)_{S_m} \to B_S. \]
But this is true because $\Omega^n(X)_{S_i} \leq B_{S_i}$ in the fibre over the set $\Term_{|\Sigma|}(|X|)_{S_i}$ for each $1 \leq i \leq m$ (by the induction hypothesis) and because
the function
\[ \widehat{\sigma} : \left|P \times B_{S_1} \times \ldots \times B_{S_m}\right| = |P| \times \left|B_{S_1}\right| \times \ldots \times \left|B_{S_m}\right| \to \Term_{|\Sigma|}(|X|)_S = \left|B_S\right| \]
is a $\V$-admissible morphism
\[ \widehat{\sigma} : P \times B_{S_1} \times \ldots \times B_{S_m} \to B_S \] (since $B$ is $\Sigma$-compatible with $X$). This establishes the $\leq$ inequality in \eqref{explicit_eq}.        

To show the converse inequality $\geq$ in \eqref{explicit_eq}, it suffices to show that $\bigvee_n \Omega^n(X)$ is $\Sigma$-compatible with $X$. That $\bigvee_n \Omega^n(X)$ satisfies condition (1) of Definition \ref{Sigma_compatible} is because $\Omega^0(X)$ satisfies condition (1) and $\Omega^0(X) \leq \bigvee_n \Omega^n(X)$. To show that $\bigvee_n \Omega^n(X)$ satisfies condition (2) of Definition \ref{Sigma_compatible}, let $\sigma \in \Sigma$ have type $((S_1, \ldots, S_m), S, P)$ with $m \geq 1$ (we consider the case $m = 0$ after), and let us show that the function
\[ \widehat{\sigma} : |P| \times \left|\left(\bigvee_n \Omega^n(X)\right)_{S_1}\right| \times \ldots \times \left|\left(\bigvee_n \Omega^n(X)\right)_{S_m}\right| \to \left|\left(\bigvee_n \Omega^n(X)\right)_S\right| \]
\[ (p, t_1, \ldots, t_m) \mapsto \sigma_p(t_1, \ldots, t_m) \] is a $\V$-admissible morphism
\[ \widehat{\sigma} : P \times \left(\bigvee_n \Omega^n(X)\right)_{S_1} \times \ldots \times \left(\bigvee_n \Omega^n(X)\right)_{S_m} \to \left(\bigvee_n \Omega^n(X)\right)_S. \]
For each $n \geq 0$, the function
\[ \widehat{\sigma} : |P| \times \left|\Omega^n(X)_{S_1}\right| \times \ldots \times \left|\Omega^n(X)_{S_m}\right| \to \left|\Omega^{n+1}(X)_{S}\right| \] (note the difference in superscripts) is a $\V$-admissible morphism
\[ \widehat{\sigma} : P \times \Omega^n(X)_{S_1} \times \ldots \times \Omega^n(X)_{S_m} \to \Omega^{n+1}(X)_{S} \] by the definition of $\Omega^{n+1}(X)$. Recalling that suprema in $\V^\calS$-fibres are formed pointwise \eqref{prod_top_para}, it then readily follows that
\[ \widehat{\sigma} : \bigvee_n \left(P \times \Omega^n(X)_{S_1} \times \ldots \times \Omega^n(X)_{S_m}\right) \to \left(\bigvee_n \Omega^n(X)\right)_S = \bigvee_n \Omega^n(X)_S \] is $\V$-admissible. Now  
\[ P \times \left(\bigvee_n \Omega^n(X)\right)_{S_1} \times \ldots \times \left(\bigvee_n \Omega^n(X)\right)_{S_m} = P \times \bigvee_n \Omega^n(X)_{S_1} \times \ldots \times \bigvee_n \Omega^n(X)_{S_m}, \] and 
\[ P \times \bigvee_n \Omega^n(X)_{S_1} \times \ldots \times \bigvee_n \Omega^n(X)_{S_m} = \bigvee_n \left(P \times \Omega^n(X)_{S_1} \times \ldots \times \Omega^n(X)_{S_m}\right) \] by Lemma \ref{join_ineq_lem}, since $\V$ is cartesian closed and each sequence $\left(\Omega^n(X)_{S_i}\right)_{n \geq 0} \in \Fib\left(\Term_{|\Sigma|}(|X|)_{S_i}\right)$ ($1 \leq i \leq m$) is increasing. This proves the desired result for $m \geq 1$. For $m = 0$, we must show that the function
\[ \widehat{\sigma} : |P| \to \left|\left(\bigvee_n \Omega^n(X)\right)_S\right| \] \[ p \mapsto \sigma_p \] is $\V$-admissible; but this follows because $\Omega^1(X)_S \leq \left(\bigvee_n \Omega^n(X)\right)_S$ and the function $\widehat{\sigma} : |P| \to \left|\Omega^1(X)_S\right|$ is $\V$-admissible by the definition of $\Omega^1(X)$. This completes the proof of \eqref{explicit_eq}.    
\end{proof}

\noindent The following corollary is the single-sorted version of Theorem \ref{free_sig_alg_cc}. The special case where $\Sigma$ is ordinary and single-sorted essentially appears as \cite[Theorem 1.2]{Porst_cc} (see also \cite[Theorem 3.3]{Porst_existence} for the special case where $\V$ is the cartesian closed category  $\CGTop$ of compactly generated topological spaces, \ref{examples_ex}).

\begin{cor_sub}
\label{free_sig_alg_cc_SS}
Suppose that $(\V, \tensor, I) = (\V, \times, 1)$ and that $\V$ is cartesian closed. Let $\Sigma$ be a $\V$-enriched single-sorted signature with underlying classical single-sorted signature $|\Sigma|$ \eqref{ord_sig}, and let $X$ be an object of $\V$. Then the carrier object $U^\Sigma F^\Sigma X \in \ob\V$ of the free $\Sigma$-algebra $F^\Sigma X$ on $X$ is the supremum \[ U^\Sigma F^\Sigma X = \bigvee_n \Omega^n(X) \] in the fibre over the set $\Term_{|\Sigma|}(|X|)$ of the following objects $\Omega^n(X)$ of $\V$ ($n \geq 0$):
\begin{enumerate}[leftmargin=*]
\item $\Omega^0(X)$ is the final $\V$-structure on the set $\Term_{|\Sigma|}(|X|)$ induced by the inclusion function $\eta = \eta_{|X|}^{|\Sigma|} : |X| \to \Term_{|\Sigma|}(|X|)$. 

\item Given $\Omega^n(X) \in \Fib\left(\Term_{|\Sigma|}(|X|)\right)$ ($n \geq 0$), we first define $\Omega^n(X, \Sigma)$ to be the final $\V$-structure on the set $\Term_{|\Sigma|}(|X|)$ induced by the functions
\[ \widehat{\sigma} : |P| \times \left|\Omega^n(X)\right|^m \to \Term_{|\Sigma|}(|X|) \] 
\[ (p, t_1, \ldots, t_m) \mapsto \sigma_p(t_1, \ldots, t_m) \]
for all $\sigma \in \Sigma$ of arity $m \geq 0$ and parameter $P$. We then define \[ \Omega^{n+1}(X) := \Omega^n(X) \vee \Omega^n(X, \Sigma) \] in the fibre over the set $\Term_{|\Sigma|}(|X|)$.  \qed 
\end{enumerate}     
\end{cor_sub}

\section{Free algebras of enriched multi-sorted equational theories}
\label{theories_section}

As in \S\ref{first_section}, we continue to fix a set $\calS$ of sorts, and a topological category $\V$ over $\Set$ such that $\V = (\V, \tensor, I)$ is also a symmetric monoidal category and the given topological functor $|-| : \V \to \Set$ is strict symmetric monoidal (with respect to the cartesian symmetric monoidal structure on $\Set$). 

We begin this section by defining the notion of \emph{$\V$-enriched $\calS$-sorted equational theory} in Definition \ref{eqn_theory}; we mention related notions of enriched (equational) theory in Remark \ref{alt_th_rmk}. Each $\V$-enriched $\calS$-sorted equational theory $\calT$ has an underlying \emph{classical} $\calS$-sorted equational theory $|\calT|$ (Definition \ref{underlying_trad_theory}), and we show in Theorem \ref{taut_thm} that the adjunction $F^\calT \dashv U^\calT : \calT\Alg \to \V^\calS$ is a lifting of the adjunction $F^{|\calT|} \dashv U^{|\calT|} : |\calT|\Alg \to \Set^\calS$. We study free algebras of $\V$-enriched $\calS$-sorted equational theories in the general case in \S\ref{init_alg_theory_gen_sub}, and in the cartesian closed case in \S\ref{cong_section}.

\subsection{Enriched multi-sorted equational theories}
\label{th_def_subsection}      

\begin{para_sub}[\textbf{The functor $P \tensor U^{\Sbar} : \Sigma\Alg \to \V$}]
\label{forgetful_sort_functor}
Let $\Sigma$ be a $\V$-enriched $\calS$-sorted signature. For each $\Sbar = (S_1, \ldots, S_n) \in \calS^*$, we write $U^{\Sbar} : \Sigma\Alg \to \V$ for the functor defined on objects by $A \mapsto A_{S_1} \times \ldots \times A_{S_n}$ and on morphisms by $f \mapsto f_{S_1} \times \ldots \times f_{S_n}$ (cf.~Remark \ref{enriched_rmk}). In particular, for a single sort $S \in \calS$, we write $U^S := U^{(S)} : \Sigma\Alg \to \V$, which is defined on objects by $A \mapsto A_S$ and on morphisms by $f \mapsto f_S$. For each $P \in \ob\V$ and each $\Sbar \in \calS^*$ we then have the functor $P \tensor U^{\Sbar} : \Sigma\Alg \to \V$ defined as the composite $\Sigma\Alg \xrightarrow{U^{\Sbar}} \V \xrightarrow{P \tensor (-)} \V$, which is therefore given on objects by $A \mapsto P \tensor \left(A_{S_1} \times \ldots \times A_{S_n}\right)$ and on morphisms by $f \mapsto 1_P \tensor \left(f_{S_1} \times \ldots \times f_{S_n}\right)$.
\end{para_sub}   

\begin{defn_sub}
\label{alg_operation}
Let $\Sigma$ be a $\V$-enriched $\calS$-sorted signature. An \textbf{algebraic $\Sigma$-operation} is a natural transformation $\omega : P \tensor U^{\Sbar} \Longrightarrow U^S : \Sigma\Alg \to \V$ for some $P \in \ob\V$, some $\Sbar \in \calS^*$, and some $S \in \calS$. Explicitly, an algebraic $\Sigma$-operation $\omega$ consists of $\V$-morphisms $\omega_A : P \tensor \left(A_{S_1} \times \ldots \times A_{S_n}\right) \to A_S$ ($A \in \ob\Sigma\Alg$) such that the following diagram commutes for each morphism $f : A \to B$ of $\Sigma\Alg$:
\begin{equation}\label{alg_sigma_op_eq}
\begin{tikzcd}
	{P \tensor \left(A_{S_1} \times \ldots \times A_{S_n}\right)} &&& {P \tensor \left(B_{S_1} \times \ldots \times B_{S_n}\right)} \\
	\\
	{A_S} &&& {B_S}.
	\arrow["{1_P \tensor \left(f_{S_1} \times \ldots \times f_{S_n}\right)}", from=1-1, to=1-4]
	\arrow["{\omega_A}"', from=1-1, to=3-1]
	\arrow["{\omega_B}", from=1-4, to=3-4]
	\arrow["{f_S}"', from=3-1, to=3-4]
\end{tikzcd}
\end{equation}
We say that $P$ is the \emph{parameter (object)} of $\omega$, that $\omega$ has \emph{input sorts} $\Sbar$, that $S$ is the \emph{output sort} of $\omega$, and that $\omega$ has \emph{type} $\left(\Sbar, S, P\right)$. An algebraic $\Sigma$-operation $\omega$ is \textbf{ordinary} if its parameter object is the unit object $I$ of the symmetric monoidal category $\V$, in which case we simply write $\omega : U^{\Sbar} \Longrightarrow U^S$ and $\omega_A : A_{S_1} \times \ldots \times A_{S_n} \to A_S$ for each $\Sigma$-algebra $A$.
\end{defn_sub}

\begin{defn_sub}
\label{equation}
Let $\Sigma$ be a $\V$-enriched $\calS$-sorted signature. An \textbf{algebraic $\Sigma$-equation}, written $\omega \doteq \nu$, is a pair of algebraic $\Sigma$-operations $\omega, \nu : P \tensor U^{\Sbar} \Longrightarrow U^S$ of the same type $\left(\Sbar, S, P\right)$. We say that $P$ is the \emph{parameter (object)} of $\omega \doteq \nu$, that $\omega \doteq \nu$ has \emph{input sorts} $\Sbar$, that $S$ is the \emph{output sort} of $\omega \doteq \nu$, and that $\omega \doteq \nu$ has \emph{type} $\left(\Sbar, S, P\right)$. We say that $\omega \doteq \nu$ is \textbf{ordinary} if its parameter object is the unit object $I$ of the symmetric monoidal category $\V$ (i.e.~if $\omega$ and $\nu$ are ordinary). A $\Sigma$-algebra $A$ \textbf{satisfies} an algebraic $\Sigma$-equation $\omega \doteq \nu$ of type $\left(\Sbar, S, P\right)$ if \[ \omega_A = \nu_A : P \tensor \left(A_{S_1} \times \ldots \times A_{S_n}\right) \to A_S. \] 
\end{defn_sub}

\begin{defn_sub}
\label{eqn_theory}
A \textbf{$\V$-enriched $\calS$-sorted equational theory} is a pair $\calT = (\Sigma, \E)$ consisting of a $\V$-enriched $\calS$-sorted signature $\Sigma$ and a set $\E$ of algebraic $\Sigma$-equations. A $\Sigma$-algebra $A$ is a \textbf{$\calT$-model} or \textbf{$\calT$-algebra} or \textbf{model of $\calT$} if it satisfies each algebraic $\Sigma$-equation in $\E$. We write $\calT\Alg$ for the full subcategory of $\Sigma\Alg$ consisting of the $\calT$-algebras, which is equipped with the faithful functor $U^\calT : \calT\Alg \to \V^\calS$ obtained by restricting $U^\Sigma : \Sigma\Alg \to \V^\calS$. A $\V$-enriched $\calS$-sorted equational theory $\calT = (\Sigma, \calE)$ is \textbf{ordinary} if $\Sigma$ is ordinary \eqref{signature} and each algebraic $\Sigma$-equation in $\calE$ is ordinary \eqref{equation}. Given a fixed $\V$-enriched $\calS$-sorted signature $\Sigma$, a \textbf{$\V$-enriched $\calS$-sorted equational theory over $\Sigma$} is a $\V$-enriched $\calS$-sorted equational theory $\calT = (\Sigma, \calE)$ whose first component is $\Sigma$. 

When $\calS$ is a singleton, we refer to a $\V$-enriched $\calS$-sorted equational theory (over $\Sigma$) as a \textbf{$\V$-enriched single-sorted equational theory (over $\Sigma$)}.  
\end{defn_sub}

\begin{rmk_sub}
\label{alt_th_rmk}
In Theorem \ref{equiv_theory_prop} below, we shall prove that there is an equivalent, more \emph{syntactic}, way of describing $\V$-enriched $\calS$-sorted equational theories. The definition of $\V$-enriched $\calS$-sorted equational theory that we have given in Definition \ref{eqn_theory} will allow us to more readily establish in \S\ref{smc_section} that, when the symmetric monoidal structure of $\V$ is \emph{closed}, $\V$-enriched $\calS$-sorted equational theories are presentations (in a suitable sense) of certain $\V$-enriched monads on $\V^\calS$ (see Theorem \ref{mnd_pres_thm} and Remark \ref{amenable_rmk}). Definition \ref{eqn_theory} is also an $\calS$-sorted generalization of \cite[Definition 3.3]{Battenfeld_comp} (wherein $\V$-enriched single-sorted equational theories are referred to as \emph{(finitary) parameterized equational theories for $\V$}). 
\end{rmk_sub}

\begin{rmk_sub}
\label{enriched_rmk_2}
As in Remark \ref{enriched_rmk}, when the symmetric monoidal structure of $\V$ is \emph{closed} and $|-| : \V \to \Set$ is represented by the unit object $I$ of $\V$, the category $\calT\Alg$ underlies a $\V$-category (also denoted $\calT\Alg$) and the functor $U^\calT : \calT\Alg \to \V^\calS$ underlies a $\V$-functor (also denoted $U^\calT$). Indeed, we simply take the $\V$-category $\calT\Alg$ to be the full sub-$\V$-category of the $\V$-category $\Sigma\Alg$ \eqref{enriched_rmk} consisting of the $\calT$-algebras, and we take the $\V$-functor $U^\calT : \calT\Alg \to \V^\calS$ to be the restriction of the $\V$-functor $U^\Sigma : \Sigma\Alg \to \V^\calS$. However, as our focus in this section is primarily on the explicit description of free $\calT$-algebras (i.e.~of the objects in the image of a left adjoint to $U^\calT$), we shall not pay much attention to the $\V$-enriched structure of $\calT\Alg$ until \S\ref{smc_section}.  
\end{rmk_sub}

\noindent Our next objective is to ultimately show (in Theorem \ref{equiv_theory_prop} below) that $\V$-enriched $\calS$-sorted equational theories can be given a more traditional \emph{syntactic} formulation, with the algebraic $\Sigma$-equations of Definition \ref{equation} replaced by syntactic equations between \emph{terms} over the underlying classical $\calS$-sorted signature $|\Sigma|$ \eqref{ord_sig}. For this purpose, we shall require the following material.

\begin{para_sub}[\textbf{$\Indisc : \Set \to \V$ is monoidal}]
\label{indisc_para}
Recall from \ref{fibre_para} that the topological functor $|-| : \V \to \Set$ has a fully faithful right adjoint section $\Indisc : \Set \to \V$ that sends a set $S$ to the indiscrete object $\Indisc S$ over $S$ (the top element of the fibre $\Fib(S)$). Since $|-| : \V \to \Set$ is strict monoidal, it follows by \emph{doctrinal adjunction} \cite[1.5]{Kellydoctrinal} that the adjunction $|-| \dashv \Indisc : \Set \to \V$ is \emph{monoidal}, i.e.~that the right adjoint $\Indisc : \Set \to \V$ is a \emph{(lax) monoidal} functor (this is also easy to deduce directly from the definition of $\Indisc$). As a right adjoint, the functor $\Indisc : \Set \to \V$ also (strictly) preserves products.
\end{para_sub} 

\begin{defn_sub}[\textbf{The indiscrete $\Sigma$-algebra determined by a $|\Sigma|$-algebra}]
\label{indiscrete_algebra}
Let $\Sigma$ be a $\V$-enriched $\calS$-sorted signature with underlying classical $\calS$-sorted signature $|\Sigma|$ \eqref{ord_sig}, and let $A$ be a $|\Sigma|$-algebra. Since $\Indisc : \Set \to \V$ is monoidal and strictly preserves finite products \eqref{indisc_para}, we obtain an associated $\Sigma$-algebra $A^{\Indisc}$, which we may call \emph{the indiscrete $\Sigma$-algebra determined by the $|\Sigma|$-algebra $A$}, as follows. The $\calS$-sorted carrier object of $\V$ is $A^{\Indisc} := \left(\Indisc A_S\right)_{S \in \calS}$. For each $\sigma \in \Sigma$ of type $((S_1, \ldots, S_n), S, P)$, we first write
\[ \sigma^A : |P| \times A_{S_1} \times \ldots \times A_{S_n} \to A_S \] for the function
\[ (p, a_1, \ldots, a_n) \to \sigma_p^A(a_1, \ldots, a_n). \]  
We then define
\[ \sigma^{A^{\Indisc}} : P \tensor \left(\Indisc A_{S_1} \times \ldots \times \Indisc A_{S_n}\right) \to \Indisc A_S \] to be the following (two-line) composite $\V$-morphism, whose components $\alpha$ and $\beta$ are defined subsequently:
\[ P \tensor \left(\Indisc A_{S_1} \times \ldots \times \Indisc A_{S_n}\right) \xrightarrow{\alpha} \Indisc|P| \tensor \left(\Indisc A_{S_1} \times \ldots \times \Indisc A_{S_n}\right) \]
\[ = \Indisc|P| \tensor \Indisc\left(A_{S_1} \times \ldots \times A_{S_n}\right) \xrightarrow{\beta} \Indisc\left(|P| \times \left(A_{S_1} \times \ldots \times A_{S_n}\right)\right) \xrightarrow{\Indisc \sigma^A} \Indisc A_S. \]
The morphism $\alpha$ arises from the inequality $P \leq \Indisc|P|$ in $\Fib(|P|)$, while the morphism $\beta$ arises from the fact that $\Indisc$ is monoidal (the underlying functions of both $\alpha$ and $\beta$ are identities). This completes the definition of the $\Sigma$-algebra $A^{\Indisc}$.

The assignment $A \mapsto A^{\Indisc}$ ($A \in \ob|\Sigma|\Alg$) readily extends to a functor \[ (-)^{\Indisc} : |\Sigma|\Alg \to \Sigma\Alg \] given on a morphism $f : A \to B$ of $|\Sigma|$-algebras by
\[ f^{\Indisc} := \left(\Indisc f_S\right)_{S \in \calS} = \left(f_S\right)_{S \in \calS}, \] recalling that $\Indisc$ is a section of $|-| : \V \to \Set$.   
\end{defn_sub}

\begin{defn_sub}[\textbf{The ordinary algebraic $|\Sigma|$-operations determined by an algebraic $\Sigma$-operation}]
\label{image_alg_operation}
Let $\Sigma$ be a $\V$-enriched $\calS$-sorted signature with underlying classical $\calS$-sorted signature $|\Sigma|$ \eqref{ord_sig}, and let $\omega$ be an algebraic $\Sigma$-operation of type $\left((S_1, \ldots, S_n), S, P\right)$. For each $p \in |P|$, we define a ordinary algebraic $|\Sigma|$-operation $|\omega_p|$ with input sorts $(S_1, \ldots, S_n)$ and output sort $S$ as follows. For each $|\Sigma|$-algebra $A$, we shall define a function \[ |\omega_p|_A : A_{S_1} \times \ldots \times A_{S_n} \to A_S. \] By \ref{indiscrete_algebra}, we have an associated $\Sigma$-algebra $A^{\Indisc}$, and so we have the $\V$-morphism \[ \omega_{A^{\Indisc}} : P \tensor \left(\Indisc A_{S_1} \times \ldots \times \Indisc A_{S_n}\right) \to \Indisc A_S. \] 
Since $\Indisc : \Set \to \V$ is a section of $|-| : \V \to \Set$ and $|-|$ strictly preserves products, we then define $|\omega_p|_A$ to be the function \[ |\omega_p|_A := \left(A_{S_1} \times \ldots \times A_{S_n} = \left|\Indisc A_{S_1} \times \ldots \times \Indisc A_{S_n}\right| \xrightarrow{\omega_{A^{\Indisc}}(p, -)} |\Indisc A_{S}| = A_S\right). \] 
It then readily follows that $|\omega_p| := \left(|\omega_p|_A\right)_{A \in |\Sigma|\Alg}$ is indeed an algebraic $|\Sigma|$-operation.         
\end{defn_sub}  

\begin{defn_sub}[\textbf{The ordinary algebraic $|\Sigma|$-equations determined by an algebraic $\Sigma$-equation}]
\label{ord_eqn}
Let $\Sigma$ be a $\V$-enriched $\calS$-sorted signature with underlying classical $\calS$-sorted signature $|\Sigma|$ \eqref{ord_sig}. Given an algebraic $\Sigma$-equation $\sfe \equiv \left(\omega \doteq \nu\right)$ of type $((S_1, \ldots, S_n), S, P)$, for each $p \in |P|$ we define (in view of \ref{image_alg_operation}) a ordinary algebraic $|\Sigma|$-equation 
\begin{equation}
\label{e_p}
\sfe_p := \left(|\omega_p| \doteq |\nu_p|\right)
\end{equation} 
with input sorts $(S_1, \ldots, S_n)$ and output sort $S$. 
\end{defn_sub}  

\begin{prop_sub}
\label{ord_eqn_prop}
Let $\Sigma$ be a $\V$-enriched $\calS$-sorted signature with underlying classical $\calS$-sorted signature $|\Sigma|$ \eqref{ord_sig}, let $\sfe$ be an algebraic $\Sigma$-equation with parameter $P$, and let $A$ be a $\Sigma$-algebra. Then $A$ satisfies $\sfe$ iff the underlying $|\Sigma|$-algebra $|A|$ \eqref{ord_sig_para} satisfies the ordinary algebraic $|\Sigma|$-equations $\sfe_p$ \eqref{e_p} for all $p \in |P|$.  
\end{prop_sub}

\begin{proof}
Supposing that $\sfe \equiv \left(\omega \doteq \nu\right)$ is of type $((S_1, \ldots, S_n), S, P)$, we need to show that \[ \omega_A = \nu_A : P \tensor \left(A_{S_1} \times \ldots \times A_{S_n}\right) \to A_S \] iff for all $p \in |P|$ we have \[ |\omega_p|_{|A|} = |\nu_p|_{|A|} : \left|A_{S_1}\right| \times \ldots \times \left|A_{S_n}\right| \to \left|A_S\right|, \] where $|\omega_p|_{|A|} = \omega_{|A|^{\Indisc}}(p, -)$ and $|\nu_p|_{|A|} = \nu_{|A|^{\Indisc}}(p, -)$ (as functions in $\Set$) by \ref{image_alg_operation}. So it suffices to show that \[ \omega_A = \omega_{|A|^{\Indisc}} : |P| \times \left|A_{S_1}\right| \times \ldots \times \left|A_{S_n}\right| \to \left|A_S\right| \] and $\nu_A = \nu_{|A|^{\Indisc}}$ as functions in $\Set$. We have the morphism \[ \left(1_{\left|A_S\right|} : A_S \to \Indisc \left|A_S\right|\right)_{S \in \calS} : A \to |A|^{\Indisc} \] of $\Sigma\Alg$, and hence the following diagram commutes because $\omega$ is an algebraic $\Sigma$-operation, which yields the result for $\omega$ (the proof for $\nu$ being identical):
\[\begin{tikzcd}
	{P \tensor \left(A_{S_1} \times \ldots \times A_{S_n}\right)} &&& {P \tensor \left(\Indisc\left|A_{S_1}\right| \times \ldots \times \Indisc\left|A_{S_n}\right|\right)} \\
	\\
	{A_S} &&& {\Indisc\left|A_S\right|}.
	\arrow["{1_P \tensor \left(1_{\left|A_{S_1}\right|} \times \ldots \times 1_{\left|A_{S_n}\right|}\right)}", from=1-1, to=1-4]
	\arrow["{\omega_A}"', from=1-1, to=3-1]
	\arrow["{\omega_{|A|^{\Indisc}}}", from=1-4, to=3-4]
	\arrow["{1_{\left|A_S\right|}}"', from=3-1, to=3-4]
\end{tikzcd}\] 
\end{proof}

\noindent Proposition \ref{ord_eqn_prop} will provide the basis for showing, in Theorem \ref{equiv_theory_prop} below, that $\V$-enriched $\calS$-sorted equational theories can be given a more \emph{syntactic} formulation. Building up to Theorem \ref{equiv_theory_prop} will require some review of the syntax and semantics of $\Set$-enriched $\calS$-sorted equational theories, which we cover in \ref{syntax_para} and \ref{nat_syntax}.

\begin{para_sub}[\textbf{$\Sigma$-terms in context and their interpretations}]
\label{syntax_para}
Let $\Sigma$ be a classical $\calS$-sorted signature \eqref{signature}, and let $\Var_S$ ($S \in \calS$) be pairwise disjoint countably infinite sets of variable symbols. An \emph{$\calS$-sorted variable}\footnote{When $\calS$ is a singleton, so that $\Sigma$ is a classical \emph{single-sorted} signature, we can omit explicit mention of the unique sort in the concepts that we discuss here.} is an expression of the form $v : S$, where $S \in \calS$ is a sort and $v \in \Var_S$. An \emph{$\calS$-sorted variable context}, or simply a \emph{context}, is a finite (possibly empty) list of $\calS$-sorted variables, typically written $v_1 : S_1, \ldots, v_n : S_n$. Given a context $\vbar \equiv v_1 : S_1, \ldots, v_n : S_n$, we recursively define, for each sort $S \in \calS$, the \emph{$\Sigma$-terms of sort $S$ in context $\vbar$}, by the following clauses:
\begin{enumerate}[leftmargin=*]
\item For each $1 \leq i \leq n$, the expression $[\vbar \vdash v_i : S_i]$ is a $\Sigma$-term of sort $S_i$ in context $\vbar$.
\item For each operation symbol $\sigma \in \Sigma$ with input sorts $T_1, \ldots, T_m$ and output sort $T$ and all $\Sigma$-terms $[\vbar \vdash t_j : T_j]$ ($1 \leq j \leq m$) of sort $T_j$ in context $\vbar$, the expression $[\vbar \vdash \sigma(t_1, \ldots, t_m) : T]$ is a $\Sigma$-term of sort $T$ in context $\vbar$.    
\end{enumerate} 
For each sort $S \in \calS$, we write $\Term(\Sigma; \vbar)_S$ for the set of $\Sigma$-terms of sort $S$ in context $\vbar$, and we thus obtain the $\calS$-sorted set $\Term(\Sigma; \vbar) := \left(\Term(\Sigma; \vbar)_S\right)_{S \in \calS}$ of \emph{$\Sigma$-terms in context $\vbar$}. 

Given a context $\vbar$ and a sort $S \in \calS$, a \emph{syntactic $\Sigma$-equation of sort $S$ in context $\vbar$} is an ordered pair of $\Sigma$-terms of sort $S$ in context $\vbar$, and we write such an equation as $[\vbar \vdash s \doteq t : S]$. A \emph{syntactic $\Sigma$-equation in context $\vbar$} is a syntactic $\Sigma$-equation of sort $S$ in context $\vbar$ for some sort $S \in \calS$, while a \emph{syntactic $\Sigma$-equation (in context)} is a syntactic $\Sigma$-equation in context $\vbar$ for some context $\vbar$.  

Let $A$ be a $\Sigma$-algebra. Given a context $\vbar \equiv v_1 : S_1, \ldots, v_n : S_n$, we recursively define, for each sort $S \in \calS$, the \emph{interpretation in $A$} of each $\Sigma$-term $[\vbar \vdash t : S]$ of sort $S$ in context $\vbar$, which shall be a function
\[ \left[\vbar \vdash t : S\right]^A : A_{S_1} \times \ldots \times A_{S_n} \to A_S, \] by the following clauses:
\begin{enumerate}[leftmargin=*]
\item For each $1 \leq i \leq n$, we define \[ \left[\vbar \vdash v_i : S_i\right]^A : A_{S_1} \times \ldots \times A_{S_n} \to A_{S_i} \] to be the product projection onto $A_{S_i}$.

\item For each operation symbol $\sigma \in \Sigma$ with input sorts $T_1, \ldots, T_m$ and output sort $T$ and all $\Sigma$-terms $[\vbar \vdash t_j : T_j]$ ($1 \leq j \leq m$) of sort $T_j$ in context $\vbar$, we define 
\[ \left[\vbar \vdash \sigma(t_1, \ldots, t_m) : T\right]^A : A_{S_1} \times \ldots \times A_{S_n} \to A_T \] to be the following composite function:
\[ A_{S_1} \times \ldots \times A_{S_n} \xrightarrow{\left\langle \left[\vbar \vdash t_1 : T_1\right]^A, \ldots, \left[\vbar \vdash t_m : T_m\right]^A\right\rangle} A_{T_1} \times \ldots \times A_{T_m} \xrightarrow{\sigma^A} A_T. \]
\end{enumerate} 
We say that a $\Sigma$-algebra $A$ \emph{satisfies} a syntactic $\Sigma$-equation $[\vbar \vdash s \doteq t : S]$ of sort $S$ in context $\vbar$ if
\begin{equation}
\label{sat_synt}
\left[\vbar \vdash s : S\right]^A = \left[\vbar \vdash t : S\right]^A : A_{S_1} \times \ldots \times A_{S_n} \to A_S.
\end{equation} 

Given a morphism of $\Sigma$-algebras $f : A \to B$ and a sort $S \in \calS$, one readily verifies by induction on the $\Sigma$-term $[\vbar \vdash t : S]$ of sort $S$ in context $\vbar$ that the following diagram commutes:
\begin{equation}
\label{synt_to_nat_square}
\begin{tikzcd}
	{A_{S_1} \times \ldots \times A_{S_n}} &&& {B_{S_1} \times \ldots \times B_{S_n}} \\
	\\
	{A_S} &&& {B_S}.
	\arrow["{f_{S_1} \times \ldots \times f_{S_n}}", from=1-1, to=1-4]
	\arrow["{\left[\vbar \vdash t : S\right]^A}"', from=1-1, to=3-1]
	\arrow["{\left[\vbar \vdash t : S\right]^B}", from=1-4, to=3-4]
	\arrow["{f_S}"', from=3-1, to=3-4]
\end{tikzcd}
\end{equation}
Thus, we have an ordinary algebraic $\Sigma$-operation
\begin{equation}
\label{synt_to_nat_def}
\omega_t := \left(\left[\vbar \vdash t : S\right]^A\right)_{A \in \Sigma\Alg}
\end{equation} 
with input sorts $(S_1, \ldots, S_n)$ and output sort $S$. 
\end{para_sub}

\begin{lem_sub}[\textbf{The ordinary algebraic $\Sigma$-operation determined by a $|\Sigma|$-term in context}]
\label{synt_to_nat_lem}
Let $\Sigma$ be a $\V$-enriched $\calS$-sorted signature with underlying classical $\calS$-sorted signature $|\Sigma|$ \eqref{ord_sig}. Let $S \in \calS$ be a sort, let $\vbar \equiv v_1 : S_1, \ldots, v_n : S_n$ be a context, and let $[\vbar \vdash t : S]$ be a $|\Sigma|$-term of sort $S$ in context $\vbar$, so that we have the ordinary algebraic $|\Sigma|$-operation $\omega_t$ \eqref{synt_to_nat_def} with input sorts $(S_1, \ldots, S_n)$ and output sort $S$.
Then there is an ordinary algebraic $\Sigma$-operation $\widehat{\omega}_t$ with input sorts $(S_1, \ldots, S_n)$ and output sort $S$ such that
\begin{equation}
\label{disc_to_disc}
\left(\widehat{\omega}_t\right)_A = \left(\omega_t\right)_{|A|} = \left[\vbar \vdash t : S\right]^{|A|} : \left|A_{S_1}\right| \times \ldots \times \left|A_{S_n}\right| \to \left|A_{S}\right|
\end{equation} 
for each $\Sigma$-algebra $A$, where $|A|$ is the underlying $|\Sigma|$-algebra of $A$ \eqref{ord_sig_para}. Thus, we may write
\begin{equation}
\label{disc_to_disc_2}
\left(\widehat{\omega}_t\right)_A = \left[\vbar \vdash t : S\right]^{A} : A_{S_1} \times \ldots \times A_{S_n} \to A_{S}.
\end{equation}   
\end{lem_sub}

\begin{proof}
We simply take \eqref{disc_to_disc} as the definition of $\left(\widehat{\omega}_t\right)_A$, which is well-defined, since it readily follows by induction on $[\vbar \vdash t : S]$ that $\left(\omega_t\right)_{|A|} = \left[\vbar \vdash t : S\right]^{|A|}$ lifts to a $\V$-morphism $\left(\widehat{\omega}_t\right)_A : A_{S_1} \times \ldots \times A_{S_n} \to A_S$. Then $\widehat{\omega}_t$ is an algebraic $\Sigma$-operation, because $\omega_t$ is an algebraic $|\Sigma|$-operation and every morphism of $\Sigma$-algebras is also a morphism of the underlying $|\Sigma|$-algebras by \eqref{ord_sig_para}. 
\end{proof}

\begin{para_sub}[\textbf{Correspondence between ordinary algebraic $\Sigma$-operations and $\Sigma$-terms in context}]
\label{nat_syntax}
Let $\Sigma$ be a classical $\calS$-sorted signature \eqref{signature}. Given a context $\vbar \equiv v_1 : S_1, \ldots, v_n : S_n$ and a sort $S \in \calS$, it is known that ordinary algebraic $\Sigma$-operations $\omega$ with input sorts $(S_1, \ldots, S_n)$ and output sort $S$ are in bijective correspondence with $\Sigma$-terms $[\vbar \vdash t : S]$ of sort $S$ in context $\vbar$, such that when $\omega$ and $[\vbar \vdash t : S]$ are related under this correspondence, then 
\[ \omega_A = \left[\vbar \vdash t : S\right]^A : A_{S_1} \times \ldots \times A_{S_n} \to A_S \] for each $\Sigma$-algebra $A$. To recall the details of this correspondence, first note that the context $\vbar$ canonically determines an $\calS$-sorted set $X^{\vbar} = \left(X^{\vbar}_S\right)_{S \in \calS}$, where for each $1 \leq i \leq n$, the set $X^{\vbar}_{S_i}$ consists of the $\Sigma$-terms in context $\vbar$ of the form $\left[\vbar \vdash v : S_i\right]$ such that $v : S_i$ occurs in $\vbar$, and $X^{\vbar}_S := \varnothing$ for $S \notin \{S_1, \ldots, S_n\}$. We then write $F^\Sigma \vbar$ for the free $\Sigma$-algebra on the $\calS$-sorted set $X^{\vbar}$ determined by $\vbar$. For each sort $S \in \calS$, the carrier set of $F^\Sigma \vbar$ at the sort $S$ is (by \ref{free_sig_alg_Set}) the set $\Term_\Sigma\left(X^{\vbar}\right)_S$ of ground $\Sigma$-terms of sort $S$ with constants from $X^{\vbar}$, which we can clearly identify with the set $\Term(\Sigma; \vbar)_S$ of $\Sigma$-terms of sort $S$ in context $\vbar$ \eqref{syntax_para}.

Now, given an ordinary algebraic $\Sigma$-operation $\omega$ with input sorts $(S_1, \ldots, S_n)$ and output sort $S$, we have the component function
\[ \omega_{F^\Sigma \vbar} : \Term(\Sigma; \vbar)_{S_1} \times \ldots \times \Term(\Sigma; \vbar)_{S_n} \to \Term(\Sigma; \vbar)_S. \] The $\Sigma$-term $\left[\vbar \vdash t_\omega : S\right]$ of sort $S$ in context $\vbar$ that corresponds to $\omega$ is then
\begin{equation}
\label{t_omega_def} 
\left[\vbar \vdash t_\omega : S\right] := \omega_{F^\Sigma \vbar}\left([\vbar \vdash v_1 : S_1], \ldots, [\vbar \vdash v_n : S_n]\right) \in \Term(\Sigma; \vbar)_S.
\end{equation} 
For each $\Sigma$-algebra $A$ we have
\begin{equation}
\label{nat_to_synt_eq}
\omega_A = \left[\vbar \vdash t_\omega : S\right]^A : A_{S_1} \times \ldots \times A_{S_n} \to A_S
\end{equation} 
by the following argument. Given $a_1 \in A_{S_1}, \ldots, a_n \in A_{S_n}$, we must show that 
\begin{equation}
\label{nat_to_synt_eq2}
\omega_A(a_1, \ldots, a_n) = \left[\vbar \vdash t_\omega : S\right]^A(a_1, \ldots, a_n) \in A_S.
\end{equation} 
Let $h : F^\Sigma \vbar \to A$ be the unique morphism of $\Sigma$-algebras with the property that $h_{S_i}\left([\vbar \vdash v_i : S_i]\right) = a_i$ for each $1 \leq i \leq n$. Since $\omega$ is an algebraic $\Sigma$-operation, the following diagram commutes:
\begin{equation}\label{nat_to_synt_diag}
\begin{tikzcd}
	{\Term(\Sigma; \vbar)_{S_1} \times \ldots \times \Term(\Sigma; \vbar)_{S_n}} &&& {A_{S_1} \times \ldots \times A_{S_n}} \\
	\\
	{\Term(\Sigma; \vbar)_{S}} &&& {A_S}.
	\arrow["{h_{S_1} \times \ldots \times h_{S_n}}", from=1-1, to=1-4]
	\arrow["{\omega_{F^\Sigma \vbar}}"', from=1-1, to=3-1]
	\arrow["{\omega_A}", from=1-4, to=3-4]
	\arrow["{h_S}"', from=3-1, to=3-4]
\end{tikzcd}
\end{equation}
Moreover, for each sort $T \in \calS$ and each $\Sigma$-term $[\vbar \vdash t : T]$ of sort $T$ in context $\vbar$, one readily verifies that
\begin{equation}
\label{h_eq}
h_T([\vbar \vdash t : T]) = [\vbar \vdash t : T]^A(a_1, \ldots, a_n) \in A_T.
\end{equation} 
Using \eqref{nat_to_synt_diag}, \eqref{h_eq}, and the definition of $h$, we then have
\[ \omega_A(a_1, \ldots, a_n) = \omega_A\left(h_{S_1}\left([\vbar \vdash v_1 : S_1]\right), \ldots, h_{S_n}\left([\vbar \vdash v_n : S_n]\right)\right) \] \[ = h_S\left(\omega_{F^\Sigma \vbar}\left([\vbar \vdash v_1 : S_1], \ldots, [\vbar \vdash v_n : S_n]\right)\right) = h_S\left([\vbar \vdash t_\omega : S]\right) = \left[\vbar \vdash t_\omega : S\right]^A(a_1, \ldots, a_n), \] thereby proving \eqref{nat_to_synt_eq2} and thus \eqref{nat_to_synt_eq}. The injectivity of the assignment $\omega \mapsto \left[\vbar \vdash t_\omega : S\right]$ follows immediately from \eqref{nat_to_synt_eq}.

Conversely, given a $\Sigma$-term $[\vbar \vdash t : S]$ of sort $S$ in context $\vbar$, in \eqref{synt_to_nat_def} we defined a corresponding ordinary algebraic $\Sigma$-operation $\omega_t$ with input sorts $(S_1, \ldots, S_n)$ and output sort $S$ with the desired property that
\begin{equation}
\label{synt_to_nat_eq}
\left(\omega_t\right)_A = \left[\vbar \vdash t : S\right]^A : A_{S_1} \times \ldots \times A_{S_n} \to A_S
\end{equation}
for each $\Sigma$-algebra $A$. Using \eqref{t_omega_def} and \eqref{synt_to_nat_eq}, we have $\left[\vbar \vdash t_{\omega_t} : S\right] = \left[\vbar \vdash t : S\right]$ because 
\[ \left[\vbar \vdash t_{\omega_t} : S\right] = \left(\omega_t\right)_{F^\Sigma \vbar}\left([\vbar \vdash v_1 : S_1], \ldots, [\vbar \vdash v_n : S_n]\right) \] \[ = \left[\vbar \vdash t : S\right]^{F^\Sigma \vbar}\left([\vbar \vdash v_1 : S_1], \ldots, [\vbar \vdash v_n : S_n]\right) = \left[\vbar \vdash t : S\right], \]
where the last equality readily follows by induction on $[\vbar \vdash t : S]$. 
\end{para_sub}

\begin{prop_sub}
\label{equiv_eqn_prop}
Let $\Sigma$ be a $\V$-enriched $\calS$-sorted signature with underlying classical $\calS$-sorted signature $|\Sigma|$ \eqref{ord_sig}. Then the following hold:
\begin{enumerate}[leftmargin=*]
\item For each algebraic $\Sigma$-equation $\sfe$ with input sorts $(S_1, \ldots, S_n)$ and output sort $S$, there is a set $\calE_\sfe$ of ordinary algebraic $|\Sigma|$-equations, each with input sorts $(S_1, \ldots, S_n)$ and output sort $S$, which has the property that a $\Sigma$-algebra $A$ satisfies $\sfe$ iff the underlying $|\Sigma|$-algebra $|A|$ \eqref{ord_sig_para} satisfies each element of $\calE_\sfe$. 

\item Given a finite tuple $(S_1, \ldots, S_n)$ of sorts, we write $\vbar$ for the variable context $\vbar \equiv v_1 : S_1, \ldots, v_n : S_n$, whose elements are assumed to be pairwise distinct. For each ordinary algebraic $|\Sigma|$-equation $\sfe$ with input sorts $(S_1, \ldots, S_n)$ and output sort $S$, there is a syntactic $|\Sigma|$-equation $[\vbar \vdash s_\sfe \doteq t_\sfe : S]$ of sort $S$ in context $\vbar$ \eqref{syntax_para} with the property that a $|\Sigma|$-algebra satisfies $\sfe$ iff it satisfies $[\vbar \vdash s_\sfe \doteq t_\sfe : S]$ \eqref{sat_synt}. 

\item For each syntactic $|\Sigma|$-equation $[\vbar \vdash s \doteq t : S]$ of sort $S$ in context $\vbar \equiv v_1 : S_1, \ldots, v_n : S_n$ \eqref{syntax_para}, there is an ordinary algebraic $\Sigma$-equation $\widehat{\omega}_s \doteq \widehat{\omega}_t$ with input sorts $(S_1, \ldots, S_n)$ and output sort $S$ such that a $\Sigma$-algebra $A$ satisfies $\widehat{\omega}_s \doteq \widehat{\omega}_t$ iff the underlying $|\Sigma|$-algebra $|A|$ \eqref{ord_sig_para} satisfies $[\vbar \vdash s \doteq t : S]$ \eqref{sat_synt}.      
\end{enumerate}
\end{prop_sub}

\begin{proof}
For (1), writing $P$ for the parameter object of the algebraic $\Sigma$-equation $\sfe$, we take $\calE_\sfe := \left\{\sfe_p \mid p \in |P|\right\}$ \eqref{e_p}, and $\calE_\sfe$ has the desired property by Proposition \ref{ord_eqn_prop}. For (2), let $\sfe \equiv (\omega \doteq \nu)$ for ordinary algebraic $|\Sigma|$-operations $\omega$ and $\nu$. By \ref{nat_syntax} and \eqref{nat_to_synt_eq} (applied to the classical $\calS$-sorted signature $|\Sigma|$) there are $|\Sigma|$-terms $[\vbar \vdash t_\omega : S]$ and $[\vbar \vdash t_\nu : S]$ of sort $S$ in context $\vbar$ such that $\omega_A = \left[\vbar \vdash t_\omega : S\right]^A$ and $\nu_A = \left[\vbar \vdash t_\nu : S\right]^A$ for every $|\Sigma|$-algebra $A$, so the syntactic $|\Sigma|$-equation $[\vbar \vdash t_\omega \doteq t_\nu : S]$ has the desired property. 

For (3), by Lemma \ref{synt_to_nat_lem} there are ordinary algebraic $\Sigma$-operations $\widehat{\omega}_s$ and $\widehat{\omega}_t$ with input sorts $(S_1, \ldots, S_n)$ and output sort $S$ such that $\left(\widehat{\omega}_s\right)_A = \left[\vbar \vdash s : S\right]^{|A|}$ and $\left(\widehat{\omega}_t\right)_A = \left[\vbar \vdash t : S\right]^{|A|}$ for every $\Sigma$-algebra $A$. The algebraic $\Sigma$-equation $\widehat{\omega}_s \doteq \widehat{\omega}_t$ then has the desired property.            
\end{proof}

\begin{defn_sub}
\label{trad_theory}
A \textbf{classical $\calS$-sorted equational theory} is a pair $\calT = (\Sigma, \calE)$ consisting of a classical $\calS$-sorted signature $\Sigma$ \eqref{signature} and a set $\calE$ of syntactic $\Sigma$-equations in context \eqref{syntax_para}. A $\Sigma$-algebra $A$ is a \textbf{$\calT$-algebra} or \textbf{$\calT$-model} or \textbf{model of $\calT$} if it satisfies each syntactic $\Sigma$-equation in context in $\calE$ \eqref{sat_synt}. 
Given a fixed classical $\calS$-sorted signature $\Sigma$, a \textbf{classical $\calS$-sorted equational theory over $\Sigma$} is a classical $\calS$-sorted equational theory $\calT = (\Sigma, \calE)$ whose first component is $\Sigma$. When $\calS$ is a singleton, we speak instead of a \textbf{classical single-sorted equational theory (over $\Sigma$)}. 
\end{defn_sub}

\begin{theo_sub}
\label{equiv_theory_prop}
A $\V$-enriched $\calS$-sorted equational theory is equivalently given by a $\V$-enriched $\calS$-sorted signature $\Sigma$ and a classical $\calS$-sorted equational theory \eqref{trad_theory} over the underlying classical $\calS$-sorted signature $|\Sigma|$ \eqref{ord_sig}. More precisely: 
\begin{enumerate}[leftmargin=*]
\item Given a $\V$-enriched $\calS$-sorted equational theory $\calT = (\Sigma, \calE)$, there is a classical $\calS$-sorted equational theory $|\calT| = \left(|\Sigma|, |\calE|\right)$ over $|\Sigma|$ such that a $\Sigma$-algebra $A$ is a $\calT$-algebra iff the underlying $|\Sigma|$-algebra $|A|$ \eqref{ord_sig_para} is a $|\calT|$-algebra. 

\item Conversely, given a $\V$-enriched $\calS$-sorted signature $\Sigma$ and a classical $\calS$-sorted equational theory $\calT = (|\Sigma|, \calE)$ over $|\Sigma|$, there is a $\V$-enriched $\calS$-sorted equational theory $\widehat{\calT} = \left(\Sigma, \widehat{\calE}\right)$ over $\Sigma$ such that a $\Sigma$-algebra $A$ is a $\widehat{\calT}$-algebra iff the underlying $|\Sigma|$-algebra $|A|$ \eqref{ord_sig_para} is a $\calT$-algebra.
\end{enumerate}
\end{theo_sub}

\begin{proof}
To prove (1), for each algebraic $\Sigma$-equation $\sfe \in \calE$, we have by (1) and (2) of Proposition \ref{equiv_eqn_prop} a set $\left|\calE_\sfe\right|$ of syntactic $|\Sigma|$-equations in context with the property that a $\Sigma$-algebra $A$ satisfies $\sfe$ iff the underlying $|\Sigma|$-algebra $|A|$ satisfies each element of $\left|\calE_\sfe\right|$. We then set $|\calE| := \bigcup_{\sfe \in \calE} \left|\calE_\sfe\right|$ and $|\calT| := \left(|\Sigma|, |\calE|\right)$, which proves (1). 

To prove (2), for each syntactic $|\Sigma|$-equation $\sfe \in \calE$, there is by Proposition \ref{equiv_eqn_prop}(3) an (ordinary) algebraic $\Sigma$-equation $\widehat{\sfe}$ with the property that a $\Sigma$-algebra $A$ satisfies $\widehat{\sfe}$ iff the underlying $|\Sigma|$-algebra $|A|$ satisfies $\sfe$. We then set $\widehat{\calE} := \left\{\widehat{\sfe} \mid \sfe \in \calE\right\}$ and $\widehat{\calT} := \left(\Sigma, \widehat{\calE}\right)$, which proves (2). 
\end{proof}

\begin{defn_sub}
\label{underlying_trad_theory}
Let $\calT = (\Sigma, \calE)$ be a $\V$-enriched $\calS$-sorted equational theory. We call the classical $\calS$-sorted equational theory $|\calT| = (|\Sigma|, |\calE|)$ of Theorem \ref{equiv_theory_prop}(1) the \textbf{underlying classical $\calS$-sorted equational theory (of $\calT$)}. 
\end{defn_sub}

\noindent For the remainder of \S\ref{theories_section}, we now focus on establishing explicit descriptions of free algebras for $\V$-enriched multi-sorted equational theories, i.e.~of the left adjoint to $U^\calT : \calT\Alg \to \V^\calS$ for a $\V$-enriched $\calS$-sorted equational theory $\calT$ (in Theorems \ref{taut_thm}, \ref{explicit_free_fibre_prop_2}, \ref{cc_free_alg_thm}, and \ref{free_alg_cc}).

\begin{para_sub}
\label{ord_theory_para}
Let $\calT = (\Sigma, \calE)$ be a $\V$-enriched $\calS$-sorted equational theory, with underlying classical $\calS$-sorted equational theory $|\calT| = (|\Sigma|, |\calE|)$ \eqref{underlying_trad_theory}. In view of Theorem \ref{equiv_theory_prop}(1), the faithful functor $|-|^\Sigma : \Sigma\Alg \to |\Sigma|\Alg$ of \ref{ord_sig_para} restricts to a faithful functor $|-|^\calT : \calT\Alg \to |\calT|\Alg$, yielding the following commutative diagram:
\begin{equation}
\label{theory_square}
\begin{tikzcd}
	\calT\Alg && \Sigma\Alg && {\V^\calS} \\
	\\
	{|\calT|\Alg} && {|\Sigma|\Alg} && {\Set^\calS}.
	\arrow[hook, from=1-1, to=1-3]
	\arrow["{U^\Sigma}", from=1-3, to=1-5]
	\arrow["{|-|^\calT}"', from=1-1, to=3-1]
	\arrow["{|-|^\Sigma}"', from=1-3, to=3-3]
	\arrow["{|-|^\calS}", from=1-5, to=3-5]
	\arrow[hook, from=3-1, to=3-3]
	\arrow["{U^{|\Sigma|}}"', from=3-3, to=3-5]
\end{tikzcd}
\end{equation}
Note that the top composite in \eqref{theory_square} is precisely the forgetful functor $U^\calT : \calT\Alg \to \V^\calS$ of Definition \ref{eqn_theory}, while the bottom composite is the forgetful functor $U^{|\calT|} : |\calT|\Alg \to \Set^\calS$. 
\end{para_sub}

\begin{prop_sub}
\label{taut_lift_hyp_2}
Let $\calT = (\Sigma, \calE)$ be a $\V$-enriched $\calS$-sorted equational theory with underlying classical $\calS$-sorted equational theory $|\calT| = (|\Sigma|, |\calE|)$ \eqref{underlying_trad_theory}. Then the lefthand square of the commutative diagram \eqref{theory_square} is a pullback (in $\mathsf{CAT}$), the faithful functor $|-|^\calT : \calT\Alg \to |\calT|\Alg$ is topological, and the forgetful functor $U^\calT : \calT\Alg \to \V^\calS$ sends $|-|^\calT$-initial sources to $|-|^\calS$-initial sources \eqref{top_para}.
\end{prop_sub}

\begin{proof}
The first assertion follows immediately from Theorem \ref{equiv_theory_prop}(1). Since $|-|^\Sigma : \Sigma\Alg \to |\Sigma|\Alg$ is topological by Proposition \ref{taut_lift_hyp} and the lower left edge of \eqref{theory_square} is fully faithful, it readily follows that $|-|^\calT : \calT\Alg \to |\calT|\Alg$ is topological, and that the upper left edge sends $|-|^\calT$-initial sources to $|-|^\Sigma$-initial sources. Since $U^\Sigma : \Sigma\Alg \to \V^\calS$ sends $|-|^\Sigma$-initial sources to $|-|^\calS$-initial sources by Proposition \ref{taut_lift_hyp}, it follows that $U^\calT : \calT\Alg \to \V^\calS$ sends $|-|^\calT$-initial sources to $|-|^\calS$-initial sources. 
\end{proof}

\subsection{Free algebras of classical multi-sorted equational theories}
\label{free_theory_alg_Set}

Proposition \ref{taut_lift_hyp_2} shows that the outer commutative rectangle in \eqref{theory_square} satisfies two of the three central assumptions of Wyler's taut lift theorem \cite[Theorem 21.28]{AHS}, the third assumption being that the functor $U^{|\calT|} : |\calT|\Alg \to \Set^\calS$ has a left adjoint. Given an arbitrary classical $\calS$-sorted equational theory $\calT = (\Sigma, \calE)$ (not necessarily of the form $\left|\calT'\right|$ for a $\V$-enriched $\calS$-sorted equational theory $\calT'$), in this subsection we recall the well-known explicit description of the left adjoint $F^\calT : \Set^\calS \to \calT\Alg$ for $U^\calT : \calT\Alg \to \Set^\calS$.

\begin{defn_sub}
\label{congruence}
Let $\Sigma$ be a classical $\calS$-sorted signature \eqref{signature}, and let $A$ be a $\Sigma$-algebra. A \textbf{$\Sigma$-congruence} on $A$ is an $\calS$-sorted family $\left(\sim_S\right)_{S \in \calS}$ with each $\sim_S$ ($S \in \calS$) an equivalence relation on the carrier set $A_S$, such that for each $\sigma \in \Sigma$ with input sorts $(S_1, \ldots, S_n)$ and output sort $S$ and all $a_i, b_i \in A_{S_i}$ ($1 \leq i \leq n$), we have
\[ a_i \sim_{S_i} b_i \ \ \forall 1 \leq i \leq n \ \ \Longrightarrow \ \ \sigma^A(a_1, \ldots, a_n) \sim_S \sigma^A(b_1, \ldots, b_n). \]
For each $S \in \calS$, we write $A_S/{\sim_S}$ for the set of equivalence classes of $\sim_S$, and for each $a \in A_S$, we write $[a]_S$ or just $[a]$ for its equivalence class. We write $q_S : A_S \to A_S/{\sim_S}$ for the surjective function given by $a \mapsto [a]$ $(a \in A_S)$.  
\end{defn_sub}

\begin{para_sub}[\textbf{The $\Sigma$-algebra determined by a $\Sigma$-congruence}]
\label{quot_alg_para}
Let $\Sigma$ be a classical $\calS$-sorted signature \eqref{signature}, let $A$ be a $\Sigma$-algebra, and let $\sim \ = \congr$ be a $\Sigma$-congruence on $A$. Then the $\calS$-sorted set \[ A/{\sim} := \left(A_S\middle/{\sim_S}\right)_{S \in \calS} \] can be equipped with a $\Sigma$-algebra structure \[ A/{\sim}\] so that the $\calS$-sorted function \[ q := \left(q_S\right)_{S \in \calS} : A =  \left(A_S\right)_{S \in \calS} \to \left(A_S/{\sim_S}\right)_{S \in \calS} = A/{\sim} \] becomes a morphism of $\Sigma$-algebras \[ q : A \to A/{\sim}. \] Specifically, let $\sigma \in \Sigma$ have input sorts $(S_1, \ldots, S_n)$ and output sort $S$, and let us write $\sim_i \ := \ \sim_{S_i}$ for each $1 \leq i \leq n$. We define 
\[ \sigma^{A/{\sim}} : \left(A_{S_1}/{\sim_1}\right) \times \ldots \times \left(A_{S_n}/{\sim_n}\right) \to A_S/{\sim_S} \] 
\[ \left([a_1], \ldots, [a_n]\right) \mapsto \left[\sigma^A(a_1, \ldots, a_n)\right], \] which is well-defined because $\sim$ is a $\Sigma$-congruence on $A$.
\end{para_sub}

\begin{defn_sub}
\label{cong_gen}
Let $\calT = (\Sigma, \calE)$ be a classical $\calS$-sorted equational theory \eqref{trad_theory}, and let $A$ be a $\Sigma$-algebra. \textbf{The $\Sigma$-congruence on $A$ generated by $\calE$} is the smallest $\Sigma$-congruence $\sim^\calE \ = \left(\sim^\calE_S\right)_{S \in \calS}$ on $A$ with the property that for each syntactic $\Sigma$-equation $[\vbar \vdash s \doteq t : S]$ of sort $S \in \calS$ in context $\vbar \equiv v_1 : S_1, \ldots, v_n : S_n$ in $\calE$ and all $a_i \in A_{S_i}$ ($1 \leq i \leq n$), we have \[ s^A(a_1, \ldots, a_n) \ \sim_S^\calE \ t^A(a_1, \ldots, a_n), \] where $s^A, t^A : A_{S_1} \times \ldots \times A_{S_n} \rightrightarrows A_S$ are the interpretations of $[\vbar \vdash s : S]$ and $[\vbar \vdash t : S]$ in $A$ \eqref{syntax_para}. It is then straightforward to verify that the resulting $\Sigma$-algebra $A/{\sim^\calE}$ \eqref{quot_alg_para} is a $\calT$-algebra.
\end{defn_sub}

\begin{para_sub}[\textbf{Explicit description of free algebras for classical multi-sorted equational theories}]
\label{init_alg_theory_para}
Let $\calT = (\Sigma, \calE)$ be a classical $\calS$-sorted equational theory \eqref{trad_theory}. Let $F^\Sigma : \Set^\calS \to \Sigma\Alg$ be the left adjoint of $U^\Sigma : \Sigma\Alg \to \Set^\calS$, whose explicit description we recalled in \ref{free_sig_alg_Set}. It is (well) known that\footnote{We shall also recover this result from the $\V = \Set$ case of Theorem \ref{cc_free_alg_thm} below.} for each $\calS$-sorted set $X = \left(X_S\right)_{S \in \calS}$, the free $\calT$-algebra $F^\calT X$ on $X$ is the $\calT$-algebra $\left.F^\Sigma X\middle/{\sim^\calE}\right.$, where $\sim^\calE$ is the $\Sigma$-congruence on $F^\Sigma X$ generated by $\calE$ \eqref{cong_gen}. So for each sort $S \in \calS$, the carrier set of $F^\calT X$ at $S$ is the set $\left.\Term_\Sigma(X)_S\middle/{\sim^\calE_S}\right.$ of equivalence classes (with respect to $\sim^\calE_S$) of ground terms of sort $S$ with constants from $X$. For each $\sigma \in \Sigma$ with input sorts $(S_1, \ldots, S_n)$ and output sort $S$ and all $\left.[t_i] \in \Term_{\Sigma}(X)_{S_i}\middle/{\sim^\calE_{S_i}}\right.$ ($1 \leq i \leq n$), we have
\[ \left. \sigma^{F^\calT X}([t_1], \ldots, [t_n]) = [\sigma(t_1, \ldots, t_n)] \in \Term_{\Sigma}(X)_{S}\middle/{\sim^\calE_{S}}. \right. \]
The canonical $\calS$-sorted function $\left.\eta = \eta_X^\calT : X \to U^\calT F^\calT X = \Term_{\Sigma}(X)\middle/{\sim^\calE}\right.$ is given at each sort $S \in \calS$ by the composite function
\[ \left.X_S \hookrightarrow \Term_\Sigma(X)_S \xrightarrow{q_S} \Term_\Sigma(X)_S\middle/{\sim^\calE_S}.\right. \]
\end{para_sub}

\subsection{Free algebras of enriched multi-sorted equational theories: the general case}
\label{init_alg_theory_gen_sub}  

Let $\calT = (\Sigma, \calE)$ be a $\V$-enriched $\calS$-sorted equational theory. For each $\calS$-sorted object $X$ of $\V$, we write $X \downarrow U^\calT$ for the family consisting of all pairs $\left(A, f : X \to U^\calT A\right)$ consisting of a $\calT$-algebra $A$ and a morphism $f : X \to U^\calT A = A$ of $\V^\calS$. Given such a pair $\left(A, f : X \to U^\calT A\right)$, we write \[ f^\sharp : F^{|\calT|} |X|^\calS \to |A|^\calT \] for the unique $|\calT|$-algebra morphism corresponding to the $\calS$-sorted function $f : |X|^\calS \to \left|U^\calT A\right|^\calS = U^{|\calT|}|A|^{\calT}$ under the adjunction $F^{|\calT|} \dashv U^{|\calT|} : |\calT|\Alg \to \Set^\calS$ of \ref{init_alg_theory_para}. Note that by \ref{init_alg_theory_para}, we may also regard $f^\sharp$ as (just) an $\calS$-sorted function $\left.f^\sharp : \Term_{|\Sigma|}(|X|)\middle/{\sim^{|\calE|}} \to |A|^\calS\right.$, where $\sim^{|\calE|}$ is the $|\Sigma|$-congruence on the $|\Sigma|$-algebra $F^{|\Sigma|}(|X|)$ generated by $|\calE|$ \eqref{cong_gen}.      

From Proposition \ref{taut_lift_hyp_2}, \ref{init_alg_theory_para}, and Wyler's taut lift theorem \cite[Theorem 21.28]{AHS}, we now obtain the following result, which (like Proposition \ref{taut_sig_thm}) was previously only established in the special case where $\calT$ is ordinary and single-sorted (see e.g.~\cite[Theorem 2.3]{Porst_existence}).

\begin{theo_sub}
\label{taut_thm}
Let $\calT = (\Sigma, \calE)$ be a $\V$-enriched $\calS$-sorted equational theory with underlying classical $\calS$-sorted equational theory $|\calT| = (|\Sigma|, |\calE|)$ \eqref{underlying_trad_theory}. Then $U^\calT : \calT\Alg \to \V^\calS$ has a left adjoint $F^\calT : \V^\calS \to \calT\Alg$, and the resulting adjunction $F^\calT \dashv U^\calT : \calT\Alg \to \V^\calS$ is a lifting of the adjunction $F^{|\calT|} \dashv U^{|\calT|} : |\calT|\Alg \to \Set^\calS$. In particular, the following diagram strictly commutes:
\begin{equation}
\label{theory_square_free}
\begin{tikzcd}
	\V^\calS && {\calT\Alg} \\
	\\
	{\Set^\calS} && {|\calT|\Alg}.
	\arrow["{F^\calT}", from=1-1, to=1-3]
	\arrow["{|-|^\calT}", from=1-3, to=3-3]
	\arrow["{|-|^\calS}"', from=1-1, to=3-1]
	\arrow["{F^{|\calT|}}"', from=3-1, to=3-3]
\end{tikzcd}
\end{equation}
For each $\calS$-sorted object $X$ of $\V$, the free $\calT$-algebra $F^\calT X$ on $X$ is the domain of the initial lift of the $|-|^\calT$-structured source
\begin{equation}\label{free_th_eqn} 
\left(f^\sharp : F^{|\calT|}|X| \to |A|^\calT\right)_{(A, f) \in X \downarrow U^\calT}.
\end{equation}
In particular, the carrier object $U^\calT F^\calT X$ of $\V^\calS$ is the domain of the initial lift of the $|-|^\calS$-structured source
\begin{equation}\label{free_th_eqn_2} 
\left.\left(f^\sharp : \Term_{|\Sigma|}(|X|)\middle/{\sim^{|\calE|}} \to |A|^\calS\right)_{(A, f) \in X \downarrow U^\calT}, \right.
\end{equation}
where $\sim^{|\calE|}$ is the $|\Sigma|$-congruence on the $|\Sigma|$-algebra $F^{|\Sigma|}(|X|)$ generated by $|\calE|$ \eqref{cong_gen}. 
\end{theo_sub}

\begin{para_sub}
\label{free_th_para}
In the setting of Theorem \ref{taut_thm}, note that the free $\calT$-algebra $F^\calT X$ on an object $X$ of $\V^\calS$ lies in the fibre of $\calT\Alg$ over the free $|\calT|$-algebra $F^{|\calT|}(|X|)$ on the underlying object $|X|$ of $\Set^\calS$. Recalling the explicit description of $F^{|\calT|}(|X|)$ from \ref{init_alg_theory_para}, this means that the carrier object $U^\calT F^\calT X$ lies in the fibre of $\V^\calS$ over the $\calS$-sorted set $\left.\Term_{|\Sigma|}(|X|)\middle/{\sim^{|\calE|}}\right.$ of equivalence classes of ground $|\Sigma|$-terms with constants from $|X|$ modulo the $|\Sigma|$-congruence $\sim^{|\calE|}$ generated by $|\calE|$ \eqref{cong_gen}. Moreover, in view of \ref{ord_sig_para}, the following hold:
\begin{enumerate}[leftmargin=*]
\item The canonical $\calS$-sorted function 
\[ \left.\eta = \eta_{|X|}^{|\calT|} : |X| \to \Term_{|\Sigma|}(|X|)\middle/{\sim^{|\calE|}} = \left|U^\calT F^\calT X\right|\right. \] of \ref{init_alg_theory_para} is a $\V^\calS$-admissible morphism $\eta = \eta_X^\calT : X \to U^\calT F^\calT X$.  
\item For each $\sigma \in \Sigma$ of type $((S_1, \ldots, S_n), S, P)$, the function
\[ \left.\widetilde{\sigma} : |P| \times \left(\Term_{|\Sigma|}(|X|)_{S_1}\middle/{\sim^{|\calE|}_{S_1}}\right) \times \ldots \times \left(\Term_{|\Sigma|}(|X|)_{S_n}\middle/{\sim^{|\calE|}_{S_n}}\right) \to \Term_{|\Sigma|}(|X|)_{S}\middle/{\sim^{|\calE|}_{S}}\right. \]
\[ (p, [t_1], \ldots, [t_n]) \mapsto \left[\sigma_p(t_1, \ldots, t_n)\right], \] 
which may be equivalently written as
\[ \widetilde{\sigma} : \left|P \tensor \left(\left(U^\calT F^\calT X\right)_{S_1} \times \ldots \times \left(U^\calT F^\calT X\right)_{S_n} \right)\right| = |P| \times \left|\left(U^\calT F^\calT X\right)_{S_1} \right| \times \ldots \times \left|\left(U^\calT F^\calT X\right)_{S_n} \right| \] \[ \to \left|\left(U^\calT F^\calT X\right)_{S} \right|, \] is a $\V$-admissible morphism \[ \widetilde{\sigma} : P \tensor \left(\left(U^\calT F^\calT X\right)_{S_1}  \times \ldots \times \left(U^\calT F^\calT X\right)_{S_n} \right) \to \left(U^\calT F^\calT X\right)_{S}, \] and $\sigma^{F^\calT X} = \widetilde{\sigma}$. 
\end{enumerate}
So $F^\calT X$ is completely determined by its carrier object $U^\calT F^\calT X$. While Theorem \ref{taut_thm} provides an explicit description of this carrier object, we shall now endeavour to provide even more explicit and concrete descriptions of this carrier object (in Theorems \ref{explicit_free_fibre_prop_2} and \ref{free_alg_cc}). 
\end{para_sub}

\begin{defn_sub}
\label{T_compatible}
Let $\calT = (\Sigma, \calE)$ be a $\V$-enriched $\calS$-sorted equational theory with underlying classical $\calS$-sorted equational theory $|\calT| = (|\Sigma|, |\calE|)$ \eqref{underlying_trad_theory}, and let $X$ be an $\calS$-sorted object of $\V$. Given an $\calS$-sorted object $B$ of $\V$ that lies in the fibre over the $\calS$-sorted set $\left.\Term_{|\Sigma|}(|X|)\middle/{\sim^{|\calE|}}\right.$ of congruence classes of ground $|\Sigma|$-terms with constants from $|X|$ modulo the $|\Sigma|$-congruence $\sim^{|\calE|}$ \eqref{init_alg_theory_para}, we say that $B$ is \textbf{$\calT$-compatible with $X$} if the following conditions are satisfied:
\begin{enumerate}[leftmargin=*]
\item The $\calS$-sorted unit function 
\[ \left.\eta = \eta_{|X|} : |X| \to \Term_{|\Sigma|}(|X|)/{\sim^{|\calE|}} = |B|\right. \] is a $\V^\calS$-admissible morphism $\eta : X \to B$.  
\item For each $\sigma \in \Sigma$ of type $((S_1, \ldots, S_n), S, P)$, the function
\[ \left.\widetilde{\sigma} : |P| \times \left(\Term_{|\Sigma|}(|X|)_{S_1}\middle/{\sim^{|\calE|}_{S_1}}\right) \times \ldots \times \left(\Term_{|\Sigma|}(|X|)_{S_n}\middle/{\sim^{|\calE|}_{S_n}}\right) \to \Term_{|\Sigma|}(|X|)_{S}\middle/{\sim^{|\calE|}_{S}}\right. \]
\[ (p, [t_1], \ldots, [t_n]) \mapsto \left[\sigma_p(t_1, \ldots, t_n)\right], \] 
which may be equivalently written as
\[ \widetilde{\sigma} : \left|P \tensor \left(B_{S_1} \times \ldots \times B_{S_n} \right)\right| = |P| \times \left|B_{S_1} \right| \times \ldots \times \left|B_{S_n} \right| \to \left|B_{S} \right|, \] is a $\V$-admissible morphism \[ \widetilde{\sigma} : P \tensor \left(B_{S_1}  \times \ldots \times B_{S_n} \right) \to B_{S}. \] 
\end{enumerate} 
If $B$ is $\calT$-compatible with $X$, then $B$ can be equipped with the structure of a $\Sigma$-algebra, where $\sigma^B = \widetilde{\sigma}$ for each $\sigma \in \Sigma$, and $B$ is moreover a $\calT$-algebra by Proposition \ref{equiv_eqn_prop}(1), because the underlying $|\Sigma|$-algebra of $B$ is the free $|\calT|$-algebra $F^{|\calT|}(|X|)$.
\end{defn_sub}

We now have the following slightly more explicit description (compared to that given in Theorem \ref{taut_thm}) of the left adjoint $F^\calT : \V^\calS \to \calT\Alg$ for a $\V$-enriched $\calS$-sorted equational theory $\calT$, whose proof is virtually identical to that of Theorem \ref{explicit_free_fibre_prop}. The special case where $\calT$ is ordinary and single-sorted appears on \cite[Page 439]{Porst_cc}, and the special case where $\calT$ is a $\V$-enriched single-sorted equational theory with $\V = \Top$ appears as \cite[Lemma 4.4]{Battenfeld_comp}.

\begin{theo_sub}
\label{explicit_free_fibre_prop_2}
Let $\calT = (\Sigma, \calE)$ be a $\V$-enriched $\calS$-sorted equational theory with underlying classical $\calS$-sorted equational theory $|\calT| = (|\Sigma|, |\calE|)$ \eqref{underlying_trad_theory}, and let $X$ be an $\calS$-sorted object of $\V$. Then the $\calS$-sorted carrier object $U^\calT F^\calT X \in \ob\V^\calS$ of the free $\calT$-algebra $F^\calT X$ on $X$ is the infimum in the fibre over the $\calS$-sorted set $\left.U^{|\calT|} F^{|\calT|} |X| = \Term_{|\Sigma|}(|X|)\middle/{\sim^{|\calE|}}\right.$ \eqref{init_alg_theory_para} of all the elements that are $\calT$-compatible with $X$ \eqref{T_compatible}:
\[ \left.U^\calT F^\calT X = \bigwedge\left\{B \in \Fib\left(\Term_{|\Sigma|}(|X|)\middle/{\sim^{|\calE|}}\right) \mid B \text{ is } \calT\text{-compatible with } X\right\}.\right. \]   
\end{theo_sub}

\subsection{Free algebras of enriched multi-sorted equational theories: the case where $\V$ is cartesian closed}
\label{cong_section}

Analogous to the discussion at the beginning of \S\ref{sig_alg_subsection_cc}, given a $\V$-enriched $\calS$-sorted equational theory $\calT$ and an $\calS$-sorted object $X$ of $\V$, the descriptions of the free $\calT$-algebra $F^\calT X$ on $X$ provided by Theorems \ref{taut_thm} and \ref{explicit_free_fibre_prop_2} are not as concrete or intrinsic as one might hope, since they ultimately refer not only to $X$ and $\calT$, but also to the entire category $\calT\Alg$. Under the assumption (which we make throughout this subsection) that $\V$ is equipped with the \emph{cartesian} symmetric monoidal structure and is moreover cartesian closed, we shall provide a more concrete and intrinsic description of $F^\calT X$ in Theorems \ref{cc_free_alg_thm} and \ref{free_alg_cc} below. This subsection essentially generalizes \cite[\S 3.3]{Battenfeld_comp}, which treats the special case where $\calT$ is single-sorted and $\V$ is the cartesian closed category $\CGTop$ of compactly generated topological spaces (see Examples \ref{examples_ex} and \ref{topc_ex}). 

\begin{defn_sub}
\label{congruence_2}
Let $\Sigma$ be a $\V$-enriched $\calS$-sorted signature with underlying classical $\calS$-sorted signature $|\Sigma|$ \eqref{ord_sig}, and let $A$ be a $\Sigma$-algebra. A \textbf{$\Sigma$-congruence on $A$} is a $|\Sigma|$-congruence \eqref{congruence} on the underlying $|\Sigma|$-algebra $|A|$ \eqref{ord_sig_para}. 

Explicitly, a $\Sigma$-congruence on $A$ is an $\calS$-sorted family $\left(\sim_S\right)_{S \in \calS}$ with each $\sim_S$ ($S \in \calS$) an equivalence relation on the set $\left|A_S\right|$, such that for each $\sigma \in \Sigma$ of type $((S_1, \ldots, S_n), S, P)$, each $p \in |P|$, and all $a_i, b_i \in \left|A_{S_i}\right|$ ($1 \leq i \leq n$), we have
\[ a_i \sim_{S_i} b_i \ \ \forall 1 \leq i \leq n \ \ \Longrightarrow \ \ \sigma^A(p, a_1, \ldots, a_n) \ \sim_S \ \sigma^A(p, b_1, \ldots, b_n). \]    
\end{defn_sub}

\begin{defn_sub}
\label{quot_obj}
Let $\Sigma$ be a $\V$-enriched $\calS$-sorted signature, let $A$ be a $\Sigma$-algebra, and let $\sim \ = \congr$ be a $\Sigma$-congruence on $A$. For each $S \in \calS$ we have the surjective function $q_S : |A_S| \to |A_S|/{\sim_S}$ given by $a \mapsto [a]$ ($a \in A_S$). We define the object \[ A_S/{\sim_S} \] of $\V$ to be the set $|A_S|/{\sim_S}$ equipped with the final structure induced by the surjective function $q_S : |A_S| \to |A_S|/{\sim_S}$, so that \[ q_S : A_S \to A_S/{\sim_S} \] is a quotient morphism of $\V$ \eqref{top_para}. We thus have an object 
\begin{equation}
\label{S_sorted_quot}
A/{\sim} := \left(A_S/{\sim_S}\right)_{S \in \calS}
\end{equation} 
of $\V^\calS$ equipped with a quotient morphism 
\begin{equation}
\label{S_sorted_quot_mor}
q := \left(q_S\right)_{S \in \calS} : A \to A/{\sim}.
\end{equation}    
\end{defn_sub}

\begin{lem_sub}[\textbf{The $\Sigma$-algebra determined by a $\Sigma$-congruence}]
\label{quot_alg_lem}
Let $\Sigma$ be a $\V$-enriched $\calS$-sorted signature, let $A$ be a $\Sigma$-algebra, and let $\sim \ = \congr$ be a $\Sigma$-congruence on $A$. Then the object $A/{\sim}$ of $\V^\calS$ \eqref{S_sorted_quot} can be equipped with a $\Sigma$-algebra structure 
\begin{equation}
\label{quot_alg_eq}
A/{\sim}
\end{equation}
so that the quotient morphism $q : A \to A/{\sim}$ \eqref{S_sorted_quot_mor} becomes a morphism of $\Sigma$-algebras $q : A \to A/{\sim}$. The underlying $|\Sigma|$-algebra of $A/{\sim}$ is $|A|/{\sim}$ \eqref{quot_alg_para}. 
\end{lem_sub}

\begin{proof}
Let $\sigma \in \Sigma$ have type $((S_1, \ldots, S_n), S, P)$, and let us write $\sim_i \ := \ \sim_{S_i}$ and $q_i := q_{S_i}$ for each $1 \leq i \leq n$. We define a $\V$-morphism \[ \sigma^{A/{\sim}} : P \times \left(A_{S_1}/{\sim_1}\right) \times \ldots \times \left(A_{S_n}/{\sim_n}\right) \to A_S/{\sim_S} \] (recall that $\tensor = \times$) as follows. For all $p \in |P|$ and $a_i \in \left|A_{S_i}\right|$ ($1 \leq i \leq n$), we define
\begin{equation}
\label{quot_sigma} 
\sigma^{A/{\sim}}\left(p, [a_1], \ldots, [a_n]\right) := \left[\sigma^A(p, a_1, \ldots, a_n)\right],
\end{equation}
which is well-defined because $\sim$ is a $\Sigma$-congruence on $A$. We must now show that the function \[ \sigma^{A/{\sim}} : |P| \times \left|A_{S_1}/{\sim_1}\right| \times \ldots \times \left|A_{S_n}/{\sim_n}\right| \to \left|A_S/{\sim_S}\right| \] defined by \eqref{quot_sigma} is $\V$-admissible. Because $\V$ is cartesian closed, the $\V$-morphism \[ 1_P \times q_1 \times \ldots \times q_n : P \times A_{S_1} \times \ldots \times A_{S_n} \to P \times \left(A_{S_1}/{\sim_1}\right) \times \ldots \times \left(A_{S_n}/{\sim_n}\right) \] is a quotient morphism. So it suffices to show that the composite function \[ \sigma^{A/{\sim}} \circ \left(1_P \times q_1 \times \ldots \times q_n\right) : \left|P \times A_{S_1} \times \ldots \times A_{S_n}\right| \to \left|A_S\right|/{\sim_S} \] is $\V$-admissible; but this is true because it is equal (by the definition \eqref{quot_sigma} of $\sigma^{A/{\sim}}$) to the $\V$-admissible function $q_S \circ \sigma^A$. So we have a well-defined $\Sigma$-algebra $A/{\sim}$, and it immediately follows from \eqref{quot_sigma} that the quotient morphism $q : A \to A/{\sim}$ becomes a morphism of $\Sigma$-algebras $q : A \to A/{\sim}$. The final assertion is immediate from the relevant definitions.          
\end{proof}

\begin{defn_sub}
\label{kernel_cong}
Let $\Sigma$ be a $\V$-enriched $\calS$-sorted signature, and let $f : A \to B$ be a morphism of $\Sigma$-algebras. The \textbf{kernel $\Sigma$-congruence of $f$} is the $\Sigma$-congruence \[ \sim^f \ = \left(\sim^f_S\right)_{S \in \calS} \] on $A$ defined as follows: for all $S \in \calS$ and $a, a' \in \left|A_S\right|$, we have $a \sim_S^f a'$ if $f_S(a) = f_S\left(a'\right) \in \left|B_S\right|$. It is immediate that $\sim^f$ is indeed a $\Sigma$-congruence on $A$.  
\end{defn_sub}

\begin{lem_sub}
\label{kernel_lem}
Let $\Sigma$ be a $\V$-enriched $\calS$-sorted signature, let $f : A \to B$ be a morphism of $\Sigma$-algebras, and let $\sim$ be any $\Sigma$-congruence on $A$ that is contained in the kernel $\Sigma$-congruence $\sim^f$ of $f$ (i.e.~$\sim_S \ \subseteq \ \sim^f_S$ for each $S \in \calS$). Then $f : A \to B$ factors in $\Sigma\Alg$ as
\begin{equation}
\label{kern_fact_eq}
\left.f = \left(A \xrightarrow{q} \left(A\middle/{\sim}\right) \xrightarrow{f^*} B\right)\right.
\end{equation}
for a unique $\Sigma$-algebra morphism $\left.f^* : \left(A\middle/{\sim}\right) \to B\right.$. 
\end{lem_sub}

\begin{proof}
For each $S \in \calS$, we define the function \[ \left.f^*_S : \left|A_S\middle/{\sim_S}\right| = \left(\left|A_S\right|\middle/{\sim_S}\right) \to \left|B_S\right|\right.\]
\[ [a] \mapsto f_S(a), \] which is well-defined by the definition of $\sim^f$ and the assumption that $\sim$ is contained in $\sim^f$. The function $f_S^*$ is $\V$-admissible because $f_S^* \circ q_S = f_S$ and $\left.q_S : A_S \to A_S\middle/{\sim_S}\right.$ is a quotient morphism and $f_S : \left|A_S\right| \to \left|B_S\right|$ is $\V$-admissible. Then $f^*$ is a $\Sigma$-algebra morphism because $f$ is a $\Sigma$-algebra morphism, and we clearly have $f^* \circ q = f$. The uniqueness of $f^*$ follows from the surjectivity of each $q_S$ ($S \in \calS$).      
\end{proof}

\begin{defn_sub}
\label{cong_gen_2}
Let $\calT = (\Sigma, \calE)$ be a $\V$-enriched $\calS$-sorted equational theory with underlying classical $\calS$-sorted equational theory $|\calT| = \left(|\Sigma|, |\calE|\right)$ \eqref{underlying_trad_theory}, and let $A$ be a $\Sigma$-algebra. The \textbf{$\Sigma$-congruence $\sim^\calE$ on $A$ generated by $\calE$} is the $|\Sigma|$-congruence $\sim^{|\calE|}$ on the underlying $|\Sigma|$-algebra $|A|$ generated by $|\calE|$ \eqref{cong_gen}. We then have the $\Sigma$-algebra $\left.A\middle/{\sim^\calE}\right.$ \eqref{quot_alg_eq}, whose underlying $|\Sigma|$-algebra is $\left.|A|\middle/{\sim^{|\calE|}}\right.$ \eqref{quot_alg_lem}.     
\end{defn_sub}

\begin{lem_sub}
\label{cong_gen_lem}
Let $\calT = (\Sigma, \calE)$ be a $\V$-enriched $\calS$-sorted equational theory, and let $A$ be a $\Sigma$-algebra. Then the $\Sigma$-algebra $\left.A\middle/{\sim^\calE}\right.$ \eqref{quot_alg_eq} is a $\calT$-algebra, and for any $\Sigma$-algebra morphism $f : A \to B$ such that $B$ is a $\calT$-algebra, $\sim^\calE$ is contained in the kernel $\Sigma$-congruence $\sim^f$ of $f$.  
\end{lem_sub}

\begin{proof}
The underlying $|\Sigma|$-algebra of $\left.A\middle/{\sim^\calE}\right.$ is $\left.|A|\middle/{\sim^{|\calE|}}\right.$, which is a $|\calT|$-algebra by \ref{cong_gen}, so that $\left.A\middle/{\sim^\calE}\right.$ is a $\calT$-algebra by Theorem \ref{equiv_theory_prop}(1). Now let $f : A \to B$ be a morphism of $\Sigma$-algebras to a $\calT$-algebra $B$. By the definition of $\sim^\calE$ and the fact that $\sim^f$ is a $\Sigma$-congruence, to show that $\sim^\calE \ \subseteq \ \sim^f$ it suffices to show for each syntactic $|\Sigma|$-equation $[\vbar \vdash s \doteq t : S]$ of sort $S$ in context $\vbar \equiv v_1 : S_1, \ldots, v_n : S_n$ in $|\calE|$ that $s^{A}(a_1, \ldots, a_n) \sim_S^f t^{A}(a_1, \ldots, a_n)$, i.e.~that $f_S\left(s^{A}(a_1, \ldots, a_n)\right) = f_S\left(t^{A}(a_1, \ldots, a_n)\right)$, for all $a_i \in \left|A_{S_i}\right|$ ($1 \leq i \leq n$), where we write $s^{A} := \left[\vbar \vdash s : S\right]^{A}$ \eqref{disc_to_disc_2}, and similarly for $t$ and $B$. Indeed, by \eqref{synt_to_nat_square} and the assumption that $B$ is a $\calT$-algebra, we have   
\[ f_S\left(s^{A}(a_1, \ldots, a_n)\right) = s^{B}\left(f_{S_1}(a_1), \ldots, f_{S_n}(a_n)\right) = t^{B}\left(f_{S_1}(a_1), \ldots, f_{S_n}(a_n)\right) = f_S\left(t^{A}(a_1, \ldots, a_n)\right), \] as desired.           
\end{proof}

\noindent We now have the following theorem, which generalizes \cite[Theorem 3.13]{Battenfeld_comp}, where $\V = \CGTop$ and $\calT$ is single-sorted, as well as \cite[Proposition 1.5]{Porst_cc}, where $\V$ is an arbitrary cartesian closed topological category over $\Set$ (as in this subsection), but $\calT$ is ordinary and single-sorted. 

\begin{theo_sub}
\label{cc_free_alg_thm}
Let $\calT = (\Sigma, \calE)$ be a $\V$-enriched $\calS$-sorted equational theory, and let $F^\Sigma : \V^\calS \to \Sigma\Alg$ be the left adjoint of $U^\Sigma : \Sigma\Alg \to \V^\calS$ \eqref{taut_sig_thm}. Let $X$ be an object of $\V^\calS$, and let $\sim^\calE$ be the $\Sigma$-congruence on the free $\Sigma$-algebra $F^\Sigma X$ generated by $\calE$ \eqref{cong_gen_2}. Then the $\Sigma$-algebra $\left.F^\Sigma X\middle/{\sim^\calE}\right.$ \eqref{quot_alg_eq} is the free $\calT$-algebra $F^\calT X$ on $X$.   
\end{theo_sub}

\begin{proof}
That $\left.F^\Sigma X\middle/{\sim^\calE}\right.$ is indeed a $\calT$-algebra holds by Lemma \ref{cong_gen_lem}. We have a $\V^\calS$-morphism $\left.\eta_{X}^\calT : X \to U^\calT\left(F^\Sigma X\middle/{\sim^\calE}\right)\right.$ defined as the composite
\[ \left.X \xrightarrow{\eta^\Sigma_X} U^\Sigma F^\Sigma X \xrightarrow{U^\Sigma q \ = \ q} U^\Sigma\left(F^\Sigma X\middle/{\sim^\calE}\right) = U^\calT\left(F^\Sigma X\middle/{\sim^\calE}\right). \right.\]
Now let $f : X \to U^\calT A = A$ be a morphism from $X$ to the underlying $\calS$-sorted carrier object of a $\calT$-algebra $A$. We must show that there is a unique $\Sigma$-algebra morphism $\left.f^\sharp : \left(F^\Sigma X\middle/{\sim^\calE}\right) \to A\right.$ such that $f^\sharp \circ \eta^\calT_X = f$. Since $A$ is (in particular) a $\Sigma$-algebra, we first obtain a unique $\Sigma$-algebra morphism $f^* : F^\Sigma X \to A$ such that $f^* \circ \eta^\Sigma_X = f$. By Lemma \ref{cong_gen_lem} we have that $\sim^\calE$ is contained in the kernel $\Sigma$-congruence $\sim^{f^*}$ of $f^*$. From Lemma \ref{kernel_lem} we then obtain the following factorization of $f^*$ in $\Sigma\Alg$:
\[ \left.f^* = \left(F^\Sigma X \xrightarrow{q} \left(F^\Sigma X\middle/{\sim^\calE}\right) \xrightarrow{f^\sharp} A\right). \right.\]
We then have \[ f^\sharp \circ \eta^\calT_X = f^\sharp \circ q \circ \eta^\Sigma_X = f^* \circ \eta^\Sigma_X = f, \] as desired. For the uniqueness of $f^\sharp$, let $\left.h : \left(F^\Sigma X\middle/{\sim^\calE}\right) \to A\right.$ satisfy $h \circ \eta_X^\calT = f$. Then we have \[ h \circ q \circ \eta^\Sigma_X = h \circ \eta^\calT_X = f = f^* \circ \eta^\Sigma_X = f^\sharp \circ q \circ \eta^\Sigma_X, \] so that $h \circ q = f^\sharp \circ q$ and hence $h = f^\sharp$ by the surjectivity of each $q_S$ ($S \in \calS$).   
\end{proof}

\noindent The following theorem now follows immediately from Theorems \ref{free_sig_alg_cc} and \ref{cc_free_alg_thm}.

\begin{theo_sub}
\label{free_alg_cc}
Let $\calT = (\Sigma, \calE)$ be a $\V$-enriched $\calS$-sorted equational theory, let $X$ be an object of $\V^\calS$, and let $\sim^\calE$ be the $\Sigma$-congruence on the free $\Sigma$-algebra $F^\Sigma X$ generated by $\calE$. Then the $\calS$-sorted carrier object $U^\calT F^\calT X$ of the free $\calT$-algebra $F^\calT X$ on $X$ is 
\[ \left. U^\calT F^\calT X = \left(\bigvee_n \Omega^n(X)\right) \middle/ {\sim^\calE} = \left(\left(\bigvee_n \Omega^n(X)_S\right)\middle/{\sim^\calE_S}\right)_{S \in \calS}, \right. \]
where the $\calS$-sorted objects $\Omega^n(X)$ of $\V$ ($n \geq 0$) in the fibre over the $\calS$-sorted set $\Term_{|\Sigma|}(|X|)$ are defined as follows \eqref{free_sig_alg_cc}:
\begin{enumerate}[leftmargin=*]
\item $\Omega^0(X)$ is the final $\V^\calS$-structure on $\Term_{|\Sigma|}(|X|)$ induced by the $\calS$-sorted inclusion function $\eta = \eta_{|X|}^{|\Sigma|} : |X| \to \Term_{|\Sigma|}(|X|)$. 

\item Given $\Omega^n(X) \in \Fib\left(\Term_{|\Sigma|}(|X|)\right)$ ($n \geq 0$), we first define, for each sort $S \in \calS$, an element $\Omega^n(X, S) \in \Fib\left(\Term_{|\Sigma|}(|X|)_S\right)$ as the final $\V$-structure on the set $\Term_{|\Sigma|}(|X|)_S$ induced by the functions
\[ \widehat{\sigma} : \left|P \times \Omega^n(X)_{S_1} \times \ldots \times \Omega^n(X)_{S_m}\right| = |P| \times \left|\Omega^n(X)_{S_1}\right| \times \ldots \times \left|\Omega^n(X)_{S_m}\right| \to \Term_{|\Sigma|}(|X|)_S \]
\[ (p, t_1, \ldots, t_m) \mapsto \sigma_p(t_1, \ldots, t_m) \]
 for all $\sigma \in \Sigma$ of type $((S_1, \ldots, S_m), S, P)$. We thus obtain an $\calS$-sorted object $\left(\Omega^n(X, S)\right)_{S \in \calS}$ of $\V$ in the fibre over the $\calS$-sorted set $\Term_{|\Sigma|}(|X|)$, and we then define \[ \Omega^{n+1}(X) := \Omega^n(X) \vee \left(\Omega^n(X, S)\right)_{S \in \calS} \] in the fibre over the $\calS$-sorted set $\Term_{|\Sigma|}(|X|)$. \qed   
\end{enumerate}     
\end{theo_sub}

\noindent The following corollary is the single-sorted version of Theorem \ref{free_alg_cc}.

\begin{cor_sub}
\label{free_alg_cc_SS}
Let $\calT = (\Sigma, \calE)$ be a $\V$-enriched single-sorted equational theory, let $X$ be an object of $\V$, and let $\sim^\calE$ be the $\Sigma$-congruence on the free $\Sigma$-algebra $F^\Sigma X$ generated by $\calE$. Then the carrier object $U^\calT F^\calT X$ of the free $\calT$-algebra $F^\calT X$ on $X$ is  
\[ \left. U^\calT F^\calT X = \left(\bigvee_n \Omega^n(X)\right) \middle/ {\sim^\calE}, \right. \]
where the objects $\Omega^n(X)$ of $\V$ ($n \geq 0$) in the fibre over the set $\Term_{|\Sigma|}(|X|)$ are defined as follows \eqref{free_sig_alg_cc_SS}:
\begin{enumerate}[leftmargin=*]
\item $\Omega^0(X)$ is the final $\V$-structure on the set $\Term_{|\Sigma|}(|X|)$ induced by the inclusion function $\eta = \eta_{|X|}^{|\Sigma|} : |X| \to \Term_{|\Sigma|}(|X|)$. 

\item Given $\Omega^n(X) \in \Fib\left(\Term_{|\Sigma|}(|X|)\right)$ ($n \geq 0$), we first define $\Omega^n(X, \Sigma)$ to be the final $\V$-structure on the set $\Term_{|\Sigma|}(|X|)$ induced by the functions
\[ \widehat{\sigma} : |P| \times \left|\Omega^n(X)\right|^m \to \Term_{|\Sigma|}(|X|) \] 
\[ (p, t_1, \ldots, t_m) \mapsto \sigma_p(t_1, \ldots, t_m) \]
for all $\sigma \in \Sigma$ of arity $m \geq 0$ and parameter $P$. We then define \[ \Omega^{n+1}(X) := \Omega^n(X) \vee \Omega^n(X, \Sigma) \] in the fibre over the set $\Term_{|\Sigma|}(|X|)$.  \qed 
\end{enumerate}     
\end{cor_sub}

\section{Examples of enriched multi-sorted equational theories}
\label{examples}

In this section, we provide several examples of $\V$-enriched multi-sorted equational theories, whose free algebras can now be explicitly described (and computed) using Theorem \ref{explicit_free_fibre_prop_2} and Theorem \ref{free_alg_cc} (if $\V$ is cartesian closed). Unless otherwise stated, $\V$ is any topological category over $\Set$ that satisfies our main assumptions (stated at the beginning of \S\ref{first_section}), and $\calS$ is an arbitrary set of sorts. By Theorem \ref{equiv_theory_prop}, we can and shall regard a $\V$-enriched $\calS$-sorted equational theory as given by a $\V$-enriched $\calS$-sorted signature $\Sigma$ and a classical $\calS$-sorted equational theory over the underlying classical $\calS$-sorted signature $|\Sigma|$ \eqref{ord_sig}.

\begin{ex}[\textbf{$\V$-enriched equational theories determined by classical equational theories}]
\label{trad_ex}
Every classical $\calS$-sorted equational theory $\calT = (\Sigma, \calE)$ \eqref{trad_theory} can be regarded as an ordinary $\V$-enriched $\calS$-sorted equational theory $\calT^*$ by simply regarding each operation symbol in $\Sigma$ as being ordinary (i.e.~as having parameter $I$, the unit object of $\V$); $\calT^*$-algebras can then be described as $\calT$-algebras ``in $\V$''. Examples of classical single-sorted equational theories include the equational theories for semigroups, lattices, (commutative) monoids, (abelian) groups, and (commutative and/or unital) rings. Given a small category $\A$, the category of functors $\A \to \V$ can also be described as the category of algebras for the ordinary $\V$-enriched $\ob\A$-sorted equational theory determined by a certain classical $\ob\A$-sorted equational theory (e.g.~see \cite[\S 3]{Lambek_emb}, and also cf.~Example \ref{Vfunctor_ex} below).    
\end{ex}

\begin{ex}[\textbf{$H$-monoids in $\V = \Top$}]
\label{h_space}
In the following example, due to Rory Lucyshyn-Wright, we let $\V = \Top$ be the category of topological spaces, equipped with the cartesian monoidal structure (see Examples \ref{examples_ex} and \ref{top_ex}). For the purposes of this example, we define an \emph{$H$-monoid}\footnote{Here we are adopting the terminology of the nLab \cite{nlab:h-space}, but note that in this example we moreover require that the binary operation is associative up to a \emph{specified} homotopy, and that the unit element is a unit for the binary operation up to \emph{specified} homotopies. $H$-monoids were referred to as \emph{homotopy associative $H$-spaces} in \cite{Stasheff_H-spaces} and \cite{Sugawara_grouplike}.} to be a topological space $A$ equipped with a continuous binary operation $m^A : A \times A \to A$, an element $e^A \in |A|$ (i.e.~a continuous function $e^A : 1 \to A$), a homotopy $\alpha^A : [0, 1] \times A^3 \to A$ between 
\[ A \times A \times A \xrightarrow{1_A \times m^A} A \times A \xrightarrow{m^A} A \ \ \ \text{   and   } \ \ \ A \times A \times A \xrightarrow{m^A \times 1_A} A \times A \xrightarrow{m^A} A, \] a homotopy $\beta^A : [0, 1] \times A \to A$ between 
\[ A \xrightarrow{\left\langle1_A, \ !_A\right\rangle} A \times 1 \xrightarrow{\left\langle 1_A, e^A\right\rangle} A \times A \xrightarrow{m^A} A \ \ \ \text{   and   } \ \ \ 1_A : A \to A, \] and a homotopy $\gamma^A : [0, 1] \times A \to A$ between 
\[ A \xrightarrow{\left\langle!_A, 1_A\right\rangle} 1 \times A \xrightarrow{\left\langle e^A, 1_A\right\rangle} A \times A \xrightarrow{m^A} A \ \ \ \text{   and   } \ \ \ 1_A : A \to A. \] 
Let $\Sigma$ be the $\Top$-enriched single-sorted signature \eqref{signature} with an ordinary operation symbol $m$ of arity $2$, an ordinary operation symbol $e$ of arity $0$, an operation symbol $\alpha$ of arity $3$ and parameter $[0, 1]$, and operation symbols $\beta$ and $\gamma$ of arity $1$ and parameter $[0, 1]$. We define a $\Top$-enriched single-sorted equational theory $\calT = (\Sigma, \calE)$ by letting $\calE$ consist of the following syntactic $|\Sigma|$-equations in context \eqref{syntax_para}: 
\[ \left[v_1, v_2, v_3 \vdash \alpha_0\left(v_1, v_2, v_3\right) \doteq m\left(v_1, m\left(v_2, v_3\right)\right)\right] \ \ \text{ and } \ \ \left[v_1, v_2, v_3 \vdash \alpha_1\left(v_1, v_2, v_3\right) \doteq m\left(m\left(v_1, v_2\right), v_3\right)\right], \]
\[ \left[v \vdash \beta_0(v) \doteq m(v, e)\right] \ \ \text{ and } \ \ \left[v \vdash \beta_1(v) \doteq v\right], \]
\[ \left[v \vdash \gamma_0(v) \doteq m(e, v)\right] \ \ \text{ and } \ \ \left[v \vdash \gamma_1(v) \doteq v\right]. \]
A $\calT$-algebra is then precisely an $H$-monoid in the sense defined above.
\end{ex}

\begin{ex}[\textbf{Homotopy weakenings of classical equational theories}]
More generally\footnote{We recover the example of $H$-monoids from Example \ref{h_space} by taking $\calT$ to be the classical (single-sorted) equational theory for monoids.}, let $\calT = (\Sigma, \calE)$ be any classical $\calS$-sorted equational theory, and (with $\V = \Top$) let us construct what we might call its \emph{homotopy weakening} $\calT_h = \left(\Sigma_h, \calE_h\right)$, which will be a $\Top$-enriched $\calS$-sorted equational theory, with the property that $\calT_h$-algebras can be described as ``$\calT$-algebras'' in $\V = \Top$ where the equations of $\calT$ only hold up to specified homotopies. 

For each syntactic $\Sigma$-equation $\sfe \in \calE$ of sort $S$ in context $\vbar \equiv v_1 : S_1, \ldots, v_n : S_n$, we consider an operation symbol $\alpha^\sfe$ of type $((S_1, \ldots, S_n), S, [0, 1])$. We then define $\Sigma_h := \Sigma \cup \left\{\alpha^\sfe \mid \sfe \in \calE\right\}$, where we regard each $\sigma \in \Sigma$ as an ordinary operation symbol in $\Sigma_h$ with the same input and output sorts. Given a syntactic $\Sigma$-equation $\sfe \equiv [\vbar \vdash s \doteq t : S]$ in $\calE$ (with $\vbar$ as above), we consider the syntactic $\left|\Sigma_h\right|$-equations 
\begin{equation}
\label{hw_eq}
\left[\vbar \vdash \alpha_0^\sfe\left(v_1, \ldots, v_n\right) \doteq s : S\right] \ \ \text{ and } \ \ \left[\vbar \vdash \alpha_1^\sfe\left(v_1, \ldots, v_n\right) \doteq t : S\right],
\end{equation} 
and we let $\calE_h$ consist of precisely the syntactic $\left|\Sigma_h\right|$-equations in \eqref{hw_eq} for all $\sfe \in \calE$.

A $\calT_h$-algebra $A$ is then an $\calS$-sorted topological space $A = \left(A_S\right)_{S \in \calS}$ equipped with, for each $\sigma \in \Sigma$ with input sorts $(S_1, \ldots, S_n)$ and output sort $S$, a continuous function $\sigma^A : A_{S_1} \times \ldots \times A_{S_n} \to A_S$, and, for each syntactic $\Sigma$-equation $\sfe \equiv \left[\vbar \vdash s \doteq t : S\right]$ in $\calE$ as above, a homotopy $\left(\alpha^\sfe\right)^A : [0, 1] \times A_{S_1} \times \ldots \times A_{S_n} \to A_S$ from the continuous function $[\vbar \vdash s : S]^A : A_{S_1} \times \ldots \times A_{S_n} \to A_S$ to the continuous function $[\vbar \vdash t : S]^A : A_{S_1} \times \ldots \times A_{S_n} \to A_S$ \eqref{disc_to_disc_2}. Thus, a $\calT_h$-algebra can indeed be described as a $\calT$-algebra in $\V = \Top$ in which each equation only holds up to a specified homotopy.     
\end{ex}

\noindent The following example assumes some basic familiarity with enriched category theory.  

\begin{ex}[\textbf{Covariant $\V$-presheaves}]
\label{Vfunctor_ex}
Let $\A$ be any small \emph{$\V$-category}, i.e.~a small category enriched in the symmetric monoidal category $\V = (\V, \tensor, I)$. Under the (weak) assumption that the topological functor $|-| : \V \to \Set$ is represented by the unit object $I$ (equivalently, that the unit object $I$ is the discrete object on a singleton set), the small $\V$-category $\A$ is precisely a small \emph{ordinary} category $\A_0$ for which each hom-set $\A_0(A, B)$ ($A, B \in \ob\A_0$) carries the structure of an object $\A(A, B)$ of $\V$, and the composition morphisms 
\[ |\A(A, B) \tensor \A(B, C)| = \A_0(A, B) \times \A_0(B, C) \to \A_0(A, C) = |\A(A, C)| \]
\[ (f, g) \mapsto g \circ f \] ($A, B, C \in \ob\A_0$) of $\A_0$ all lift to $\V$-morphisms $\A(A, B) \tensor \A(B, C) \to \A(A, C)$.

A \emph{(covariant) $\V$-presheaf} $F$ on $\A$ (e.g.~see \cite[Definition 8.4]{formal_relative_monads})\footnote{Note that the definition given in (e.g.)~\cite[Definition 8.4]{formal_relative_monads} reduces to that given here because $|-| : \V \to \Set$ is faithful.} consists of an $\ob\A$-sorted object $(FA)_{A \in \ob\A}$ of $\V$ together with, for each ordered pair $(A, B)$ of objects of $\A$, a $\V$-morphism
\[ F_{AB} : \A(A, B) \tensor FA \to FB, \] so that the following equalities hold for all $A, B, C \in \ob\A$, all $f \in |\A(A, B)|$ and $g \in |\A(B, C)|$, and all $x \in |FA|$:
\[ F_{AA}\left(1_A, x\right) = x \]
\[ F_{AC}\left(g \circ f, x\right) = F_{BC}\left(g, F_{AB}(f, x)\right). \] When $\V$ is symmetric monoidal closed, so that $\V$ itself may be regarded as a $\V$-category, $\V$-presheaves on $\A$ are equivalently $\V$-functors $\A \to \V$.

Given $\V$-presheaves $F$ and $G$ on $\A$, a \emph{$\V$-natural transformation} $\phi : F \Longrightarrow G$ is an $\ob\A$-sorted morphism $\phi = \left(\phi_A : FA \to GA\right)_{A \in \ob\A}$ of $\V$ such that the following square commutes for all $A, B \in \ob\A$:
\begin{equation}
\label{psh_eq}
\begin{tikzcd}
	{\A(A, B) \tensor FA} &&& {\A(A, B) \tensor GA} \\
	\\
	{FB} &&& {GB}.
	\arrow["{1 \tensor \phi_A}", from=1-1, to=1-4]
	\arrow["F_{AB}"', from=1-1, to=3-1]
	\arrow["G_{AB}", from=1-4, to=3-4]
	\arrow["{\phi_B}"', from=3-1, to=3-4]
\end{tikzcd}
\end{equation}
When $\V$ is symmetric monoidal closed, a $\V$-natural transformation between $\V$-presheaves is precisely a $\V$-natural transformation between the corresponding $\V$-valued $\V$-functors. We have an ordinary category $\V\Psh(\A)$ of $\V$-presheaves on $\A$ and $\V$-natural transformations between them, which is isomorphic to the ordinary category $\V\Cat(\A, \V)$ of $\V$-functors $\A \to \V$ and $\V$-natural transformations between them when $\V$ is symmetric monoidal closed.   

We shall define a $\V$-enriched $\ob\A$-sorted equational theory $\calT_\A = \left(\Sigma_\A, \calE_\A\right)$ such that $\calT_\A\Alg = \V\Psh(\A)$. For each ordered pair $(A, B)$ of objects of $\A$, we consider an operation symbol $\sigma_{AB}$ of type $(A, B, \A(A, B))$, where the parameter object is the hom-object $\A(A, B)$ of the $\V$-category $\A$. The $\V$-enriched $\ob\A$-sorted signature $\Sigma_\A$ then consists of precisely the operation symbols $\sigma_{AB}$ for all $(A, B) \in \ob\A \times \ob\A$. For all $A, B, C \in \ob\A$, all $f \in |\A(A, B)|$, and all $g \in |\A(B, C)|$, we let $\calE_\A$ consist of precisely the following syntactic $\left|\Sigma_\A\right|$-equations in context: 
\[ \left[v : A \vdash \left(\sigma_{AA}\right)_{1_A}(v) = v\right] \]
\[ \left[v : A \vdash \left(\sigma_{BC}\right)_g\left(\left(\sigma_{AB}\right)_f(v)\right) = \left(\sigma_{AC}\right)_{gf}(v)\right]. \] 
Setting $\calT_\A := \left(\Sigma_\A, \calE_\A\right)$, w immediately have $\calT_\A\Alg = \V\Psh(\A)$.    
\end{ex}

\begin{ex}[\textbf{Relational algebraic theories}]
\label{rel_th_ex}
Let $\V = \T\Mod$ be the category of models for a relational Horn theory $\T$ without equality (see Examples \ref{examples_ex} and \ref{rel_ex}). Then a certain subclass of the \emph{relational algebraic theories} of \cite{Monadsrelational} can be equivalently described as $\T\Mod$-enriched single-sorted equational theories. However, treating this example in adequate detail is beyond the scope of the present paper. We intend to study the connection between the present paper and \cite{Monadsrelational} more closely in future work.   
\end{ex}

\section{The connection with $\V$-enriched monads and algebraic theories when $\V$ is symmetric monoidal closed}
\label{smc_section}

We suppose throughout this section that the symmetric monoidal structure of the topological category $\V = (\V, \tensor, I)$ over $\Set$ is \emph{closed}, with internal hom $[-, -]$, so that $\V$ itself may be regarded as a $\V$-category. We also suppose that the faithful (topological) functor $|-| : \V \to \Set$ is represented by the unit object $I$ of $\V$. Under these assumptions, we shall exposit the connection between the $\V$-enriched $\calS$-sorted equational theories of \S\ref{theories_section} and the \emph{(diagrammatic) presentations} of certain $\V$-enriched monads and algebraic theories studied in \cite{Pres, EP, Struct, Ax}. We assume that the reader has some basic familiarity with enriched category theory (in particular, with the basic theory of weighted colimits, which we mainly employ in \ref{eleuth_para} and Theorem \ref{pi_category_thm}); see \cite{Kelly}.  

\begin{para}[\textbf{$\V^\calS$ as a $\V$-category}]
\label{Vcat_para}
The category $\V^\calS$ may also be regarded as a $\V$-category: given $\calS$-sorted objects $X$ and $Y$ of $\V$, the hom-object $\V^\calS(X, Y)$ in $\V$ is the product $\prod_{S \in \calS} \left[X_S, Y_S\right]$. The $\V$-category $\V^\calS$ is also \emph{tensored} \cite[\S 3.7]{Kelly}, with tensors in $\V^\calS$ being formed \emph{pointwise} \cite[\S 3.3]{Kelly}: given an object $V$ of $\V$ and an object $X$ of $\V^\calS$, the tensor $V \tensor X$ in $\V^\calS$ is given by
\[ V \tensor X = \left(V \tensor X_S\right)_{S \in \calS}. \]
\end{para}

\begin{para}[\textbf{The subcategory of arities $\N_\calS \hookrightarrow \V^\calS$}]
\label{N_S_para}
For each natural number $n \in \N$, we write $n \cdot I = \coprod_n I$ for the $n^{\mathsf{th}}$ copower of the unit object $I$. We then write $\N_\V \hookrightarrow \V$ for the full sub-$\V$-category consisting of the objects $n \cdot I$ ($n \in \N$). For each $n \in \N$ and each $V \in \ob\V$, note that $[n \cdot I, V] \cong V^n$, the $n^{\mathsf{th}}$ power of the object $V$. We also write $\N_\V^\calS$ for the full sub-$\V$-category $\prod_\calS \N_\V \hookrightarrow \prod_\calS \V = \V^\calS$. Finally, we write $\N_\calS$ for the full sub-$\V$-category of $\N_\V^\calS$ (and hence of $\V^\calS$) consisting of the objects $\left(n_S \cdot I\right)_{S \in \calS}$ of $\N_\V^\calS$ such that $n_S = 0$ for all but finitely many $S \in \calS$. We shall denote an object of $\N_\calS$ as $J = \left(n_{S_1}, \ldots, n_{S_m}\right)$, where $S_1, \ldots, S_m$ are the finitely many sorts with $n_{S_i} \neq 0$ ($1 \leq i \leq m$). Note that $\N_\calS \hookrightarrow \V^\calS$ is closed under (conical) finite coproducts, which are formed pointwise in $\V^\calS$. 

We first claim that the full sub-$\V$-category $\N_\calS \hookrightarrow \V^\calS$ is \emph{dense} (in the enriched sense; see \cite[Chapter 5]{Kelly}). First, for each $S \in \calS$, the $\V$-category $\V^\calS$ contains the ($\V$-valued) \emph{representable} corresponding to $S$, which is the $\calS$-sorted object $\calS(S, -)$ of $\V$ with $\calS(S, -)_S = I = 1 \cdot I$ and $\calS(S, -)_{T} = 0 = 0 \cdot I$ for all $T \neq S \in \calS$. So $\N_\calS$ clearly contains the representables, and then since the full sub-$\V$-category of $\V^\calS$ consisting of the representables is dense (by e.g.~\cite[Proposition 5.16]{Kelly}), we deduce from \cite[Theorem 5.13]{Kelly} that $\N_\calS \hookrightarrow \V^\calS$ is dense.

Thus, the full sub-$\V$-category $\N_\calS \hookrightarrow \V^\calS$ is a small \emph{subcategory of arities} in the sense of \cite[Definition 3.1]{Pres}.
\end{para}

By a \emph{$\V$-category over $\V^\calS$} we mean an object of the (strict) slice category $\VCAT/\V^\calS$, i.e.~a $\V$-category $\A$ equipped with a $\V$-functor $U : \A \to \V^\calS$.

\begin{para}[\textbf{Free-form $\N_\calS$-signatures and their algebras}]
\label{EP_sig_para}
We now recall some relevant notions from \cite{EP} (in view of \cite[Remark 5.18]{EP}), specialized to the small subcategory of arities $\N_\calS \hookrightarrow \V^\calS$ \eqref{N_S_para}. A \emph{free-form $\N_\calS$-signature} \cite[Definition 5.1]{EP} is a set $\scrS$ of operation symbols equipped with an assignment to each operation symbol $\sigma \in \scrS$ of an \emph{(input) arity} $J = J_\sigma \in \ob\N_\calS$ and a \emph{parameter (object)} $P = P_\sigma \in \ob\V^\calS$. An \emph{$\scrS$-algebra} $A$ \cite[Definition 5.2]{EP} is an object $A$ of $\V^\calS$ equipped with, for each $\sigma \in \scrS$ of arity $J$ and parameter $P$, a $\V^\calS$-morphism
\[ \sigma^A : \V^\calS\left(J, A\right) \tensor P \to A. \] With $J = \left(n_{S_1}, \ldots, n_{S_m}\right)$ and $n_i := n_{S_i}$ ($1 \leq i \leq m$), we have $\V^\calS(J, A) \cong A_{S_1}^{n_1} \times \ldots \times A_{S_m}^{n_m}$. Writing $P = \left(P_S\right)_{S \in \calS}$, we then have
\begin{equation}
\label{EP_sig_iso}
\V^\calS(J, A) \tensor P \cong \left(P_S \tensor \left(A_{S_1}^{n_1} \times \ldots \times A_{S_m}^{n_m}\right)\right)_{S \in \calS}.
\end{equation}
Then $\sigma^A : \V^\calS\left(J, A\right) \tensor P \to A$ is completely determined by, for each sort $S \in \calS$, a $\V$-morphism
\begin{equation}
\label{EP_sig_eq}
\sigma^A_S : P_S \tensor \left(A_{S_1}^{n_1} \times \ldots \times A_{S_m}^{n_m}\right) \to A_S.
\end{equation} 
Given $\scrS$-algebras $A$ and $B$, a \emph{morphism of $\scrS$-algebras $f : A \to B$} \cite[Definition 5.2]{EP} is a morphism $f : A \to B$ of $\V^\calS$ that makes the following diagram commute for each $\sigma \in \scrS$ of arity $J$ and parameter $P$:
\[\begin{tikzcd}
	{\V^\calS(J, A) \tensor P} &&& {\V^\calS(J, B) \tensor P} \\
	\\
	{A} &&& {B}.
	\arrow["{\V^\calS(J, f) \tensor 1_P}", from=1-1, to=1-4]
	\arrow["{\sigma^A}"', from=1-1, to=3-1]
	\arrow["{\sigma^B}", from=1-4, to=3-4]
	\arrow["{f}"', from=3-1, to=3-4]
\end{tikzcd}\] 
We then have an ordinary category $\scrS\Alg_0$ of $\scrS$-algebras, which (in view of \cite[Definition 5.2]{EP}) underlies a $\V$-category $\scrS\Alg$ in which each hom-object $\scrS\Alg(A, B)$ is defined as the \emph{pairwise equalizer} \cite[2.1]{Commutants} of the following $\scrS$-indexed family of parallel pairs in $\V$ ($\sigma \in \scrS$):
\[ \V^\calS(A, B) \xrightarrow{\V^\calS\left(J_\sigma, -\right)_{AB} \tensor P_\sigma} \V^\calS\left(\V^\calS\left(J_\sigma, A\right) \tensor P_\sigma, \V^\calS\left(J_\sigma, B\right) \tensor P_\sigma\right) \xrightarrow{\V^\calS\left(1, \sigma^B\right)} \V^\calS\left(\V^\calS\left(J_\sigma, A\right) \tensor P_\sigma, B\right), \]
\[ \V^\calS(A, B) \xrightarrow{\V^\calS\left(\sigma^A, 1\right)} \V^\calS\left(\V^\calS\left(J_\sigma, A\right) \tensor P_\sigma, B\right). \]
Composition in $\scrS\Alg$ is then defined in the unique manner that allows the resulting subobjects $U^\scrS_{AB} : \scrS\Alg(A, B) \rightarrowtail \V^\calS(A, B)$ ($A, B \in \scrS\Alg$) to serve as the structural morphisms of a faithful $\V$-functor $U^\scrS : \scrS\Alg \to \V^\calS$ that sends each $\scrS$-algebra $A$ to its underlying $\calS$-sorted carrier object $A \in \ob\V^\calS$. We may thus regard $\scrS\Alg$ as a $\V$-category over $\V^\calS$. 
\end{para}

\begin{para}[\textbf{Diagrammatic $\N_\calS$-presentations and their algebras}]
\label{EP_eq_para}
Given a free-form $\N_\calS$-signature $\scrS$, a \emph{natural $\scrS$-operation}\footnote{In \cite[Definition 5.10]{EP} the concept of \emph{$\V$-natural $\scrS$-operation} is defined, but $\V$-naturality reduces to ordinary naturality in the present context because $\V(I, -) : \V \to \Set$ is faithful.} \cite[Definition 5.10]{EP} is a natural transformation $\omega : \V^\calS\left(J, U^\scrS-\right) \tensor P \Longrightarrow U^\scrS : \scrS\Alg \to \V^\calS$ with specified \emph{(input) arity} $J \in \ob\N_\calS$ and \emph{parameter (object)} $P \in \ob\V^\calS$. Explicitly, a natural $\scrS$-operation $\omega$ consists of $\calS$-sorted $\V$-morphisms $\omega_A : \V^\calS(J, A) \tensor P \to A$ ($A \in \ob\scrS\Alg$) such that the following diagram commutes for each morphism $f : A \to B$ of $\scrS\Alg$:
\[\begin{tikzcd}
	{\V^\calS(J, A) \tensor P} &&& {\V^\calS(J, B) \tensor P} \\
	\\
	{A} &&& {B}.
	\arrow["{\V^\calS(J, f) \tensor 1_{\calP}}", from=1-1, to=1-4]
	\arrow["{\omega_A}"', from=1-1, to=3-1]
	\arrow["{\omega_B}", from=1-4, to=3-4]
	\arrow["{f}"', from=3-1, to=3-4]
\end{tikzcd}\]  
A \emph{diagrammatic $\scrS$-equation} \cite[Definition 5.10]{EP}, written $\omega \doteq \nu$, is a pair of natural $\scrS$-operations $\omega, \nu : \V^\calS\left(J, U^\scrS-\right) \tensor P \Longrightarrow U^\scrS$ with the same arity $J$ and parameter $P$. An $\scrS$-algebra $A$ \emph{satisfies} a diagrammatic $\scrS$-equation $\omega \doteq \nu$ if $\omega_A = \nu_A : \V^\calS(J, A) \tensor P \to A$. Finally, a \emph{diagrammatic $\N_\calS$-presentation} \cite[Definition 5.12]{EP} is a pair $\scrT = (\scrS, \scrE)$ consisting of a free-form $\N_\calS$-signature $\scrS$ and a set $\scrE$ of diagrammatic $\scrS$-equations. We write $\scrT\Alg$ for the full sub-$\V$-category of $\scrS\Alg$ consisting of the \emph{$\scrT$-algebras}, i.e.~the $\scrS$-algebras that satisfy all of the diagrammatic $\scrS$-equations in $\scrE$. The forgetful $\V$-functor $U^\scrS : \scrS\Alg \to \V^\calS$ restricts to a forgetful $\V$-functor $U^\scrT : \scrT\Alg \to \V^\calS$, and we may thus regard $\scrT\Alg$ as a $\V$-category over $\V^\calS$.     
\end{para} 

\noindent In the next two propositions, we show that $\V$-enriched $\calS$-sorted equational theories (in the sense of Definition \ref{eqn_theory}) have the same ``expressive power" as diagrammatic $\N_\calS$-presentations. This will allow us to ultimately show in Theorem \ref{mnd_pres_thm} (see also Remark \ref{amenable_rmk}) that $\V$-enriched $\calS$-sorted equational theories have the same ``expressive power" as certain $\V$-enriched monads on $\V^\calS$ (as well as the \emph{$\N_\calS$-theories} mentioned in the Introduction and defined in \ref{Jary_para}). 

Recall from Remarks \ref{enriched_rmk} and \ref{enriched_rmk_2} that in the context of the present section (where $\V$ is symmetric monoidal closed), the ordinary category $\Sigma\Alg$ of $\Sigma$-algebras for a $\V$-enriched $\calS$-sorted signature $\Sigma$ underlies a $\V$-category $\Sigma\Alg$ over $\V^\calS$, while the ordinary category $\calT\Alg$ of $\calT$-algebras for a $\V$-enriched $\calS$-sorted equational theory $\calT$ underlies a $\V$-category $\calT\Alg$ over $\V^\calS$.

\begin{prop}
\label{equiv_presentations_prop1}
\
\begin{enumerate}[leftmargin=*]
\item For each $\V$-enriched $\calS$-sorted signature $\Sigma$, there is a free-form $\N_\calS$-signature $\scrS_\Sigma$ such that $\Sigma\Alg \cong \scrS_\Sigma\Alg$ in $\VCAT/\V^\calS$. 

\item For each algebraic $\Sigma$-equation $\omega \doteq \nu$, there is a diagrammatic $\scrS_\Sigma$-equation $\bar{\omega} \doteq \bar{\nu}$ such that a $\Sigma$-algebra $A$ satisfies $\omega \doteq \nu$ iff the corresponding $\scrS_\Sigma$-algebra satisfies $\bar{\omega} \doteq \bar{\nu}$. 

\item For each $\V$-enriched $\calS$-sorted equational theory $\calT$ (in the sense of Definition \ref{eqn_theory}), there is a diagrammatic $\N_\calS$-presentation $\scrT_\calT$ such that $\calT\Alg \cong \scrT_\calT\Alg$ in $\VCAT/\V^\calS$.
\end{enumerate} 
\end{prop}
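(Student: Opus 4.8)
The plan is to prove all three parts at once by constructing, for each $\V$-enriched $\calS$-sorted signature $\Sigma$, a free-form $\N_\calS$-signature $\scrS_\Sigma$ that packages exactly the same data, and then transporting algebraic operations and equations along the resulting isomorphism. For part (1), given $\sigma \in \Sigma$ of type $((S_1, \ldots, S_n), S, P)$, I would take the arity $J_\sigma = (n_T)_{T \in \calS} \in \ob\N_\calS$ with $n_T = \#\{\,i : S_i = T\,\}$ (so that $\V^\calS(J_\sigma, A)$ is canonically isomorphic to $A_{S_1} \times \ldots \times A_{S_n}$ via the symmetry and reindexing isomorphisms of products), together with the parameter $Q_\sigma \in \ob\V^\calS$ whose $S$-component is $P$ and all of whose other components are the initial object $0$ of $\V$; then $\scrS_\Sigma$ consists of the operation symbols $\bar\sigma$ ($\sigma \in \Sigma$) of arity $J_\sigma$ and parameter $Q_\sigma$. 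The crucial observation, which genuinely uses that $\V$ is symmetric monoidal \emph{closed} so that $(-) \tensor Z$ is cocontinuous (whence $0 \tensor Z \cong 0$) and $[0, Z] \cong 1$ (the terminal object of $\V$), is that the $\calS$-sorted object $\V^\calS(J_\sigma, A) \tensor Q_\sigma$ (tensor computed pointwise by \ref{Vcat_para}) is concentrated at the sort $S$, with $S$-component isomorphic to $P \tensor (A_{S_1} \times \ldots \times A_{S_n})$; hence an $\scrS_\Sigma$-algebra structure map $\bar\sigma^A : \V^\calS(J_\sigma, A) \tensor Q_\sigma \to A$ in $\V^\calS$ carries no data at the sorts $T \neq S$ and amounts precisely to a $\V$-morphism $\sigma^A : P \tensor (A_{S_1} \times \ldots \times A_{S_n}) \to A_S$ at the sort $S$.

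This yields, for each $A \in \ob\V^\calS$, a bijection between $\Sigma$-algebra structures (Definition \ref{sig_algebra}) and $\scrS_\Sigma$-algebra structures (\ref{EP_sig_para}) on $A$, under which the defining commuting squares for $\Sigma$-algebra morphisms and for $\scrS_\Sigma$-algebra morphisms correspond (the components at $T \neq S$ commute trivially, being the unique map $0 \to B_T$), so the underlying ordinary categories of $\Sigma\Alg$ and $\scrS_\Sigma\Alg$ are isomorphic over $\V^\calS$. To upgrade this to an isomorphism in $\VCAT/\V^\calS$, I would compare hom-objects: both $\Sigma\Alg(A, B)$ (\ref{enriched_rmk}) and $\scrS_\Sigma\Alg(A, B)$ (\ref{EP_sig_para}) are pairwise equalizers of parallel-pair families indexed by the operation symbols, and the same collapse of $0 \tensor (-)$ and $[0, -]$ identifies the $\bar\sigma$-indexed pair with the $\sigma$-indexed pair, so the two equalizers coincide; since in both cases composition is the unique one making the forgetful $\V$-functor faithful, the ordinary isomorphism lifts to an isomorphism of $\V$-categories over $\V^\calS$. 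For part (2), under this identification an algebraic $\Sigma$-operation $\omega : P \tensor U^{\Sbar} \Longrightarrow U^S$ (Definition \ref{alg_operation}) transports to the family $\bar\omega_A : \V^\calS(J, A) \tensor Q \to A$ that is the unique morphism $0 \to A_T$ at the sorts $T \neq S$ and is $\omega_A$ (via the symmetry/reindexing isomorphism) at the sort $S$; its naturality follows from that of $\omega$ since the other components are trivially natural, so $\bar\omega$ is a natural $\scrS_\Sigma$-operation of arity $J$ and parameter $Q$. Applying this to both sides of an algebraic $\Sigma$-equation $\omega \doteq \nu$ gives a diagrammatic $\scrS_\Sigma$-equation $\bar\omega \doteq \bar\nu$, and since two parallel morphisms into $A$ in $\V^\calS$ are equal iff they agree at every sort — agreement at $T \neq S$ being automatic — a $\Sigma$-algebra $A$ satisfies $\omega \doteq \nu$ iff the corresponding $\scrS_\Sigma$-algebra satisfies $\bar\omega \doteq \bar\nu$.

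For part (3), given $\calT = (\Sigma, \E)$ I would set $\scrT_\calT := (\scrS_\Sigma, \{\,\bar{\sfe} \mid \sfe \in \E\,\})$, with $\scrS_\Sigma$ as in (1) and $\bar{\sfe} := (\bar\omega \doteq \bar\nu)$ for $\sfe = (\omega \doteq \nu)$ as in (2); then $\calT\Alg$ and $\scrT_\calT\Alg$ are the full sub-$\V$-categories of $\Sigma\Alg$ and $\scrS_\Sigma\Alg$ cut out by $\E$ and $\{\bar{\sfe}\}$ respectively, and by part (2) the isomorphism $\Sigma\Alg \cong \scrS_\Sigma\Alg$ of part (1) carries one full subcategory onto the other, hence restricts (being over $\V^\calS$) to the desired isomorphism $\calT\Alg \cong \scrT_\calT\Alg$ in $\VCAT/\V^\calS$. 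I expect the main obstacle to be bookkeeping rather than anything conceptual: one must carefully distinguish the sequence $(S_1, \ldots, S_n)$ of input sorts from the multiset-indexed arity $J \in \ob\N_\calS$, keep the symmetry and reindexing isomorphisms of products under control throughout, and verify cleanly the collapses $0 \tensor Z \cong 0$ and $[0, Z] \cong 1$ in $\V$ — the one place where the closedness hypothesis of this section is genuinely used, and exactly what makes the ``parameter concentrated at a sort'' device work at the level of $\V$-enriched hom-objects and not merely underlying ordinary categories.
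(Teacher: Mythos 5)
Your construction is exactly the paper's: the arity $J_\sigma$ records the multiplicities of the input sorts, the parameter is the $\calS$-sorted object concentrated at the output sort (the paper's $\calP_{P,S}$), the collapse $0\tensor Z\cong 0$ makes the structure map carry data only at the output sort, and equations and the $\V$-enrichment are transported across the resulting identity-on-homs isomorphism, with (3) following by restricting to full subcategories. The proposal is correct and matches the paper's proof in all essentials, including the bookkeeping caveat about reindexing repeated input sorts.
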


\begin{proof}
(3) readily follows from (1) and (2). For (1), we define the free-form $\N_\calS$-signature $\scrS_\Sigma$ as follows. For each operation symbol $\sigma \in \Sigma$ of type $\left(\Sbar = (S_1, \ldots, S_m), S, P\right)$ and each $1 \leq i \leq m$, let $1 \leq n_i \leq m$ be the number of occurrences of $S_i$ in $(S_1, \ldots, S_m)$. We write $J_{\Sbar}$ for the object of $\N_\calS$ with $\left(J_{\Sbar}\right)_{S_i} := n_i \cdot I$ for each $1 \leq i \leq m$ and $\left(J_{\Sbar}\right)_{T} := 0 \cdot I$ otherwise, and we write $\calP_{P, S}$ for the object of $\V^\calS$ with $\left(\calP_{P, S}\right)_{S} := P$ and $\left(\calP_{P, S}\right)_{T} := 0$ otherwise. We then declare that $\scrS_\Sigma$ has, for each operation symbol $\sigma \in \Sigma$ of type $\left(\Sbar, S, P\right)$ as above, an operation symbol $\bar{\sigma}$ with arity $J_{\Sbar}$ and parameter $\calP_{P, S}$. In view of the discussion in \ref{EP_sig_para}, it then readily follows that we have an isomorphism of ordinary categories $\Sigma\Alg \cong \scrS_\Sigma\Alg_0$ in $\mathsf{CAT}/\V^\calS$ (which is therefore identity-on-homs). Specifically, given a $\Sigma$-algebra $A$, the corresponding $\scrS_\Sigma$-algebra $\widehat{A}$ has the same underlying $\calS$-sorted object $A$ of $\V$, and for each $\sigma \in \Sigma$ as above, the $\calS$-sorted $\V$-morphism
\[ \bar{\sigma}^{\widehat{A}} : \V^\calS\left(J_{\Sbar}, A\right) \tensor \calP_{P, S} \to A \] is (by the definition of $\calP_{P, S}$ and the fact that $0 \tensor X \cong 0$ for each object $X$ of $\V$) completely determined by its $S$-component, which we define to be
\[ \left(\bar{\sigma}^{\widehat{A}}\right)_S := \sigma^A : P \tensor \left(A_{S_1} \times \ldots \times A_{S_m}\right) \to A_S. \] The desired isomorphism of $\V$-categories $\Sigma\Alg \cong \scrS_\Sigma\Alg$ in $\VCAT/\V^\calS$ then follows from the fact that the underlying ordinary categories over $\V^\calS$ are isomorphic, and the fact that the $\V$-category structure of $\Sigma\Alg$ defined in Remark \ref{enriched_rmk} is readily seen to be the transport of the $\V$-category structure of $\scrS_\Sigma\Alg$ across this (identity-on-homs) isomorphism.

To prove (2), let $\omega \doteq \nu$ be an algebraic $\Sigma$-equation of type $\left(\Sbar = (S_1, \ldots, S_m), S, P\right)$. So $\omega, \nu : P \tensor U^{\Sbar} \Longrightarrow U^S : \Sigma\Alg \to \V^\calS$ are algebraic $\Sigma$-operations, and thus for each $\Sigma$-algebra $A$ we have parallel $\V$-morphisms
\[ \omega_A, \nu_A : P \tensor \left(A_{S_1} \times \ldots \times A_{S_m}\right) \rightrightarrows A_S. \] We now define a diagrammatic $\scrS_\Sigma$-equation $\bar{\omega} \doteq \bar{\nu}$ with arity $J_{\Sbar}$ and parameter $\calP_{P, S}$. For each $\scrS_\Sigma$-algebra $A$, with corresponding $\Sigma$-algebra $\overline{A}$, we must define $\calS$-sorted $\V$-morphisms
\[ \bar{\omega}_A, \bar{\nu}_A : \V^\calS\left(J_{\Sbar}, A\right) \tensor \calP_{P, S} \rightrightarrows A. \] By the definition of $\calP_{P, S}$, these $\calS$-sorted $\V$-morphisms will be completely determined by their $S$-components, which we define to be
\[ \left(\bar{\omega}_A\right)_S := \omega_{\overline{A}}, \left(\bar{\nu}_A\right)_S := \nu_{\overline{A}} : P \tensor \left(A_{S_1} \times \ldots \times A_{S_m}\right) \rightrightarrows A_S. \] Then $\bar{\omega}$ and $\bar{\nu}$ are evidently algebraic $\scrS_\Sigma$-operations. A $\Sigma$-algebra $A$ satisfies $\omega \doteq \nu$ iff $\omega_A = \nu_A$, iff $\omega_{\overline{\widehat{A}}} = \nu_{\overline{\widehat{A}}}$, iff $\left(\bar{\omega}_{\widehat{A}}\right)_S = \left(\bar{\nu}_{\widehat{A}}\right)_S$, iff $\bar{\omega}_{\widehat{A}} = \bar{\nu}_{\widehat{A}}$, iff the corresponding $\scrS_\Sigma$-algebra $\widehat{A}$ satisfies the diagrammatic $\scrS_\Sigma$-equation $\bar{\omega} \doteq \bar{\nu}$, as desired. 
\end{proof}

\begin{prop}
\label{equiv_presentations_prop2}
\
\begin{enumerate}[leftmargin=*]
\item For each free-form $\N_\calS$-signature $\scrS$, there is a $\V$-enriched $\calS$-sorted signature $\Sigma_\scrS$ such that $\scrS\Alg \cong \Sigma_\scrS\Alg$ in $\VCAT/\V^\calS$. 

\item For each diagrammatic $\scrS$-equation $\sfe$, there is a set $\calE_\sfe$ of algebraic $\Sigma_\scrS$-equations such that an $\scrS$-algebra satisfies $\sfe$ iff the corresponding $\Sigma_\scrS$-algebra satisfies each algebraic $\Sigma_\scrS$-equation in $\calE_\sfe$. 

\item For each diagrammatic $\N_\calS$-presentation $\scrT$, there is a $\V$-enriched $\calS$-sorted equational theory $\calT_\scrT$ (in the sense of Definition \ref{eqn_theory}) such that $\scrT\Alg \cong \calT_\scrT\Alg$ in $\VCAT/\V^\calS$.
\end{enumerate}  
\end{prop}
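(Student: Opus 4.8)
The plan is to mirror the structure of the proof of Proposition \ref{equiv_presentations_prop1}, now going in the reverse direction: construct a $\V$-enriched $\calS$-sorted signature $\Sigma_\scrS$ from a free-form $\N_\calS$-signature $\scrS$, check that the two notions of algebra coincide as $\V$-categories over $\V^\calS$, translate diagrammatic equations into algebraic equations, and then assemble (3) from (1) and (2). As in the earlier proof, part (3) will be a formal consequence of (1) and (2): given a diagrammatic $\N_\calS$-presentation $\scrT = (\scrS, \scrE)$, take $\Sigma_\scrS$ from (1), let $\widehat{\calE} := \bigcup_{\sfe \in \scrE} \calE_\sfe$ from (2), set $\calT_\scrT := (\Sigma_\scrS, \widehat{\calE})$, and observe that the isomorphism $\scrS\Alg \cong \Sigma_\scrS\Alg$ of (1) restricts to an isomorphism $\scrT\Alg \cong \calT_\scrT\Alg$ of full sub-$\V$-categories, since an $\scrS$-algebra satisfies every equation in $\scrE$ iff the corresponding $\Sigma_\scrS$-algebra satisfies every equation in $\widehat{\calE}$.

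For (1), the key point is the reverse of the decomposition \eqref{EP_sig_iso}--\eqref{EP_sig_eq}: an operation symbol $\sigma \in \scrS$ has an arity $J = (n_{S_1}, \ldots, n_{S_m}) \in \ob\N_\calS$ and a parameter $P = (P_S)_{S \in \calS} \in \ob\V^\calS$, and for each $\scrS$-algebra $A$ the structure map $\sigma^A : \V^\calS(J, A) \tensor P \to A$ is, by \eqref{EP_sig_iso}, equivalent to a family of $\V$-morphisms $\sigma^A_S : P_S \tensor (A_{S_1}^{n_1} \times \ldots \times A_{S_m}^{n_m}) \to A_S$, one for each sort $S \in \calS$. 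Thus a single $\scrS$-operation symbol of arity $J$ and parameter $P$ decomposes into a \emph{family} of $\V$-enriched operation symbols indexed by $S \in \calS$: for each $S$, introduce an operation symbol $\sigma_S$ of type $(\Sbar_J, S, P_S)$, where $\Sbar_J \in \calS^*$ is any fixed tuple listing each sort $S_i$ exactly $n_i$ times (e.g. $n_1$ copies of $S_1$, then $n_2$ copies of $S_2$, etc.). Define $\Sigma_\scrS$ to consist of all these $\sigma_S$ for $\sigma \in \scrS$ and $S \in \calS$. The discussion in \ref{EP_sig_para} together with Definitions \ref{sig_algebra} and the $\V$-enriched structure of Remark \ref{enriched_rmk} then yields an identity-on-objects, identity-on-homs isomorphism $\scrS\Alg \cong \Sigma_\scrS\Alg$ over $\V^\calS$: a $\Sigma_\scrS$-algebra assigns to each $\sigma_S$ a $\V$-morphism $P_S \tensor (A_{S_1} \times \ldots) \to A_S$, and reassembling these over $S$ via the pointwise description of $\V^\calS$ reconstructs $\sigma^A$, with morphisms and hom-objects matching because both are cut out of $\V^\calS(A,B)$ by the same pairwise equalizers.

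For (2), a diagrammatic $\scrS$-equation $\sfe \equiv (\omega \doteq \nu)$ with arity $J$ and parameter $P$ consists of natural $\scrS$-operations $\omega, \nu : \V^\calS(J, U^\scrS-) \tensor P \Rightarrow U^\scrS$, so for each $\scrS$-algebra $A$ and each sort $S$ we get parallel $\V$-morphisms $(\omega_A)_S, (\nu_A)_S : P_S \tensor (A_{S_1}^{n_1} \times \ldots \times A_{S_m}^{n_m}) \to A_S$ that are natural in $A$. Transporting along the isomorphism of (1), these become algebraic $\Sigma_\scrS$-operations $\omega^S \doteq \nu^S$ of type $(\Sbar_J, S, P_S)$ in the sense of Definitions \ref{alg_operation}--\ref{equation}; set $\calE_\sfe := \{\omega^S \doteq \nu^S \mid S \in \calS\}$. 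Since $\omega_A = \nu_A$ as $\calS$-sorted $\V$-morphisms iff $(\omega_A)_S = (\nu_A)_S$ for every $S$, an $\scrS$-algebra $A$ satisfies $\sfe$ iff the corresponding $\Sigma_\scrS$-algebra satisfies each member of $\calE_\sfe$. The main obstacle — though it is bookkeeping rather than a genuine difficulty — is verifying carefully that the $\V$-enriched hom-objects match under the isomorphism of (1): one must check that the pairwise equalizer defining $\scrS\Alg(A,B)$ in \ref{EP_sig_para} coincides, after the reindexing $\sigma \mapsto (\sigma_S)_{S \in \calS}$, with the pairwise equalizer defining $\Sigma_\scrS\Alg(A,B)$ in Remark \ref{enriched_rmk}, using that $\V^\calS(J, A) \tensor P \cong \bigl(P_S \tensor (A_{S_1}^{n_1} \times \ldots \times A_{S_m}^{n_m})\bigr)_{S \in \calS}$ and that a cone over an $\calS$-indexed family of parallel pairs in $\V^\calS$ is pointwise a cone over the corresponding families in $\V$. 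Once this is in place, the rest is a routine transport-of-structure argument exactly parallel to the proof of Proposition \ref{equiv_presentations_prop1}.
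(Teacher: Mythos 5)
Your proposal is correct and follows essentially the same route as the paper: the same sort-indexed decomposition of each $\scrS$-operation symbol into symbols $\sigma_S$ of type $(\Sbar_J, S, P_S)$, the same componentwise translation of a diagrammatic equation into the set $\{\omega^S \doteq \nu^S \mid S \in \calS\}$, and the same transport-of-structure argument for the enriched hom-objects. Part (3) is assembled from (1) and (2) exactly as in the paper.
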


\begin{proof}
(3) readily follows from (1) and (2). To prove (1), we define the $\V$-enriched $\calS$-sorted signature $\Sigma_\scrS$ as follows. For each operation symbol $\sigma \in \scrS$ of arity $J = \left(n_{S_1}, \ldots, n_{S_m}\right)$ and parameter $P = \left(P_S\right)_{S \in \calS}$, we declare that $\Sigma_\scrS$ will have, for each sort $S \in \calS$, an operation symbol $\sigma_S$ with input sorts $(S_1, \ldots, S_1, \ldots, S_m, \ldots, S_m)$ (with $n_i := n_{S_i}$ occurrences of $S_i$ for each $1 \leq i \leq m$), output sort $S$, and parameter $P_S$. In view of the discussion in \ref{EP_sig_para}, it readily follows that we have an isomorphism of ordinary categories $\scrS\Alg_0 \cong \Sigma_\scrS\Alg$ in $\mathsf{CAT}/\V^\calS$ (which is therefore identity-on-homs). Specifically, given an $\scrS$-algebra $A$, the corresponding $\Sigma_\scrS$-algebra $\widehat{A}$ has the same underlying $\calS$-sorted object of $\V$, and for each $\sigma \in \scrS$ as above and each $S \in \calS$, we have (with the notation of \eqref{EP_sig_eq})
\[ \sigma_S^{\widehat{A}} := \sigma^A_S : P_S \tensor \left(A_{S_1}^{n_1} \times \ldots \times A_{S_m}^{n_m}\right) \to A_S. \]
The desired isomorphism of $\V$-categories $\scrS\Alg \cong \Sigma_\scrS\Alg$ in $\VCAT/\V^\calS$ then follows from the fact that the underlying ordinary categories over $\V^\calS$ are isomorphic, and the fact that the $\V$-category structure of $\Sigma_\scrS\Alg$ (see Remark \ref{enriched_rmk}) is readily seen to be the transport of the $\V$-category structure of $\scrS\Alg$ across this (identity-on-homs) isomorphism.

To prove (2), let $\sfe \equiv (\omega \doteq \nu)$ be a diagrammatic $\scrS$-equation of arity $J$ (as above) and parameter $P$. So $\omega, \nu : \V^\calS\left(J, U^\scrS-\right) \tensor P \Longrightarrow U^\scrS : \scrS\Alg \to \V^\calS$ are natural $\scrS$-operations, and thus for each $\scrS$-algebra $A$ we have (in view of \eqref{EP_sig_iso}) parallel $\calS$-sorted $\V$-morphisms \[ \omega_A, \nu_A : \left(P_S \tensor \left(A_{S_1}^{n_1} \times \ldots \times A_{S_m}^{n_m}\right)\right)_{S \in \calS} \rightrightarrows \left(A_S\right)_{S \in \calS}. \]
For each sort $S \in \calS$, we now define an algebraic $\Sigma_\scrS$-equation $\omega^S \doteq \nu^S$ with input sorts $(S_1, \ldots, S_1, \ldots, S_m, \ldots, S_m)$ (with $n_i$ occurrences of $S_i$ for each $1 \leq i \leq m$), output sort $S$, and parameter $P_S$. For each $\Sigma_\scrS$-algebra $A$, with corresponding $\scrS$-algebra $\overline{A}$, we define
\[ \omega^S_A := \left(\omega_{\overline{A}}\right)_S, \nu^S_A := \left(\nu_{\overline{A}}\right)_S : P_S \tensor \left(A_{S_1}^{n_1} \times \ldots \times A_{S_m}^{n_m}\right) \rightrightarrows A_S. \] Then $\omega^S, \nu^S$ are evidently algebraic $\Sigma_\scrS$-operations. An $\scrS$-algebra $A$ satisfies $\omega \doteq \nu$ iff $\omega_A = \nu_A$, iff $\left(\omega_A\right)_S =  \left(\nu_A\right)_S$ for each sort $S \in \calS$, iff $\left(\omega_{\overline{\widehat{A}}}\right)_S =  \left(\nu_{\overline{\widehat{A}}}\right)_S$ for each sort $S \in \calS$, iff $\omega^S_{\widehat{A}} = \nu^S_{\widehat{A}}$ for each sort $S \in \calS$, iff the corresponding $\Sigma_\scrS$-algebra $\widehat{A}$ satisfies the algebraic $\Sigma_\scrS$-equation $\omega^S \doteq \nu^S$ for each sort $S \in \calS$. We therefore let $\calE_\sfe$ be the set of algebraic $\Sigma_\scrS$-equations $\calE_\sfe := \left\{\omega^S \doteq \nu^S \mid S \in \calS\right\}$.      
\end{proof}

\begin{para}
\label{eleuth_para}
We write $j : \N_\calS \hookrightarrow \V^\calS$ for the inclusion and $\Phi_{\N_\calS}$ for the class of all small weights $\V^\calS(j-, X) : \N_\calS^\op \to \V$ for $X \in \ob\V^\calS$. We say that the small subcategory of arities $\N_\calS \hookrightarrow \V^\calS$ is \emph{eleutheric} \cite[Definition 3.3]{Pres} if each representable $\V$-functor $\V^\calS(J, -) : \V^\calS \to \V$ ($J \in \ob\N_\calS$) preserves weighted $\Phi_{\N_\calS}$-colimits, or equivalently preserves ($\V$-enriched) left Kan extensions along $j$.\footnote{\cite[Definition 3.3]{Pres} also requires that $\V^\calS$ have weighted $\Phi_{\N_\calS}$-colimits, but this is true in the present context because $\N_\calS$ is small and $\V^\calS$ is cocomplete (as a $\V$-category), since $\V$ is cocomplete \eqref{top_para}.}
\end{para}

\noindent Given a small $\V$-category $\A$, we write $[\A, \V]$ for the $\V$-functor $\V$-category. The following result essentially generalizes \cite[Example 7.5.5]{EAT}, which treats the case where $\calS$ is a singleton.

\begin{theo}
\label{pi_category_thm}
Suppose that $(\V, \tensor, I) = (\V, \times, 1)$, so that $\V$ is cartesian closed. Then the small subcategory of arities $\N_\calS \hookrightarrow \V^\calS$ is eleutheric.
\end{theo}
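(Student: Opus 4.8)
The plan is to verify directly the criterion recalled in \eqref{eleuth_para}: for each $J \in \ob\N_\calS$, the representable $\V$-functor $\V^\calS(J,-)\colon \V^\calS \to \V$ must preserve $\Phi_{\N_\calS}$-colimits, i.e.\ colimits of the form $W \star D$ with $W = \V^\calS(j-,X)\colon \N_\calS^\op \to \V$ (for some $X \in \ob\V^\calS$) and $D\colon \N_\calS \to \V^\calS$. Equivalently, by \eqref{eleuth_para}, that $\V^\calS(J,-)$ preserves left Kan extensions along $j$.

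First I would reduce to projections. Writing $R_S$ for the representable $\calS$-sorted object at $S$ (denoted $\calS(S,-)$ in \eqref{N_S_para}), every object $J = (n_{S_1},\ldots,n_{S_m})$ of $\N_\calS$ is, by definition, a finite conical coproduct $J \cong \coprod_{l=1}^{N} R_{T_l}$ in $\V^\calS$ (with $N = \sum_i n_{S_i}$ and the $T_l$ listing the sorts $S_i$ with multiplicity $n_{S_i}$), such coproducts being formed pointwise. Since $\V^\calS(R_T,-) \cong \pi_T$, the $T$-th projection $\V^\calS \to \V$, we obtain $\V^\calS(J,-) \cong \prod_{l=1}^{N} \pi_{T_l}$. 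Each $\pi_T$ preserves all weighted colimits, as colimits in the product $\V$-category $\V^\calS$ are formed pointwise (cf.\ \eqref{Vcat_para}, \eqref{prod_top_para}). Hence $\V^\calS(J, W\star D) = \prod_l \pi_{T_l}(W\star D) = \prod_l (W \star \pi_{T_l}D)$, and this equals $W \star \prod_l \pi_{T_l}D = W \star \V^\calS(J, D-)$ as soon as $W\star(-)\colon [\N_\calS,\V]\to \V$ preserves finite products. So it suffices to show that $W\star(-)$ preserves finite products for every $W \in \Phi_{\N_\calS}$.

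This is the heart of the argument, and it is here that both hypotheses enter: cartesian closure of $\V$ (so that $\times$ preserves colimits in each variable and copowers are just binary products), and the fact that $\N_\calS$ is closed under finite coproducts in $\V^\calS$ \eqref{N_S_para} — which are conical and hence preserved by $j$ — so that $jK \sqcup jL \cong j(K\sqcup L)$ and $\V^\calS(jK\sqcup jL, X) \cong \V^\calS(jK,X)\times \V^\calS(jL,X)$. For the nullary case, $W\star\mathbf{1} = \int^{M\in\N_\calS}\V^\calS(jM,X)$ is the colimit of $W$ over $\N_\calS^\op$; since $\N_\calS$ has an initial object $\varnothing$ (the empty coproduct), $\varnothing$ is terminal in $\N_\calS^\op$, and the colimit is its value there, namely $\V^\calS(j\varnothing, X) = \V^\calS(0,X)\cong 1$. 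For the binary case, given $D^1,D^2\colon \N_\calS\to\V$ I would compute
\[
(W\star D^1)\times(W\star D^2)\;\cong\;\int^{K,L}\bigl(\V^\calS(jK,X)\times\V^\calS(jL,X)\bigr)\cdot\bigl(D^1K\times D^2L\bigr)\;\cong\;\int^{K,L} W(K\sqcup L)\cdot\bigl(D^1K\times D^2L\bigr),
\]
then substitute the co-Yoneda expansion $W(K\sqcup L)\cong\int^{M}W(M)\cdot\N_\calS(K\sqcup L,M)$, apply Fubini, and use $\N_\calS(K\sqcup L,M)\cong\N_\calS(K,M)\times\N_\calS(L,M)$ together with co-Yoneda for the covariant functors $D^i$ to collapse the inner coend to $D^1M\times D^2M$, obtaining $(W\star D^1)\times(W\star D^2)\cong\int^{M}W(M)\cdot(D^1M\times D^2M)=W\star(D^1\times D^2)$.

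Combining the two parts shows that $W\star(-)$ preserves finite products for every $W\in\Phi_{\N_\calS}$, whence, by the reduction above, each $\V^\calS(J,-)$ preserves $\Phi_{\N_\calS}$-colimits, so $\N_\calS\hookrightarrow\V^\calS$ is eleutheric. The main obstacle is the binary-products computation of the previous paragraph: one must carry the coend bookkeeping through carefully and check that each distributivity isomorphism genuinely relies on $\tensor$ being the cartesian product — a general symmetric monoidal $\tensor$ does not distribute over the relevant copowers and products, which is exactly why the eleutheric property can fail for non-cartesian-closed $\V$ such as $\Top$. Everything else — the reduction to projections and the nullary case — is routine. (In the single-sorted case $\calS = \{\ast\}$ this recovers \cite[Example 7.5.5]{EAT}.)
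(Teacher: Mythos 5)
Your proof is correct and follows essentially the same route as the paper's: both reduce, via the pointwise formation of weighted colimits in $\V^\calS$ and the identification of $\V^\calS(J,-)$ with a finite product of projections (equivalently $V \mapsto V_{S_1}^{n_1} \times \ldots \times V_{S_m}^{n_m}$), to showing that the weighted-colimit functor $\V^\calS(j-,X) \ast (-) : [\N_\calS,\V] \to \V$ preserves conical finite products, which holds because the weight sends the finite coproducts of $\N_\calS$ to finite products and $\V$ is cartesian closed. The only difference is one of packaging: where you carry out the nullary and binary cases by hand with the coend calculus, the paper obtains the same key fact by citing \cite{Borceux_Kan} for the statement that the left Kan extension along the Yoneda embedding of a finite-product-preserving functor into a cartesian closed category preserves finite products.
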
  

\begin{proof}
Let $J = \left(n_{S_1}, \ldots, n_{S_m}\right) \in \ob\N_\calS$, let $X \in \ob\V^\calS$, and let us show that $\V^\calS(J, -) : \V^\calS \to \V$ preserves $\V^\calS(j-, X)$-weighted colimits. Writing $n_i := n_{S_i}$ ($1 \leq i \leq m$), from the definitions of $J$ and $\V^\calS$ we readily have
\begin{equation}
\label{eleuth_eq0}
\V^\calS(J, V) \cong V_{S_1}^{n_1} \times \ldots \times V_{S_m}^{n_m}
\end{equation}
$\V$-naturally in $V \in \V^\calS$.
Let $D : \N_\calS \to \V^\calS$ be a $\V$-functor, and let us write $D_S := \pi_S \circ D : \N_\calS \to \V$ for each $S \in \calS$, where $\pi_S : \V^\calS \to \V$ is the product projection $\V$-functor. We need to show that 
\begin{equation}
\label{eleuth_eq}
\V^\calS\left(J, \V^\calS(j-, X) \ast D\right) \cong \V^\calS(j-, X) \ast \V^\calS(J, D-)
\end{equation} 
in $\V$. Since weighted colimits are formed pointwise in $\V^\calS$ \cite[\S 3.3]{Kelly}, we have
\begin{equation}
\label{eleuth_eq1}
\V^\calS(j-, X) \ast D = \left(\V^\calS(j-, X) \ast D_S\right)_{S \in \calS}.
\end{equation} 
Now the inclusion $j : \N_\calS \hookrightarrow \V^\calS$ preserves (conical) finite coproducts by \ref{N_S_para}, so that $\V^\calS(j-, X) : \N_\calS^\op \to \V$ preserves (conical) finite products. The weighted-colimit-taking $\V$-functor \[ \V^\calS(j-, X) \ast (-) : \left[\N_\calS, \V\right] \to \V \] is the left Kan extension of $\V^\calS(j-, X) : \N_\calS^\op \to \V$ along the Yoneda embedding $\y : \N_\calS^\op \to \left[\N_\calS, \V\right]$ by \cite[(3.9) and (4.31)]{Kelly}. Because $\V$ is cartesian closed, we then deduce from \cite[Theorem 1.5, Proposition 2.1, and Example 3.1]{Borceux_Kan} that $\V^\calS(j-, X) \ast (-) : \left[\N_\calS, \V\right] \to \V$ preserves conical finite products. For each $1 \leq i \leq m$, writing $D_{S_i}^{n_i}$ for the conical $n_i^{\mathsf{th}}$-power of $D_{S_i}$ in the $\V$-functor $\V$-category $\left[\N_\calS, \V\right]$, we therefore have
\begin{equation}
\label{eleuth_eq2}
\V^\calS(j-, X) \ast D_{S_i}^{n_i} \cong \left(\V^\calS(j-, X) \ast D_{S_i}\right)^{n_i}.
\end{equation}
From \eqref{eleuth_eq0} we also readily obtain
\begin{equation}
\label{eleuth_eq3}
\V^\calS(J, D-) \cong D_{S_1}^{n_1} \times \ldots \times D_{S_m}^{n_m} : \N_\calS \to \V.
\end{equation} 
Using successively \eqref{eleuth_eq1}, \eqref{eleuth_eq0}, \eqref{eleuth_eq2}, the fact that $\V^\calS(j-, X) \ast (-) : \left[\N_\calS, \V\right] \to \V$ preserves conical finite products, and \eqref{eleuth_eq3}, we therefore have 
\begin{align*}
&\ \ \ \ \V^\calS\left(J, \V^\calS(j-, X) \ast D\right) \\	
&= \V^\calS\left(J, \left(\V^\calS(j-, X) \ast D_S\right)_{S \in \calS}\right) \\
&\cong \left(\V^\calS(j-, X) \ast D_{S_1}\right)^{n_1} \times \ldots \times \left(\V^\calS(j-, X) \ast D_{S_m}\right)^{n_m} \\
&\cong \left(\V^\calS(j-, X) \ast D_{S_1}^{n_1}\right) \times \ldots \times \left(\V^\calS(j-, X) \ast D_{S_m}^{n_m}\right) \\
&\cong \V^\calS(j-, X) \ast \left(D_{S_1}^{n_1} \times \ldots \times D_{S_m}^{n_m}\right) \\
&\cong \V^\calS(j-, X) \ast \V^\calS(J, D-), 
\end{align*} 
thus establishing the desired isomorphism \eqref{eleuth_eq}.       
\end{proof} 

\begin{para}
\label{Jary_para}
We say that a $\V$-monad $\T = (T, \eta, \mu)$ on $\V^\calS$ is \emph{$\N_\calS$-ary} \cite[Definition 4.1]{Pres} if the $\V$-endofunctor $T : \V^\calS \to \V^\calS$ preserves weighted $\Phi_{\N_\calS}$-colimits, or equivalently if it preserves ($\V$-enriched) left Kan extensions along $j : \N_\calS \hookrightarrow \V^\calS$. We say that a $\V$-monad $\T$ on $\V^\calS$ is \emph{$\N_\calS$-nervous} if it satisfies the (somewhat technical) conditions of \cite[Definition 4.9]{Struct}. If $(\V, \tensor, I) = (\V, \times, 1)$, so that $\V$ is cartesian closed, then the subcategory of arities $\N_\calS \hookrightarrow \V^\calS$ is eleutheric by Theorem \ref{pi_category_thm}, and we then deduce from \cite[Corollary 5.1.14]{Struct} that the $\N_\calS$-ary $\V$-monads on $\V^\calS$ coincide with the $\N_\calS$-nervous $\V$-monads on $\V^\calS$.
Given a $\V$-monad $\T$ on $\V^\calS$, we write $\T\Alg$ for the $\V$-category of $\T$-algebras, which may be regarded as a $\V$-category over $\V^\calS$ by way of the forgetful $\V$-functor $U^\T : \T\Alg \to \V^\calS$. 

Recall from the Introduction that an \emph{$\N_\calS$-theory} \cite[Definition 3.1]{Struct} is a $\V$-category $\mathfrak{T}$ equipped with an identity-on-objects $\V$-functor $\tau : \N_\calS^\op \to \mathfrak{T}$ satisfying the condition that each $\mathfrak{T}(J, \tau-) : \N_\calS^\op \to \V$ ($J \in \ob\N_\calS$) is a \emph{nerve} for the inclusion $j : \N_\calS \hookrightarrow \V^\calS$, meaning that $\mathfrak{T}(J, \tau-) \cong \V^\calS(j-, X)$ for some $X \in \ob\V^\calS$. We write $\Th_{\N_\calS}\left(\V^\calS\right)$ for the (ordinary) category of $\N_\calS$-theories. A \emph{(concrete) $\frakT$-algebra} is an object $A$ of $\V^\calS$ equipped with a $\V$-functor $M : \frakT \to \V$ satisfying $M \circ \tau = \V^\calS(j-, A) : \N_\calS^\op \to \V$. There is a $\V$-category $\frakT\Alg$ of $\frakT$-algebras and a forgetful $\V$-functor $U^\frakT : \frakT\Alg \to \V^\calS$ (see \cite[Definition 3.2]{Struct}).

Since the symmetric monoidal closed category $\V$ is topological over $\Set$, it is a \emph{locally bounded closed category} (in the sense of \cite[\S 6.1]{Kelly} and \cite[Definition 5.1]{locbd}) by \cite[Proposition 5.13(1)]{locbd}, and $\V^\calS$ is a \emph{$\V$-sketchable $\V$-category} in the sense of \cite[5.3.2]{Struct}. The small subcategory of arities $\N_\calS \hookrightarrow \V^\calS$ is therefore \emph{strongly amenable} (in the sense of \cite[Definition 3.12]{Struct}) by \cite[Theorem 5.3.12]{Struct}. We then deduce from \cite[Theorem 4.13]{Struct} that there is an equivalence $\Th_{\N_\calS}\left(\V^\calS\right) \simeq \Mnd_{\N_\calS}\left(\V^\calS\right)$ between the category of $\N_\calS$-theories and the category of $\N_\calS$-nervous $\V$-monads on $\V^\calS$, with the property that the $\V$-category of algebras for an $\N_\calS$-theory is isomorphic (over $\V^\calS$) to the $\V$-category of algebras for the corresponding $\N_\calS$-nervous $\V$-monad. 

The $\V$-category $\V^\calS$ is also a \emph{locally bounded $\V$-category} (in the sense of \cite[Definition 4.25]{locbd}) by \cite[Proposition 4.29]{locbd}. By \cite[Proposition 6.1.14]{Pres}, the small subcategory of arities $\N_\calS \hookrightarrow \V^\calS$ is therefore \emph{bounded} in the sense of \cite[Definition 6.1.12]{Pres}. If $(\V, \tensor, I) = (\V, \times, 1)$, so that $\V$ is cartesian closed, then the subcategory of arities $\N_\calS \hookrightarrow \V^\calS$ is also eleutheric by Theorem \ref{pi_category_thm}.  
\end{para} 

\begin{theo}
\label{mnd_pres_thm}
Suppose that $(\V, \tensor, I) = (\V, \times, 1)$, so that $\V$ is cartesian closed.
\begin{enumerate}[leftmargin=*]
\item For each $\V$-enriched $\calS$-sorted equational theory $\calT$, there is an $\N_\calS$-ary $\V$-monad $\T_\calT$ on $\V^\calS$ such that $\calT\Alg \cong \T_\calT\Alg$ in $\VCAT/\V^\calS$.

\item For each $\N_\calS$-ary $\V$-monad $\T$ on $\V^\calS$, there is a $\V$-enriched $\calS$-sorted equational theory $\calT_\T$ such that $\T\Alg \cong \calT_\T\Alg$ in $\VCAT/\V^\calS$.

\item For each $\N_\calS$-ary $\V$-monad $\T$ on $\V^\calS$, there is an $\N_\calS$-theory $\frakT_\T$ such that $\T\Alg \cong \frakT_\T\Alg$ in $\VCAT/\V^\calS$. For each $\N_\calS$-theory $\frakT$, there is an $\N_\calS$-ary $\V$-monad $\T_\frakT$ on $\V^\calS$ such that $\frakT\Alg \cong \T_\frakT\Alg$ in $\VCAT/\V^\calS$. 
\end{enumerate}
\end{theo}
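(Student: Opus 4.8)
The plan is to assemble Theorem \ref{mnd_pres_thm} from the equivalences and presentation-translations already established in this section, so that no genuinely new construction is required. For part (1): starting from a $\V$-enriched $\calS$-sorted equational theory $\calT$, apply Proposition \ref{equiv_presentations_prop1}(3) to obtain a diagrammatic $\N_\calS$-presentation $\scrT_\calT$ with $\calT\Alg \cong \scrT_\calT\Alg$ in $\VCAT/\V^\calS$. Since $\V$ is cartesian closed, Theorem \ref{pi_category_thm} gives that $\N_\calS \hookrightarrow \V^\calS$ is eleutheric, and by \ref{Jary_para} it is moreover bounded (via \cite[Proposition 6.1.14]{Pres}); hence the general theory of \cite{Pres} (in particular the fact that every diagrammatic $\N_\calS$-presentation presents an $\N_\calS$-ary $\V$-monad, with an isomorphism of algebra $\V$-categories over $\V^\calS$ — see \cite[\S 6]{Pres} and \cite[Remark 5.18]{EP}) yields an $\N_\calS$-ary $\V$-monad $\T_\calT$ with $\scrT_\calT\Alg \cong \T_\calT\Alg$ in $\VCAT/\V^\calS$. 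Composing the two isomorphisms gives $\calT\Alg \cong \T_\calT\Alg$, as required.

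For part (2): given an $\N_\calS$-ary $\V$-monad $\T$ on $\V^\calS$, first invoke \ref{Jary_para} (via \cite[Corollary 5.1.14]{Struct}) to see that $\T$ is also $\N_\calS$-nervous, and then invoke the equivalence $\Th_{\N_\calS}(\V^\calS) \simeq \Mnd_{\N_\calS}(\V^\calS)$ of \ref{Jary_para} together with the result from \cite{Pres} (or \cite{EP}) that every $\N_\calS$-ary (equivalently $\N_\calS$-nervous) $\V$-monad admits a diagrammatic $\N_\calS$-presentation $\scrT$ with $\scrT\Alg \cong \T\Alg$ in $\VCAT/\V^\calS$. Applying Proposition \ref{equiv_presentations_prop2}(3) to $\scrT$ produces a $\V$-enriched $\calS$-sorted equational theory $\calT_\T$ with $\calT_\T\Alg \cong \scrT\Alg$ in $\VCAT/\V^\calS$, and composing again gives $\T\Alg \cong \calT_\T\Alg$. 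For part (3): both assertions are a direct restatement of the equivalence $\Th_{\N_\calS}(\V^\calS) \simeq \Mnd_{\N_\calS}(\V^\calS)$ recalled in \ref{Jary_para} (using once more that $\N_\calS$-ary $=$ $\N_\calS$-nervous here), which carries with it the isomorphism, over $\V^\calS$, between the $\V$-category of algebras for an $\N_\calS$-theory and that for the corresponding $\N_\calS$-nervous $\V$-monad; one simply reads off $\frakT_\T$ and $\T_\frakT$ as the images of $\T$ and $\frakT$ under the two directions of this equivalence.

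The one point that requires care — and which I expect to be the main obstacle — is making sure that the ``presentation $\Rightarrow$ monad'' and ``monad $\Rightarrow$ presentation'' passages I am citing actually deliver an isomorphism of algebra $\V$-categories \emph{over} $\V^\calS$ (not merely an equivalence, and not merely on underlying ordinary categories), so that everything composes cleanly inside $\VCAT/\V^\calS$. This is where the hypotheses that $\N_\calS \hookrightarrow \V^\calS$ is eleutheric and bounded (Theorem \ref{pi_category_thm} and \ref{Jary_para}) do their work: under these hypotheses the relevant results of \cite{Pres, EP, Struct} are stated precisely as isomorphisms in $\VCAT/\V^\calS$. I would therefore spell out explicitly, before chaining the isomorphisms, that all three of $\N_\calS \hookrightarrow \V^\calS$ eleutheric, bounded, and strongly amenable hold in the present cartesian-closed-topological setting, citing \ref{Jary_para}, and only then perform the (now routine) composition of isomorphisms. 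No long calculation is needed; the content is entirely in correctly invoking and composing the cited equivalences.
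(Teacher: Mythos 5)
Your proposal is correct and follows essentially the same route as the paper's proof: reduce (1) and (2) to diagrammatic $\N_\calS$-presentations via Propositions \ref{equiv_presentations_prop1} and \ref{equiv_presentations_prop2}, then pass between presentations and $\N_\calS$-ary $\V$-monads using the boundedness and eleuthericity of $\N_\calS \hookrightarrow \V^\calS$ (the paper cites \cite[Theorem 5.20]{EP} and \cite[Proposition 5.24]{EP} for these two passages), and dispose of (3) by the equivalence $\Th_{\N_\calS}(\V^\calS) \simeq \Mnd_{\N_\calS}(\V^\calS)$ recalled in \ref{Jary_para}. Your concluding caution about the cited results delivering isomorphisms over $\V^\calS$ is exactly the point the paper's hypotheses are arranged to secure.
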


\begin{proof}
(3) immediately follows from the discussion in \ref{Jary_para}. For (1), there is by Proposition \ref{equiv_presentations_prop1} a diagrammatic $\N_\calS$-presentation $\scrT_\calT$ such that $\calT\Alg \cong \scrT_\calT\Alg$ in $\VCAT/\V^\calS$. Since the subcategory of arities $\N_\calS \hookrightarrow \V^\calS$ is bounded and eleutheric \eqref{Jary_para}, by \cite[Theorem 5.20]{EP} there is an $\N_\calS$-ary $\V$-monad $\T_\calT$ on $\V^\calS$ such that $\T_\calT\Alg \cong \scrT_\calT\Alg \cong \calT\Alg$ in $\VCAT/\V^\calS$, as desired. For (2), again because $\N_\calS$ is bounded and eleutheric, by \cite[Proposition 5.24]{EP} there is a diagrammatic $\N_\calS$-presentation $\scrT$ such that $\T\Alg \cong \scrT\Alg$ in $\VCAT/\V^\calS$. Then from Proposition \ref{equiv_presentations_prop2} we obtain a $\V$-enriched $\calS$-sorted equational theory $\calT_\T$ such that $\T\Alg \cong \scrT\Alg \cong \calT_\T\Alg$ in $\VCAT/\V^\calS$, as desired.
\end{proof}

\begin{rmk}
\label{amenable_rmk}
Results in forthcoming work \cite{Ax} will ultimately enable us to omit the assumption in Theorem \ref{mnd_pres_thm} that the symmetric monoidal structure of $\V$ is cartesian, provided that we replace $\N_\calS$-ary $\V$-monads in Theorem \ref{mnd_pres_thm} with the $\N_\calS$-nervous $\V$-monads of \ref{Jary_para} (which generally do not coincide with $\N_\calS$-ary $\V$-monads when the subcategory of arities $\N_\calS \hookrightarrow \V^\calS$ is not eleutheric). 
\end{rmk}

\section{Appendix: An inventory of some prominent topological categories over $\Set$}
\label{appendix}

In this Appendix, expanding on Example \ref{examples_ex}, we present a (far from exhaustive) list of examples of some prominent topological categories over $\Set$. We point out which examples are cartesian closed, since our strongest results apply to such examples (see \S\ref{sig_alg_subsection_cc} and \S\ref{cong_section}). For each example, we explicitly describe the final lifts of structured sinks (and structured surjections\footnote{By a \emph{structured surjection} in a concrete category $\V$ over $\Set$, we mean a surjective function $g : |V| \to S$ with $V \in \ob\V$ and $S$ a set. The final lift of a structured surjection is a quotient morphism \eqref{top_para}.}), as well as the complete lattice structure of the fibres, since these constructions feature prominently in some of our main results, such as Theorems \ref{explicit_free_fibre_prop_2} and \ref{free_alg_cc}. 

\begin{ex}[\textbf{Topological spaces}]
\label{top_ex}
Perhaps the canonical example of a topological category over $\Set$ is the category $\Top$ of topological spaces and continuous functions, equipped with the forgetful functor $|-| : \Top \to \Set$ that sends a topological space to its underlying set. It is well known that $\Top$ is not cartesian closed, but it is symmetric monoidal closed when equipped with the canonical symmetric monoidal closed structure of \ref{sym_mon_para}. 
\begin{itemize}[leftmargin=*]
\item \textbf{Final lifts of structured sinks:} A sink $(g_i : X_i \to X)_{i \in I}$ in $\Top$ is final iff $X$ has the \emph{final topology} induced by the $g_i$ ($i \in I$), meaning that a subset $U \subseteq |X|$ is open in $X$ iff $g_i^{-1}(U)$ is open in $X_i$ for all $i \in I$. The final lift of a structured sink $(g_i : |X_i| \to S)_{i \in I}$ equips the set $S$ with the final topology induced by the $g_i$ ($i \in I$).

\item \textbf{Complete lattice structure of fibres:} The fibre $\Fib(S) = \Fib_\Top(S)$ over a set $S$ can be identified with the partially ordered set of topologies on $S$, ordered by reverse inclusion. Given a family $\left(\calO_i\right)_{i \in I}$ of topologies on $S$, the infimum $\bigwedge_{i \in I} \calO_i$ in $\Fib(S)$ is the topology on $S$ generated by $\bigcup_{i \in I} \calO_i$, while the supremum $\bigvee_{i \in I} \calO_i$ in $\Fib(S)$ is the intersection $\bigcap_{i \in I} \calO_i$. 
\end{itemize} 
\end{ex}

\begin{ex}[\textbf{$\calC$-generated topological spaces}] 
\label{topc_ex}
The following example comes from \cite{EscardoCCC}. Let $\calC$ be a fixed class of topological spaces, called the \emph{generating spaces}. A topological space $X$ is \emph{$\calC$-generated} if $X$ has the final topology induced by all continuous functions into $X$ from spaces in $\calC$. We write $\Top_\calC \hookrightarrow \Top$ for the full subcategory of $\Top$ consisting of the $\calC$-generated spaces, which is (\emph{concretely}) coreflective. Given a topological space $X$, its $\calC$-generated coreflection $\calC X$ is the set $|X|$ equipped with the final topology induced by all continuous functions into $X$ from spaces in $\calC$, and the counit morphism is the identity function $\calC X \to X$. Since $\Top$ is topological over $\Set$, it follows from \cite[Theorem 21.33]{AHS} that $\Top_\calC$ is topological over $\Set$ when equipped with the restricted forgetful functor $|-| : \Top_\calC \to \Set$. When the class $\calC$ of generating spaces is \emph{productive} in the sense of \cite[Definition 3.5]{EscardoCCC}, the category $\Top_\calC$ is cartesian closed by \cite[Theorem 3.6]{EscardoCCC}. By \cite[Definition 3.3]{EscardoCCC}, examples of $\Top_\calC$ with $\calC$ productive include the categories of compactly generated spaces, core compactly generated spaces, locally compactly generated spaces, and sequentially generated spaces. 
\begin{itemize}[leftmargin=*]
\item \textbf{Final lifts of structured sinks:} The final lift of a structured sink $(g_i : |X_i| \to S)_{i \in I}$ in $\Top_\calC$ equips the set $S$ with the final topology induced by the $g_i$ ($i \in I$), which is $\calC$-generated because each $X_i$ ($i \in I$) is $\calC$-generated. 

\item \textbf{Complete lattice structure of fibres:} The fibre $\Fib(S) = \Fib_{\Top_\calC}(S)$ over a set $S$ can be identified with the partially ordered set of $\calC$-generated topologies on $S$, ordered by reverse inclusion. Given a family $\left(\calO_i\right)_{i \in I}$ of $\calC$-generated topologies on $S$, the infimum $\bigwedge_{i \in I} \calO_i$ in $\Fib(S)$ is the $\calC$-generated coreflection of the topology on $S$ generated by $\bigcup_{i \in I} \calO_i$, while the supremum $\bigvee_{i \in I} \calO_i$ in $\Fib(S)$ is the intersection $\bigcap_{i \in I} \calO_i$. 
\end{itemize} 
\end{ex}  

\begin{ex}[\textbf{Measurable spaces}]
\label{meas_ex}
The category $\Meas$ of measurable spaces and measurable functions is topological over $\Set$ when equipped with the forgetful functor $|-| : \Meas \to \Set$ that sends a measurable space to its underlying set (see e.g.~\cite[\S 2.1]{Sato}). We can describe final lifts of structured sinks and the complete lattice structure of fibres exactly as for $\Top$ in Example \ref{top_ex} above, by everywhere replacing ``topology'' with ``$\sigma$-algebra'' and ``open'' with ``measurable''. It is known that $\Meas$ is not cartesian closed, but it is symmetric monoidal closed when equipped with the canonical symmetric monoidal closed structure of \ref{sym_mon_para} (which is specifically unpacked for $\Meas$ in \cite[\S 2]{Sato}). \end{ex}

\begin{ex}[\textbf{Relational Horn theories without equality}] 
\label{rel_ex}
Given a \emph{relational Horn theory $\T$ without equality} (in the precise sense of \cite[Definition 3.5]{Extensivity}), the category $\T\Mod$ of $\T$-models and their morphisms is topological over $\Set$. To explain the details of this example (essentially using the notations and terminology of \cite{Monadsrelational, Extensivity}), a \emph{(finitary) relational signature} is a set $\Pi$ of \emph{relation symbols} equipped with an assignment to each relation symbol $R \in \Pi$ of a natural number \emph{arity} $\arity(R) \geq 0$.\footnote{One can also consider more general (infinitary) relational signatures, where the arities of relation symbols are permitted to be (possibly infinite) cardinal numbers; see e.g.~\cite{Rosickyconcrete}.} Given a set $S$, a \emph{$\Pi$-edge in $S$} is an element of the set-theoretic coproduct $\coprod_{R \in \Pi} S^{\arity(R)}$, and is thus a tuple $\left(R, \left(s_1, \ldots, s_{\arity(R)}\right)\right)$ consisting of a relation symbol $R \in \Pi$ and a tuple $\left(s_1, \ldots, s_{\arity(R)}\right) \in S^{\arity(R)}$. We now fix a regular cardinal $\lambda$ and a set $\Var$ of variables of cardinality $\lambda$. A \emph{$\Pi$-formula} is an expression of the form $R v_1 \ldots v_{\arity(R)}$ for a relation symbol $R \in \Pi$ and variables $v_1, \ldots, v_{\arity(R)} \in \Var$ (and thus can be identified with a $\Pi$-edge in the set $\Var$). Given a set $\sfE$ of $\Pi$-edges in a set $S$, a $\Pi$-formula $Rv_1 \ldots v_{\arity(R)}$, and a function $\kappa : \Var \to S$ (also called a \emph{valuation (in $S$)}), we say that $\sfE$ \emph{satisfies $Rv_1 \ldots v_{\arity(R)}$ with respect to $\kappa$} and write $(\sfE, \kappa) \models Rv_1 \ldots v_{\arity(R)}$ if $\sfE$ contains the $\Pi$-edge $\left(R, \left(\kappa(v_1), \ldots, \kappa\left(v_{\arity(R)}\right)\right)\right)$. 

A \emph{$\lambda$-ary Horn formula without equality over $\Pi$} is an expression of the form $\Phi \Longrightarrow \psi$, where $\Phi$ is a set of $\Pi$-formulas of cardinality $< \lambda$ and $\psi$ is a $\Pi$-formula. A set $\sfE$ of $\Pi$-edges in a set $S$ \emph{satisfies $\Phi \Longrightarrow \psi$} provided that, for each valuation $\kappa : \Var \to S$, we have $(\sfE, \kappa) \models \psi$ if $(\sfE, \kappa) \models \varphi$ for all $\varphi \in \Phi$. A \emph{$\lambda$-ary relational Horn theory without equality (over $\Pi$)} is a set $\T$ of $\lambda$-ary Horn formulas without equality over $\Pi$ (called the \emph{axioms} of $\T$). Given a set $S$, a \emph{$\T$-model structure on $S$} is a set $\sfE$ of $\Pi$-edges in $S$ that satisfies each axiom of $\T$, and a \emph{$\T$-model $X$} is a set $|X|$ equipped with a $\T$-model structure $\sfE(X)$ on $|X|$. For a $\T$-model $X$, we may also write $X \models Rx_1 \ldots x_{\arity(R)}$ to mean that $\sfE(X)$ contains the $\Pi$-edge $\left(R, \left(x_1, \ldots, x_{\arity(R)}\right)\right)$. Given $\T$-models $X$ and $Y$, a \emph{morphism of $\T$-models} $f : X \to Y$ is a function $f : |X| \to |Y|$ that sends $\Pi$-edges in $X$ to $\Pi$-edges in $Y$, i.e.~such that $X \models Rx_1 \ldots x_{\arity(R)}$ implies $Y \models Rf\left(x_1\right) \ldots f\left(x_{\arity(R)}\right)$. We write $\T\Mod$ for the category of $\T$-models and their morphisms, which is equipped with the forgetful functor $|-| : \T\Mod \to \Set$ that sends a $\T$-model to its underlying set. By a \emph{relational Horn theory without equality}, we mean a $\lambda$-ary relational Horn theory without equality for some regular cardinal $\lambda$.   

Given a relational Horn theory $\T$ without equality (over a relational signature $\Pi$), the category $\T\Mod$ is topological over $\Set$ (as was first shown by Rosick\'{y} \cite[Proposition 5.1]{Rosickyconcrete}). $\T\Mod$ is not cartesian closed in general, but it is cartesian closed under suitable conditions on $\T$ (see \cite[Theorem 6.15]{Exp_relational}). The canonical symmetric monoidal closed structure on $\T\Mod$ \eqref{sym_mon_para} was previously considered in \cite[Corollary 3.13]{Monadsrelational} (see also \cite[3.12]{Exp_relational}). 
\begin{itemize}[leftmargin=*]
\item \textbf{Final lifts of structured sinks:} A sink $(g_i : X_i \to X)_{i \in I}$ in $\T\Mod$ is final iff $X$ has the \emph{final $\T$-model structure} induced by the $g_i$ ($i \in I$), meaning that $\sfE(X)$ is the smallest $\T$-model structure on $|X|$ for which each $g_i : X_i \to X$ a morphism of $\T$-models. In particular, the final lift of a structured sink $(g_i : |X_i| \to S)_{i \in I}$ equips the set $S$ with the final $\T$-model structure induced by the $g_i$ ($i \in I$).

\item \textbf{Complete lattice structure of fibres:} The fibre $\Fib(S) = \Fib_{\T\Mod}(S)$ over a set $S$ can be identified with the partially ordered set of all $\T$-model structures on $S$, ordered by inclusion. Given a family $\left(\sfE_i\right)_{i \in I}$ of $\T$-model structures on $S$, the infimum $\bigwedge_{i \in I} \sfE_i$ in $\Fib(S)$ is the intersection $\bigcap_{i \in I} \sfE_i$, while the supremum $\bigvee_{i \in I} \sfE_i$ is the smallest $\T$-model structure on $S$ that contains each $\sfE_i$ ($i \in I$).  
\end{itemize} 
Three prominent examples of $\T\Mod$ for a relational Horn theory $\T$ without equality are the category $\mathsf{Rel}$ of sets equipped with a binary relation, the category $\Preord$ of preordered sets and monotone functions, and the category $\PMet$ of (extended) pseudo-metric spaces and non-expansive mappings. For a more comprehensive list of examples, see \cite[Example 3.5]{Monadsrelational} or \cite[Example 3.7]{Extensivity}.
\end{ex}

\begin{ex}[\textbf{Quasispaces, a.k.a.~concrete sheaves, on concrete sites}]
\label{quasisp_ex}
The following example originates from \cite{Dubucquasitopoi}: the category of \emph{quasispaces} (or \emph{concrete sheaves}) on a \emph{concrete site} is topological over $\Set$ (such a category is also known as a \emph{concrete quasitopos}). First, a category $\bbC$ (not necessarily small) is \emph{well-pointed} if it has a terminal object $1$ such that $\bbC(1, -) : \bbC \to \Set$ is faithful, thus exhibiting $\bbC$ as a concrete category over $\Set$. A \emph{concrete site} $(\bbC, \calJ)$ is then a well-pointed category $\bbC$ equipped with a Grothendieck topology $\calJ$ whose covering families are all \emph{jointly surjective}.\footnote{Given a concrete category $\V$ over $\Set$, we say that a sink $\left(g_i : V_i \to V\right)_{i \in I}$ in $\V$ is \emph{jointly surjective} if the underlying sink $\left(g_i : |V_i| \to |V|\right)_{i \in I}$ in $\Set$ is jointly surjective.} Given a concrete site $(\bbC, \calJ)$ and a set $S$, a \emph{plot (in $S$)}\footnote{This terminology comes from \cite{Baezsmooth}.} is a function $p : |C| \to S$ for some object $C$ of $\bbC$. A class $\calQ$ of plots in $S$ is \emph{admissible} if it satisfies the following conditions:
\begin{itemize}[leftmargin=*]
\item $\calQ$ contains all constant plots; i.e.~for each object $C$ of $\bbC$, every constant plot $|C| \to S$ belongs to $\calQ$. 
\item $\calQ$ is closed under precomposition with morphisms of $\bbC$; i.e.~for each morphism $h : C \to D$ of $\bbC$ and each plot $p : |D| \to S$ in $\calQ$, the plot $p \circ |h| : |C| \to S$ belongs to $\calQ$.  
\item $\calQ$ satisfies the \emph{sheaf condition} (or \emph{gluing condition}): for each object $C$ of $\bbC$ and each covering family $\left(h_j : C_j \to C\right)_{j \in J} \in \calJ(C)$, a plot $p : |C| \to S$ belongs to $\calQ$ if each plot $p \circ \left|h_j\right| : \left|C_j\right| \to S$ ($j \in J$) belongs to $\calQ$. 
\end{itemize}
A \emph{quasispace $X$} (\emph{on} $(\bbC, \calJ)$) is a set $|X|$ equipped with an admissible class of plots $\calQ_X$ in $|X|$. 
Given quasispaces $X$ and $Y$, a \emph{morphism of quasispaces} $f : X \to Y$ is a function $f : |X| \to |Y|$ that sends plots to plots, i.e.~for each object $C$ of $\bbC$ and each plot $p : |C| \to |X|$ in $\calQ_X$, the plot $f \circ p : |C| \to |Y|$ belongs to $\calQ_Y$. We write $\QuasiSp(\bbC, \calJ)$ for the category of quasispaces on $(\bbC, \calJ)$ and their morphisms, which is equipped with the forgetful functor $|-| : \QuasiSp(\bbC, \calJ) \to \Set$ that sends a quasispace to its underlying set. The concrete category $\QuasiSp(\bbC, \calJ)$ is topological over $\Set$ (see \cite[Theorem 1.7]{Dubucquasitopoi}), as we now recall and explain. Categories of quasispaces on concrete sites are always cartesian closed (even locally cartesian closed).
  
\begin{itemize}[leftmargin=*]
\item \textbf{Final lifts of structured sinks:} First, we have the following explicit characterizations of (jointly surjective) final sinks and quotient morphisms in $\QuasiSp(\bbC, \calJ)$.
\end{itemize}
\begin{prop}
\label{final_sinks_quasispaces}
Let $(\bbC, \calJ)$ be a concrete site.
\begin{enumerate}[leftmargin=*]
\item A sink $\left(g_i : X_i \to X\right)_{i \in I}$ in $\QuasiSp(\bbC, \calJ)$ is final iff $\calQ_X$ satisfies the following condition: a plot $p : |C| \to |X|$ ($C \in \ob\bbC$) belongs to $\calQ_X$ iff there is a covering family $\left(h_j : C_j \to C\right)_{j \in J} \in \calJ(C)$ such that for each $j \in J$, either $p \circ \left|h_j\right| : \left|C_j\right| \to |X|$ is constant, or there are some $i_j \in I$ and $p_{j} : \left|C_j\right| \to \left|X_{i_j}\right|$ in $\calQ_{X_{i_j}}$ such that $p \circ \left|h_j\right| = g_{i_j} \circ p_{j}$. The final lift of a structured sink $\left(g_i : \left|X_i\right| \to S\right)_{i \in I}$ equips the set $S$ with the admissible class of plots defined by this condition. 

\item A jointly surjective sink $\left(g_i : X_i \to X\right)_{i \in I}$ in $\QuasiSp(\bbC, \calJ)$ is final iff $\calQ_X$ satisfies the following condition: a plot $p : |C| \to |X|$ ($C \in \ob\bbC$) belongs to $\calQ_X$ iff there is a covering family $\left(h_j : C_j \to C\right)_{j \in J} \in \calJ(C)$ such that for each $j \in J$, there are some $i_j \in I$ and $p_{j} : \left|C_j\right| \to \left|X_{i_j}\right|$ in $\calQ_{X_{i_j}}$ such that $p \circ \left|h_j\right| = g_{i_j} \circ p_{j}$. The final lift of a jointly surjective structured sink $\left(g_i : \left|X_i\right| \to S\right)_{i \in I}$ equips the set $S$ with the admissible class of plots defined by this condition. 

\item In particular, a surjective morphism $g : X \to Y$ of $\QuasiSp(\bbC, \calJ)$ is final (i.e.~is a quotient morphism) iff $\calQ_Y$ satisfies the following condition: a plot $p : |C| \to |Y|$ ($C \in \ob\bbC$) belongs to $\calQ_Y$ iff there is a covering family $\left(h_j : C_j \to C\right)_{j \in J} \in \calJ(C)$ such that for each $j \in J$, there is some $p_{j} : \left|C_j\right| \to \left|X\right|$ in $\calQ_{X}$ such that $p \circ \left|h_j\right| = g \circ p_{j}$. The final lift of a structured surjection $g : |X| \to S$ equips the set $S$ with the admissible class of plots defined by this condition. 
\end{enumerate}
\end{prop}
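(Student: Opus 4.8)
The plan is to prove Proposition \ref{final_sinks_quasispaces} by first establishing part (1) directly from the definition of finality, and then deriving (2) and (3) as special cases. For part (1), I would begin by defining, for a given structured sink $\left(g_i : \left|X_i\right| \to S\right)_{i \in I}$, the class $\calQ$ of plots $p : |C| \to S$ satisfying the stated condition (the existence of a covering family $\left(h_j : C_j \to C\right)_{j \in J} \in \calJ(C)$ such that each $p \circ |h_j|$ is either constant or factors through some $g_{i_j}$ via a plot of $X_{i_j}$). First I would verify that $\calQ$ is an admissible class of plots: closure under constant plots is immediate (take the trivial covering family), closure under precomposition with morphisms $h : D \to C$ of $\bbC$ uses the stability of Grothendieck covering families under pullback (pull back the covering family along $h$ and precompose the factorizations accordingly), and the sheaf condition for $\calQ$ follows from the transitivity/local character axiom of the Grothendieck topology $\calJ$ (a covering family of covering families is a covering family). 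This gives a genuine quasispace structure $S_\calQ$ on $S$.

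Next I would show that each $g_i : X_i \to S_\calQ$ is a morphism of quasispaces — immediate, since for $p \in \calQ_{X_i}$ the plot $g_i \circ p$ satisfies the defining condition via the trivial covering family — and that the resulting sink $\left(g_i : X_i \to S_\calQ\right)_{i \in I}$ is final. For finality, I must check: given a quasispace $W$ and a function $g : |S_\calQ| \to |W|$ such that each $g \circ g_i : X_i \to W$ is a morphism, then $g : S_\calQ \to W$ is a morphism. So take $p \in \calQ_{S_\calQ} = \calQ$; by definition there is a covering family $\left(h_j\right)_{j \in J} \in \calJ(C)$ with each $p \circ |h_j|$ either constant (whence $g \circ p \circ |h_j|$ is constant, hence a plot of $W$) or of the form $g_{i_j} \circ p_j$ with $p_j \in \calQ_{X_{i_j}}$ (whence $g \circ p \circ |h_j| = (g \circ g_{i_j}) \circ p_j$ is a plot of $W$ since $g \circ g_{i_j}$ is a morphism). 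By the sheaf condition for $\calQ_W$, the plot $g \circ p$ belongs to $\calQ_W$. Conversely, any final lift must have exactly this class of plots: it must contain $\calQ$ (since finality forces admissibility constraints weaker than or equal to $\calQ$) and cannot contain more (else the identity map to $S_\calQ$ would violate finality); here I would invoke amnesticity of $\QuasiSp(\bbC, \calJ)$ over $\Set$ and the uniqueness of initial/final lifts to pin down $\calQ_X = \calQ$ for the final lift. The statement about a general final sink $\left(g_i : X_i \to X\right)_{i \in I}$ then follows by applying this to the underlying structured sink and using uniqueness.

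For part (2), I would observe that when the sink is jointly surjective, the ``constant'' alternative in the condition of (1) becomes redundant: any constant plot $p \circ |h_j| : |C_j| \to |X|$ has image a single point $x \in |X|$, and by joint surjectivity $x = g_{i}(y)$ for some $i$ and some $y \in |X_i|$; then $p \circ |h_j|$ equals $g_i \circ c_y$ where $c_y : |C_j| \to |X_i|$ is the constant plot at $y$, which lies in $\calQ_{X_i}$ by admissibility. Hence the two conditions coincide. Part (3) is then the case $I = \{*\}$ of part (2): a surjective morphism $g : X \to Y$ forms a (trivially) jointly surjective singleton sink, and a singleton final sink with epimorphic leg is precisely a quotient morphism \eqref{epi_coprod_para}, so the condition of (2) specializes directly to the stated condition on $\calQ_Y$.

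The main obstacle I anticipate is the careful verification that the candidate class $\calQ$ in part (1) is actually admissible — specifically the sheaf condition, which requires chasing a covering family $\left(k_l : D_l \to C\right)_{l \in L} \in \calJ(C)$ with each $p \circ |k_l| \in \calQ$ and, for each $l$, an associated covering family $\left(h^l_j : C^l_j \to D_l\right)_{j \in J_l}$ witnessing membership, and then assembling the composite family $\left(k_l \circ h^l_j\right)_{l, j}$ into a single covering family of $C$ via the transitivity axiom of $\calJ$; one must check that the constant-or-factoring alternative is preserved along these composites. The precomposition-closure argument using pullback-stability of covering families is similar but lighter. Everything else is a routine unwinding of the definitions of finality, quotient morphisms \eqref{top_para}, and the concrete structure of $\QuasiSp(\bbC, \calJ)$.
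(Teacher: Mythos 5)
Your proposal is correct and follows the same decomposition the paper uses: establish (1), then observe that joint surjectivity makes the ``constant'' alternative redundant (by factoring a constant plot at $x = g_i(y)$ through the constant plot at $y$, which is admissible in $X_i$), so that (2) and hence (3) follow. The paper's own proof of (1) is only a citation to Dubuc--Espa\~nol, and your direct verification --- admissibility of the candidate class $\calQ$ via stability and transitivity of $\calJ$, finality via the sheaf condition on the target, and the identification $\calQ_X = \calQ$ via the two identity morphisms between $X$ and $S_\calQ$ forced by finality and amnesticity --- is exactly the standard argument that citation contains, so there is nothing to flag beyond tightening the slightly loose phrasing of the uniqueness step.
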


\begin{proof}
A slight extension of the proof of \cite[Proposition 4.12]{Dubuc_espanol} yields (a), from which (b) follows (but see also \cite[Proposition 1.6]{Dubucquasitopoi}). 
\end{proof}

\begin{itemize}[leftmargin=*]
\item \textbf{Complete lattice structure of fibres:} The fibre $\Fib(S) = \Fib_{\QuasiSp(\bbC, \calJ)}(S)$ over a set $S$ can be identified with the partially ordered class of all admissible classes of plots in $S$, ordered by inclusion. Given a family $\left(\calQ_i\right)_{i \in I}$ of admissible classes of plots in $S$, the infimum $\bigwedge_{i \in I} \calQ_i$ in $\Fib(S)$ is the intersection $\bigcap_{i \in I} \calQ_i$, while the supremum $\bigvee_{i \in I} \calQ_i$ in $\Fib(S)$ is the smallest admissible class of plots in $S$ that contains the union $\bigcup_{i \in I} \calQ_i$. Suprema in $\Fib(S)$ have the following more explicit description obtained from Proposition \ref{final_sinks_quasispaces} (in view of \ref{fibre_para}).
\end{itemize}

\begin{prop}
\label{suprema_concrete_sheaf}
Let $\left(\calQ_i\right)_{i \in I}$ be a family of admissible classes of plots in a set $S$. When $I \neq \varnothing$, the supremum $\bigvee_{i \in I} \calQ_i$ in $\Fib(S)$ has the following explicit description: a plot $p : |C| \to S$ ($C \in \ob\bbC$) belongs to $\bigvee_{i \in I} \calQ_i$ iff there is a covering family $\left(h_j : C_j \to C\right)_{j \in J} \in \calJ(C)$ such that for each $j \in J$, there are some $i_j \in I$ and $p_{j} : \left|C_j\right| \to S$ in $\calQ_{i_j}$ such that $p \circ \left|h_j\right| = p_{j}$. 

When $I = \varnothing$, the bottom element $\bigvee \varnothing$ of $\Fib(S)$ (i.e.~the discrete quasispace on $S$) consists of precisely the \emph{locally constant} plots in $S$, i.e.~a plot $p : |C| \to S$ ($C \in \ob\bbC$) belongs to $\bigvee \varnothing$ iff there is a covering family $\left(h_j : C_j \to C\right)_{j \in J} \in \calJ(C)$ such that each $p \circ \left|h_j\right| : \left|C_j\right| \to S$ ($j \in J$) is constant. \qed  
\end{prop}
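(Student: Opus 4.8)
The plan is to derive this statement directly from Proposition \ref{final_sinks_quasispaces}(1), using the description of suprema in topological fibres recalled in \ref{fibre_para}. First I would recall from \ref{fibre_para} that, writing $X_i$ for the quasispace with underlying set $S$ and admissible class of plots $\calQ_i$, the supremum $\bigvee_{i \in I} \calQ_i$ in $\Fib(S)$ is by definition the codomain of the final lift of the structured sink of identity functions $\left(1_S : \left|X_i\right| = S \to S\right)_{i \in I}$. So it suffices to read off this final lift from Proposition \ref{final_sinks_quasispaces}(1): applying that result with each $g_i := 1_S$ (so that $g_{i_j} \circ p_j = p_j$), we obtain that a plot $p : |C| \to S$ ($C \in \ob\bbC$) belongs to $\bigvee_{i \in I} \calQ_i$ iff there is a covering family $\left(h_j : C_j \to C\right)_{j \in J} \in \calJ(C)$ such that for each $j \in J$, either $p \circ \left|h_j\right|$ is constant, or there are some $i_j \in I$ and $p_j \in \calQ_{i_j}$ with $p \circ \left|h_j\right| = p_j$. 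It then remains only to simplify this disjunctive condition in the two cases.

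When $I \neq \varnothing$, fix any $i_0 \in I$. Since $\calQ_{i_0}$ is admissible, it contains every constant plot; hence whenever $p \circ \left|h_j\right|$ is constant we may take $i_j := i_0$ and $p_j := p \circ \left|h_j\right| \in \calQ_{i_0}$, so the first disjunct is absorbed into the second. The condition from Proposition \ref{final_sinks_quasispaces}(1) thereby becomes verbatim the condition in the statement of Proposition \ref{suprema_concrete_sheaf}, and since that condition characterizes the final lift — which is $\bigvee_{i \in I} \calQ_i$ by \ref{fibre_para} — both implications follow at once.

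When $I = \varnothing$, the second disjunct is vacuous (there is no $i_j \in \varnothing$), so the condition reduces to: every $p \circ \left|h_j\right|$ is constant, i.e.\ $p$ is locally constant. Moreover $\bigvee \varnothing$ is the least element of $\Fib(S)$, which by \ref{fibre_para} is the discrete quasispace $\Disc S$, giving the final assertion. I do not anticipate a substantial obstacle here: the argument is essentially a specialization of Proposition \ref{final_sinks_quasispaces}(1), and the only points requiring any care are the absorption of the constant-plot disjunct (which is precisely where admissibility of the $\calQ_i$ is used) and the bookkeeping for the empty index set.
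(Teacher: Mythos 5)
Your proof is correct and follows the route the paper itself intends: the proposition is stated with no written proof, being ``obtained from Proposition \ref{final_sinks_quasispaces} (in view of \ref{fibre_para})'', and your argument supplies exactly the missing details (the supremum as the final lift of the identity sink, then reading off the condition). The only cosmetic difference is that for $I \neq \varnothing$ the identity sink is jointly surjective, so one could invoke part (2) of Proposition \ref{final_sinks_quasispaces} directly instead of absorbing the constant disjunct via admissibility, but your absorption step is equally valid and correctly identifies where admissibility of the $\calQ_i$ is used.
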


Prominent examples of categories of quasispaces on concrete sites (from \cite{Dubucquasitopoi}) include the categories of convergence spaces, subsequential spaces, bornological sets, pseudotopological spaces, and quasitopological spaces. Other examples (from \cite{Baezsmooth}) include the categories of diffeological spaces, Chen spaces, and (abstract) simplicial complexes, while \cite{Heunenprobability} provides the example of quasi-Borel spaces. Some further examples (such as the example of \emph{quantum sets}) are considered in \cite{Concrete_cats_recursion}, while the example of \emph{$C$-spaces} is studied in \cite{Escardo_Xu}.
\end{ex} 

\begin{ex}[\textbf{Abstract simplicial complexes}]
\label{simp_ex}
Although the category of (abstract) simplicial complexes can be presented as the category of quasispaces on a certain concrete site (see e.g.~\cite[Proposition 4.18]{Baezsmooth}), it is more usually presented as follows. 
Given a set $S$, a \emph{simplex (in $S$)} is a non-empty finite subset of $S$. A set $\mathscr{S}$ of simplices in $S$ is \emph{admissible} if it contains all singleton simplices and is downward closed (i.e.~if $s, s'$ are simplices in $S$ with $s' \subseteq s$ and $s \in \scrS$, then $s' \in \scrS$). A(n) \emph{(abstract) simplicial complex} $X$ is a set $|X|$ equipped with an admissible set of simplices $\scrS_X$ in $|X|$. Given simplicial complexes $X$ and $Y$, a \emph{morphism of simplicial complexes $f : X \to Y$} is a function $f : |X| \to |Y|$ that sends simplices to simplices, i.e.~for each $s \in \scrS_X$ we have $f[s] \in \scrS_Y$. We write $\Simp$ for the category of simplicial complexes and their morphisms, which is equipped with the forgetful functor $|-| : \Simp \to \Set$ that sends a simplicial complex to its underlying set. The category $\Simp$ is topological over $\Set$, and it is cartesian closed (even locally cartesian closed).
\begin{itemize}[leftmargin=*]  
\item \textbf{Final lifts of structured sinks:} A sink $(g_i : X_i \to X)_{i \in I}$ in $\Simp$ is final iff $\scrS_X$ satisfies the following condition: a simplex $s \subseteq |X|$ belongs to $\scrS_X$ iff $s$ is a singleton or $s \subseteq g_i\left[s_i\right]$ for some $i \in I$ and simplex $s_i \in \scrS_{X_i}$. 
The final lift of a structured sink $(g_i : |X_i| \to S)_{i \in I}$ equips the set $S$ with the admissible set of simplices defined by this condition. A jointly surjective sink $(g_i : X_i \to X)_{i \in I}$ in $\Simp$ is final iff $\scrS_X$ satisfies the following condition: a simplex $s \subseteq |X|$ belongs to $\scrS_X$ iff $s \subseteq g_i\left[s_i\right]$ for some $i \in I$ and simplex $s_i \in \scrS_{X_i}$. 
The final lift of a jointly surjective structured sink $(g_i : |X_i| \to S)_{i \in I}$ equips the set $S$ with the admissible set of simplices defined by this condition. In particular, a surjective morphism $g : X \to Y$ of $\Simp$ is final (i.e.~is a quotient morphism) iff $\scrS_Y$ satisfies the following condition: a simplex $s \subseteq |Y|$ belongs to $\scrS_Y$ iff $s \subseteq g\left[s'\right]$ for some simplex $s' \in \scrS_{X}$. 
The final lift of a structured surjection $g : |X| \to S$ equips the set $S$ with the admissible set of simplices defined by this condition. 

\item \textbf{Complete lattice structure of fibres:} The fibre $\Fib(S) = \Fib_{\Simp}(S)$ over a set $S$ can be identified with the partially ordered set of all admissible sets of simplices in $S$, ordered by inclusion. Given a family $\left(\scrS_i\right)_{i \in I}$ of admissible sets of simplices in $S$, the infimum $\bigwedge_{i \in I} \scrS_i$ in $\Fib(S)$ is the intersection $\bigcap_{i \in I} \scrS_i$. For $I \neq \varnothing$, the supremum $\bigvee_{i \in I} \scrS_i$ is the union $\bigcup_{i \in I} \scrS_i$, while the bottom element $\bigvee \varnothing$ of $\Fib(S)$ (i.e.~the discrete simplicial complex on $S$) consists of precisely the singleton simplices in $S$. 
\end{itemize}
\end{ex}

\begin{ex}[\textbf{Bornological sets}]
\label{born_ex}
Similarly, although the category of bornological sets can be presented as the category of quasispaces on a certain concrete site, it is more usually presented as follows. Given a set $S$, a \emph{bornology} on $S$ is a downward-closed set of subsets of $S$ that is closed under finite union and contains all singletons. A \emph{bornological set} $X$ is a set $|X|$ equipped with a bornology $\calB(X)$ on $|X|$; we refer to the elements of $\calB(X)$ as \emph{bounded subsets} of $X$. Given bornological sets $X$ and $Y$, a \emph{morphism of bornological sets $f : X \to Y$} is a function $f : |X| \to |Y|$ that sends bounded subsets to bounded subsets, i.e.~for each $U \in \calB(X)$ we have $f[U] \in \calB(Y)$. We write $\Born$ for the category of bornological sets and their morphisms, which is equipped with the forgetful functor $|-| : \Born \to \Set$ that sends a bornological set to its underlying set. The category $\Born$ is topological over $\Set$, and it is cartesian closed (even locally cartesian closed).
\begin{itemize}[leftmargin=*]
\item \textbf{Final lifts of structured sinks:} A sink $(g_i : X_i \to X)_{i \in I}$ in $\Born$ is final iff $\calB(X)$ satisfies the following condition: a subset $U \subseteq |X|$ belongs to $\calB(X)$ iff $U$ is a subset of a finite union of the $g_i\left[U_i\right]$ ($i \in I, U_i \in \calB\left(X_i\right)$) with a finite subset of $|X|$. 
The final lift of a structured sink $(g_i : |X_i| \to S)_{i \in I}$ equips the set $S$ with the bornology defined by this condition. In particular, a single morphism $g : X \to Y$ of $\Born$ is final iff $\calB(Y)$ satisfies the following condition: a subset $V \subseteq |Y|$ belongs to $\calB(Y)$ iff there are $U \in \calB(X)$ and a finite subset $\{y_1, \ldots, y_m\} \subseteq |Y|$ such that $V \subseteq g[U] \cup \{y_1, \ldots, y_m\}$ (because $\calB(X)$ is closed under finite union and unions are preserved under direct image). The final lift of a structured function $g : |X| \to S$ equips the set $S$ with the bornology defined by this condition. 

A jointly surjective sink $(g_i : X_i \to X)_{i \in I}$ in $\Born$ is final iff $\calB(X)$ satisfies the following condition: a subset $U \subseteq |X|$ belongs to $\calB(X)$ iff $U$ is a subset of a finite union of the $g_i\left[U_i\right]$ ($i \in I, U_i \in \calB\left(X_i\right)$). 
The final lift of a jointly surjective structured sink $(g_i : |X_i| \to S)_{i \in I}$ equips the set $S$ with the bornology defined by this condition. In particular, a surjective morphism $g : X \to Y$ of $\Born$ is final (i.e.~is a quotient morphism) iff $\calB(Y)$ satisfies the following condition: a subset $V \subseteq |Y|$ belongs to $\calB(Y)$ iff $V \subseteq g[U]$ for some $U \in \calB(X)$. 
The final lift of a structured surjection $g : |X| \to S$ equips the set $S$ with the bornology defined by this condition.  

\item \textbf{Complete lattice structure of fibres:} The fibre $\Fib(S) = \Fib_{\Born}(S)$ over a set $S$ can be identified with the partially ordered set of all bornologies on $S$, ordered by inclusion. Given a family $\left(\calB_i\right)_{i \in I}$ of bornologies on $S$, the infimum $\bigwedge_{i \in I} \calB_i$ in $\Fib(S)$ is the intersection $\bigcap_{i \in I} \calB_i$. For $I \neq \varnothing$, the supremum $\bigvee_{i \in I} \calB_i$ in $\Fib(S)$ is the bornology consisting of all subsets of finite unions of elements of $\bigcup_{i \in I} \calB_i$, while the bottom element $\bigvee \varnothing$ of $\Fib(S)$ (i.e.~the discrete bornological set on $S$) consists of precisely the finite subsets of $S$. 
\end{itemize}
\end{ex}

\bibliographystyle{amsalpha}
\bibliography{mybib}

\end{document}